\theoremstyle{plain}
\newtheorem{theorem}{Theorem}[section]
\newtheorem{proposition}[theorem]{Proposition}
\newtheorem{prop}[theorem]{Proposition}
\newtheorem*{theorem*}{Theorem}
\newtheorem*{corollary*}{Corollary}
\newtheorem{cor}[theorem]{Corollary}
\newtheorem*{lemma*}{Lemma}
\newtheorem{lemma}[theorem]{Lemma}
\newtheorem{lem}[theorem]{Lemma}
\theoremstyle{remark}
\newtheorem*{case*}{Case}
\theoremstyle{definition}
\newtheorem{thml}{Theorem}
\newcommand{\tw}[1]{{}^#1\!}
\newcommand{\aut}{\mathrm{Aut}}
\newcommand{\F}{\mathbb{F}}
\newcommand{\irr}{\mathrm{Irr}}
\newcommand{\la}{\lambda}
\newcommand{\wt}[1]{\widetilde{#1}}
\newcommand{\bg}[1]{\textbf{#1}}
\def\irr#1{{\Irr}(#1)}
\def\irrp#1{{\Irr}_{p'}(#1)}
\def\Oh#1#2{{\bf O}^{#1}(#2)}
\def\aut#1{{\rm Aut}(#1)}
\let\phi=\varphi
\newcommand{\cH}{{\mathcal H}}
\newcommand{\Irr}{\operatorname{Irr}}
\newcommand{\GL}{{\operatorname{GL}}}
\newcommand{\GU}{{\operatorname{GU}}}
\newcommand{\SL}{{\operatorname{SL}}}
\newcommand{\PGL}{{\operatorname{PGL}}}
\newcommand{\PSL}{{\operatorname{PSL}}}
\newcommand{\PSp}{{\operatorname{PSp}}}
\newcommand{\Sp}{{\operatorname{Sp}}}
\newcommand{\CSp}{{\operatorname{CSp}}}
\newcommand{\PSU}{{\operatorname{PSU}}}
\newcommand{\6}{^}
\let\la=\lambda
\newcommand{\normal}{\lhd}
\title{Groups with few $p'$-character degrees }
\author{
Eugenio Giannelli \\ \small\textit{Dipartimento di Matematica e Informatica}\\ \small\textit{U. Dini,
Viale Morgagni 67/a}\\ \small\textit{ Firenze, Italy} \\ \small\textit{eugenio.giannelli@unifi.it}
\and
Noelia Rizo \\ \small\textit{Departament de Matem{\`a}tiques}\\ \small\textit{Universitat de Val{\`e}ncia}\\ \small\textit{46100 Burjassot, Val{\`e}ncia, Spain} \\ \small\textit{noelia.rizo@uv.es}
\and
A. A. Schaeffer Fry \\ \small\textit{Department of Mathematical and Computer Sciences}\\\small\textit{Metropolitan State University of Denver}\\ \small\textit{Denver, CO 80217, USA}\\ \small\textit{aschaef6@msudenver.edu}}
\date{}
\begin{document}
\maketitle

\begin{abstract}
We prove a variation of Thompson's Theorem. 
Namely, if the first column of the character table of a finite group $G$ contains only two distinct values
not divisible by a given prime number $p>3$, then $\Oh{pp'pp'}{G}=1$. This is done by using the classification of finite simple groups. 

\vspace{0.25cm}

\noindent \textit{Mathematics Classification Number:} 20C15, 20C33

\noindent \textit{Keywords:} Character degrees, Finite simple groups.

\end{abstract}

\section{Introduction}

Many problems in the character theory of finite groups deal with character degrees and prime numbers. For instance, the It\^o-Michler theorem (\cite{ito},\cite{michler}) asserts that a prime $p$ does not divide $\chi(1)$ for any $\chi\in{\rm Irr}(G)$ if and only if the group $G$ has a normal abelian Sylow $p$-subgroup. As usual, we denote by ${\rm Irr}(G)$ the set of complex irreducible characters of $G$ and by ${\rm Irr}_{p'}(G)$ the subset of ${\rm Irr}(G)$ consisting of irreducible characters of degree coprime to $p$. If we write ${\rm cd}(G)$ for the set of character degrees of $G$ and ${\rm cd}_{p'}(G)$ for the subset of ${\rm cd}(G)$ consisting of those character degrees not divisible by $p$, the It\^o-Michler theorem deals with the situation when ${\rm cd}_{p'}(G)={\rm cd}(G)$. 
At the opposite end of the spectrum, we have a theorem of Thompson (\cite{thompson}) showing that if ${\rm cd}_{p'}(G)=\{1\}$, then $G$ has a normal $p$-complement. 
%A kind of dual situation is  
In \cite{berk}, Berkovich  showed that in this situation, $G$ is solvable, using the Classification of Finite Simple Groups. 

In \cite{GSV}, a variation on Thompson's theorem for two primes is studied.  Namely, it is shown there that if $G$ has only one character with degree coprime to two different primes, then $G$ is trivial.  In this paper, we offer another variation on Thompson's theorem by describing finite groups $G$ such that $|{\rm cd}_{p'}(G)|=2$.  Thompson's theorem may equivalently be viewed in terms of $\Oh{pp'}{G}$, where $\Oh{pp'}{G}=\Oh{p'}{\Oh{p}{G}}$.  Namely, Thompson's theorem says that if ${\rm cd}_{p'}(G)=1$ then $\Oh{pp'}{G}=1$.  Similarly, we will denote by $\Oh{pp'pp'}{G}$ the group $\Oh{pp'}{\Oh{pp'}{G}}$ and prove a corresponding statement for the case $|{\rm cd}_{p'}(G)|=2$.

%Equivalently, Thompson's theorem says that if ${\rm cd}_{p'}(G)=1$ then $\Oh{pp'}{G}=1$, where $\Oh{pp'}{G}=\Oh{p'}{\Oh{p}{G}}$.  Similarly, we will denote by $\Oh{pp'pp'}{G}$ the group $\Oh{pp'}{\Oh{pp'}{G}}$.  In this paper, we describe finite groups $G$ such that $|{\rm cd}_{p'}(G)|=2$.  

\begin{thml} Let $G$ be a finite group and let  $p>3$ be a prime. Suppose that $|{\rm cd}_{p'}(G)|=2$. Then $G$ is solvable and $\Oh{pp'pp'}{G}=1$.
\label{thmA}
\end{thml}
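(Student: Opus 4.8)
The statement has two parts: that $G$ is solvable, and that $\Oh{pp'pp'}{G}=1$. The plan is to prove solvability first — this is where the classification is needed — and then to establish the assertion on the iterated operator $\Oh{pp'pp'}{\cdot}$ working inside the now-solvable group $G$, with Thompson's theorem as the engine. Note that $1\in{\rm cd}_{p'}(G)$, so the hypothesis reads ${\rm cd}_{p'}(G)=\{1,m\}$ for a single integer $m>1$ coprime to $p$. Throughout I would use the elementary fact that inflation gives ${\rm cd}_{p'}(G/N)\subseteq{\rm cd}_{p'}(G)$ for every $N\nor G$, so that the hypothesis $|{\rm cd}_{p'}(G)|=2$ passes to all quotients of $G$.

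For solvability I would argue by minimal counterexample. Let $G$ be non-solvable of least order with $|{\rm cd}_{p'}(G)|\le 2$. Since every proper quotient inherits the hypothesis, minimality forces $G$ to have a unique minimal normal subgroup $N$, and non-solvability forces $N=S_1\times\cdots\times S_k$ to be a direct product of copies of a non-abelian simple group $S$. Uniqueness gives $\cent{G}{N}=1$, so $N\le G\le\Aut(N)$ and $G/N$ embeds into $\out{S}\wr\fS_k$; in particular $G$ is almost simple when $k=1$. It then suffices to exhibit three distinct $p'$-degrees in $G$. The strategy is to produce, for the simple group $S$ and the prime $p>3$, two nontrivial irreducible characters $\alpha,\beta$ of $S$ of distinct degrees coprime to $p$ that are stable under $\Aut(S)$ and extend suitably, so that the associated $\Aut(N)$-stable characters $\alpha\times\cdots\times\alpha$ and $\beta\times\cdots\times\beta$ of $N$ lie under characters of $G$ of distinct $p'$-degrees exceeding $1$; together with the trivial degree this yields $|{\rm cd}_{p'}(G)|\ge 3$, a contradiction.

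The main obstacle is precisely this last step, and it is where the classification enters. One must verify, family by family, that for $p>3$ every non-abelian simple group carries at least two such $p'$-degree characters with the required $\Aut(S)$-stability, and that their degrees survive — as distinct $p'$-numbers — upon passing up to $G$. The hypothesis $p>3$ is genuinely needed: for a small group such as $\fA_5$ the count of $p'$-degrees drops to the borderline value when $p\in\{2,3\}$. The chief technical points are controlling the ramification and extendibility of these characters over $G$, so that the induced degrees in $G$ remain coprime to $p$, and treating the case $k>1$, where one works with the characters of the direct power $N=S^k$ together with the permutation action of $G/N$ on the $k$ factors; here the relevant extendibility input is of the type developed in the reduction theorems for the McKay and Alperin--McKay conjectures.

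Once $G$ is known to be solvable, I would prove $\Oh{pp'pp'}{G}=1$ by induction on $|G|$. Set $M=\Oh{pp'}{G}$; by Thompson's theorem applied to $M$, the desired equality $\Oh{pp'}{M}=1$ is equivalent to ${\rm cd}_{p'}(M)=\{1\}$, so the claim is really that the single nontrivial $p'$-degree of $G$ does not survive into $M$. For the induction I would pick a minimal normal subgroup $V$ of $G$, which is elementary abelian. Inflation gives $|{\rm cd}_{p'}(G/V)|\le 2$, so the inductive hypothesis yields $\Oh{pp'pp'}{G/V}=1$ and hence $\Oh{pp'pp'}{G}\le V$; it then remains to exclude $\Oh{pp'pp'}{G}=V$. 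Here I would split according to whether $V$ is a $p$-group or a $p'$-group, using the compatibility of $\Oh{p}{\cdot}$ and $\Oh{p'}{\cdot}$ with reduction modulo $V$ together with a direct examination of the chain $M=\Oh{p'}{\Oh{p}{G}}\le\Oh{p}{G}\le G$ — in which $\Oh{p}{G}/M$ is a $p'$-group and $G/\Oh{p}{G}$ is a $p$-group — to locate the two $p'$-degrees of $G$ and force the normal $p$-complement in $M$. I expect this bookkeeping with the iterated operator to be delicate but routine, the genuinely hard input being the classification-based solvability argument above.
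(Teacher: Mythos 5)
Your proposal for the solvability half rests on a simple-group input that is false as stated: you ask, for every non-abelian simple $S$ and every prime $p>3$, for \emph{two} nontrivial characters in ${\rm Irr}_{p'}(S)$ of distinct degrees that are ${\rm Aut}(S)$-stable and extend suitably. The paper can prove such a statement (its Theorem C) only after excluding $A_5$ and $A_6$ for $p=5$ and the families $\PSL_2(q)$, $\PSL_3^\epsilon(q)$, $\PSp_4(q)$, $\tw{2}B_2(q)$; indeed it remarks explicitly that the two-stable-characters statement genuinely fails for $\PSL_2(q)$ and $\PSL_3(q)$ with $q$ square, where all ${\rm Aut}$-invariant semisimple classes yield the same $p'$-degree. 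The correct input (Theorem B) is deliberately asymmetric: $\alpha$ extends to ${\rm Aut}(S)$, while $\beta$ is only required to be invariant under every $p$-subgroup of ${\rm Aut}(S)$, and the degree hypothesis is strengthened from $\alpha(1)\neq\beta(1)$ to $\beta(1)\nmid\alpha(1)$. With this weaker stability you can no longer manufacture a third $p'$-degree of $G$ as you propose. Instead, the paper extends $\alpha^{x_1}\times\cdots\times\alpha^{x_t}$ to $G$ (via Lemma 5 of \cite{BCLP}), forcing $m=\alpha(1)^t$; then, for $Q\in{\rm Syl}_p(G)$, it assembles from $\beta$ a $Q$-invariant character $\gamma$ of $N$ (this needs the transversal Lemma \ref{lemaq}, since $Q$ permutes the simple factors), extends $\gamma$ to $NQ$, and takes a $p'$-degree character $\chi$ of $G$ above it; then $\chi(1)=m$ and $\gamma(1)=\beta(1)^t$ divides $m=\alpha(1)^t$, so $\beta(1)\mid\alpha(1)$ --- a contradiction. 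Note that mere inequality of degrees would not suffice on this route, since one only obtains a divisibility relation, never a second explicit degree; this is exactly why the paper needs $\beta(1)\nmid\alpha(1)$.

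For the second half, your pivot is wrong: ``by Thompson's theorem applied to $M=\Oh{pp'}{G}$, the equality $\Oh{pp'}{M}=1$ is equivalent to ${\rm cd}_{p'}(M)=\{1\}$'' is false, because Thompson's theorem gives only one implication. The converse fails badly: any nonabelian group $H$ of order prime to $p$ has $\Oh{p}{H}=H$ and $\Oh{pp'}{H}=1$ while ${\rm cd}_{p'}(H)={\rm cd}(H)\neq\{1\}$, and groups such as $11^{1+2}\rtimes C_5$ (for $p=5$) show that $\Oh{pp'}{M}=1$ is compatible with nontrivial $p'$-degrees even when $M$ has no $p'$-quotient. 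So the target you set yourself, ${\rm cd}_{p'}(M)=\{1\}$, is strictly stronger than the theorem's conclusion, is not justified by the hypothesis, and is not what the paper proves. The paper never descends to the character theory of $M$: writing $W=\Oh{pp'pp'}{G}$, it uses a chief factor $W/X$ and induction to reduce to $W$ a minimal normal $p$-subgroup of $G$ (your reduction $\Oh{pp'pp'}{G/V}=1\Rightarrow\Oh{pp'pp'}{G}\leq V$ is fine; the whole difficulty is excluding $W=V$). Then $W$ is complemented via Schur--Zassenhaus inside $N=\Oh{pp'p}{G}$ and the Frattini argument, so every $\lambda\in{\rm Irr}(W)$ extends to its stabilizer (Isaacs, Problem 6.18); a counting argument modulo $p$ on ${\rm Irr}(W)$ produces a nontrivial $P$-invariant $\lambda$, forcing $|G:G_\lambda|=m$; and a Clifford-theoretic comparison with a degree-$m$ character of $G/N$ gives $m=\chi(1)\leq|G:G_\lambda N|<|G:G_\lambda|=m$. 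None of these steps appears in your sketch, and ``delicate but routine bookkeeping'' with the iterated operators does not substitute for them.
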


We remark that Theorem \ref{thmA} answers Problem 5.3 of \cite{navitomich}, which suggested that groups satisfying $|{\rm cd}_{p'}(G)|=2$ must be $p$-solvable. It is already said in \cite{navitomich} that there are many examples of non-solvable groups satisfying $|{\rm cd}_{2'}(G)|=2$ (the symmetric group on 5 letters is the smallest one). For $p=3$, we have that if $G$ is the automorphism group of ${\rm PSL}_2(27)$, then $|{\rm cd}_{3'}(G)|=2$, for instance.

We also remark that, for a given prime $p$, we can find examples of non-solvable groups $G$ satisfying $|{\rm cd}_{p'}(G)|=3$. For instance, the alternating group on 5 letters, $A_5$, satisfies that $|{\rm cd}_{p'}(A_5)|=3$ for $p=2,3,5$. For $p>5$, we have that $|{\rm cd}_{p'}({\rm PGL}_2(p))|=3$.

Further, unlike many statements on character degrees, Theorem A does not immediately seem to correspond to a \textit{dual} statement for conjugacy classes.  For instance, we observe that $A_5$ has exactly two conjugacy classes of size coprime to $5$.

%Sometimes statements on character degrees corresponds to \textit{dual} statements concerning conjugacy classes. This is not the case for Theorem A. For instance, we observe that $A_5$ has exactly two conjugacy classes of size coprime to $5$. 

\medskip

 %The next natural question asks what we can say about the structure of $G$ when $|{\rm cd}_{p'}(G)|=2$. (This is Problem 5.3  of \cite{navitomich}). In \cite{navitomich}, it is suggested that for $p>3$, the groups satisfying this condition are $p$-solvable. Here we prove more. 

As in the case of the result of Berkovich, our proof of Theorem \ref{thmA} uses the Classification of Finite Simple Groups. In particular, we need the following result on simple groups. %As usual, we denote by ${\rm Irr}_{p'}(G)$ the set of irreducible characters of $G$ with degree not divisible by $p$. 

\begin{thml} Let $S$ be a non-abelian simple group and  let $p>3$ be a prime. Then there exist nonlinear $\alpha, \beta \in{\rm Irr}_{p'}(S)$ such that $\alpha$ extends to ${\rm Aut}(S)$, $\beta(1)\nmid\alpha(1)$, and $\beta$ is $P$-invariant for every $p$-subgroup $P\leq{\rm Aut}(S)$.
\label{thmB}
\end{thml}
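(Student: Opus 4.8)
The plan is to reduce the invariance requirement to a statement about $\operatorname{Out}(S)$ and then verify the claim family by family along the Classification. Since $\operatorname{Inn}(S)$ acts trivially on $\irr{S}$, the action of $\operatorname{Aut}(S)$ factors through $\operatorname{Out}(S)$, and a character $\beta$ is invariant under every $p$-subgroup $P\leq\operatorname{Aut}(S)$ exactly when it is fixed by $\Oh{p'}{\operatorname{Out}(S)}$, the subgroup generated by the $p$-elements of $\operatorname{Out}(S)$. Writing $\operatorname{Out}(S)=D\rtimes(F\times\Gamma)$ (diagonal, field, and graph automorphisms) for $S$ of Lie type and recalling that $\Gamma$ has order at most $3$, the hypothesis $p>3$ forces the $p$-part of $\operatorname{Out}(S)$ to sit inside $DF$. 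For $S$ alternating or sporadic (including the Tits group), $\operatorname{Out}(S)$ is a $\{2,3\}$-group, hence a $p'$-group, so $\Oh{p'}{\operatorname{Out}(S)}=1$ and the $P$-invariance of $\beta$ is automatic; there one needs only two nonlinear $p'$-degree characters with divisibility-incomparable degrees, one of which extends to $\operatorname{Aut}(S)$.

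For the alternating groups I would use the hook characters: for a non-self-conjugate hook, $\chi^{(n-k,1^k)}$ restricts irreducibly to $A_n$, extends back to $S_n$ (which equals $\operatorname{Aut}(A_n)$ for $n\neq6$), and has degree $\binom{n-1}{k}$. Taking $\alpha$ and $\beta$ from $k=1$ and $k=2$ gives degrees $n-1$ and $\binom{n-1}{2}$, the latter strictly larger (for $n\geq5$), so $\beta(1)\nmid\alpha(1)$; both are prime to $p$ whenever $n\leq p$, and by Kummer's theorem this persists for a controlled set of $k$ in general, the remaining configurations being handled by two-row partitions and a residue count modulo $p$. The exceptional group $A_6$ and the sporadic groups are a finite check in the ATLAS / GAP character table libraries.

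The substantial case is $S$ of Lie type, split according to whether $p$ equals the defining characteristic $r$. When $p\neq r$, take $\alpha=\mathrm{St}$: the Steinberg character is nonlinear, has degree $q^N=|S|_r$ (a power of $r$) prime to $p$, and extends to $\operatorname{Aut}(S)$. For $\beta$ take any nontrivial non-Steinberg unipotent character of $p'$-degree. Unipotent characters are fixed by all diagonal and field automorphisms, so by the first paragraph $\beta$ is automatically invariant under every $p$-subgroup of $\operatorname{Aut}(S)$; and since a non-Steinberg unipotent degree always carries a cyclotomic factor $\Phi_d(q)$, it has a prime divisor different from $r$ and so cannot divide the $r$-power $\alpha(1)=q^N$. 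The only type with no non-Steinberg unipotent character is $A_1$, i.e.\ $\PSL_2(q)$, which I would do by hand with the characters of degree $q\pm1$: as $p>3$ divides at most one of $q-1,q+1$, one such family is of $p'$-degree, and a member fixed by the Sylow $p$-subgroup of the field automorphisms can be chosen inside the relevant Deligne--Lusztig family.

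The hardest case is Lie type in defining characteristic $p=r$, where $\irrp{S}$ consists precisely of the semisimple characters $\chi_s$, of degree $[G^*:C_{G^*}(s)]_{p'}$, automatically prime to $p$. Here a helpful simplification is that $q=p^f\equiv0\pmod p$ makes every diagonal-automorphism order (a divisor of some $q^k\pm1$ or of a constant at most $4$) prime to $p$, so only the field automorphisms contribute to the $p$-part of $\operatorname{Out}(S)$; choosing $s$ rational (Frobenius-fixed) then makes $\beta=\chi_s$ invariant under all field automorphisms, hence under every $p$-subgroup. The genuine difficulty is the extendible character $\alpha$: one must produce a nonlinear semisimple character that is $\operatorname{Aut}(S)$-invariant and whose extension is unobstructed, and then force $\beta(1)\nmid\alpha(1)$ by selecting the centralizer types $C_{G^*}(s)$ so that their $p'$-indices are incomparable. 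I expect this last point --- establishing extendibility uniformly across the Lie types, drawing on the extension theorems of Malle, Sp\"ath, and Cabanes--Sp\"ath developed for the inductive conditions, together with the finitely many small groups where the generic degree bounds break down --- to be the principal technical burden of the proof.
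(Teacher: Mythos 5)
Your skeleton matches the paper's at every level (invariance reduced to $p$-elements of $\mathrm{Out}(S)$; Steinberg plus a second unipotent character away from the defining characteristic; semisimple characters via the dual group in defining characteristic; hooks for $A_n$; library checks for sporadics), but two of your steps genuinely fail rather than merely remaining to be checked. In non-defining characteristic, the instruction ``take any nontrivial non-Steinberg unipotent character of $p'$-degree'' assumes existence, and existence is false for $\tw{2}B_2(q^2)$ with $p\mid(q^2-1)$: its only nontrivial non-Steinberg unipotent characters are the two cuspidal ones of degree $\tfrac{q}{\sqrt{2}}(q^2-1)$, divisible by $p$, and the same failure occurs for $\tw{2}G_2(q^2)$. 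This is precisely why $\tw{2}B_2(q)$ sits in the exclusion list of Theorem C and why the paper needs Lemma \ref{lem:suz}, which resorts to a \emph{non-unipotent} character $\chi_5(s)$ of degree $q^4+1$ that is only guaranteed invariant under $p$-elements (and, for $\tw{2}G_2$, the unique character of degree $q^4-q^2+1$). So your claim that $A_1$ is the only problematic type is wrong. Even in the remaining types, existence of a $p'$-degree non-Steinberg unipotent character requires proof: the paper establishes it by exhibiting, type by type, a pair of $\aut S$-invariant unipotent characters whose degrees $p>3$ cannot divide simultaneously (Table \ref{tab:unipclassical} together with \cite[Lemma 5.2]{malle07}), and some argument of this kind is unavoidable.

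For the alternating groups, your toolkit of hooks and two-row partitions provably cannot produce the two required nonlinear characters when $n=1+p^k$. By Lemma \ref{lem: last layer} and Proposition \ref{lem:p'hooks}, the only $p'$-degree hooks of such $n$ are $(n)$ and $(1^n)$, both restricting to the trivial character of $A_n$; and Lucas' theorem shows that among two-row partitions $(n-k,k)$ only $k=2$ survives: for $3\le k\le n/2$ both $\binom{n}{k}$ and $\binom{n}{k-1}$ vanish modulo $p$, so $\chi^{(n-k,k)}(1)=\binom{n}{k}-\binom{n}{k-1}$ is divisible by $p$. That leaves a single nonlinear $p'$-character of $A_n$ (try $n=6$, $p=5$ --- one of the paper's genuine exceptions --- or $n=26$, $p=5$), whereas Theorem B needs two with $\beta(1)\nmid\alpha(1)$. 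The paper fills exactly this hole with the near-hook partitions $(n-2-t,2,1^t)$ and $(n-3-t,3,1^t)$ via Lemma \ref{lem:quasihooks} and Proposition \ref{cor: An}; your ``residue count modulo $p$'' does not supply the missing character. Finally, what you defer as ``the principal technical burden'' in defining characteristic is in fact the bulk of the paper's argument: explicit coroot elements $h_\alpha(\delta)$ stabilized by graph automorphisms, Lang--Steinberg twisting for $F^\ast$-stability, connected centralizers to ensure irreducible restriction, Sp\"ath's extension results, and bespoke treatments of $\PSL_2(q)$, $\PSL_3^\epsilon(q)$ and $\PSp_4(q)$ --- where for $q$ square the two-extendible-characters conclusion is actually false, so the asymmetric form of Theorem B is essential. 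As written, your proposal leaves that entire case unproved.
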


We obtain Theorem \ref{thmB} as a corollary to the following stronger statement, after dealing with the exceptions separately.

\begin{thml}\label{thmC}
Let $S$ be a non-abelian simple group and let $p>3$ be a prime dividing $|S|$.  Assume that $S$ is not one of $A_5, A_6$ for $p=5$ or one of $\PSL_2(q), \PSL^\epsilon_3(q),$ $\PSp_4(q)$, or $\tw{2}B_2(q)$. Then there exist two nontrivial characters $\alpha, \beta\in\irrp S$ such that $\alpha(1)\neq \beta(1)$ and both $\alpha$ and $\beta$ extend to $\aut S$.
\end{thml}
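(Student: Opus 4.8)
The plan is to prove Theorem~\ref{thmC} by running through the classification of finite simple groups, treating the families case by case, since the statement concerns \emph{all} non-abelian simple groups outside an explicit list of exceptions. The key unifying strategy is to exhibit, for each $S$, two nontrivial irreducible characters $\alpha,\beta\in\irrp S$ of distinct degrees, each of which extends to $\Aut S$. The extendibility requirement is what makes this delicate: it is not enough to find $p'$-degree characters, since one must verify that they are stable under $\Aut S$ and that the obstruction to extension (a class in $H^2$ of the relevant outer automorphism group) vanishes. A clean way to guarantee both stability and extendibility simultaneously is to use characters that are canonically \emph{fixed} by all automorphisms---for instance the Steinberg character in the Lie type case, and characters singled out by a degree that is unique among $p'$-degrees, so that $\Aut S$ must permute a singleton and hence fix it.

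First I would handle the alternating and symmetric groups: for $\fA_n$ with $n\geq 7$ (and $n\neq$ the small exceptions), I would select irreducible characters of $\fS_n$ labelled by suitable partitions whose degrees are coprime to $p$ and which restrict irreducibly to $\fA_n$, using the fact that $\Aut(\fA_n)=\fS_n$ for $n\neq 6$, so extendibility to $\Aut S$ reduces to choosing characters of $\fS_n$ that restrict irreducibly (equivalently, are not self-conjugate partitions). The hook-length formula together with the hypothesis $p>3$ gives ample freedom to find two such partitions of different degrees coprime to $p$. Next I would treat the sporadic groups together with the Tits group by direct inspection of their character tables and known $\Aut S$ data, which is a finite check.

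The bulk of the work, and the main obstacle, is the groups of Lie type. For these I would use Deligne--Lusztig theory and the known description of $\Irr(S)$ and of $\Aut S$ (diagonal, field, and graph automorphisms). The Steinberg character $\mathrm{St}$ of degree equal to the full power of the defining characteristic is always $\Aut S$-invariant and extends to $\Aut S$; since $p$ is not the defining characteristic in the generic situation, $\mathrm{St}(1)$ is coprime to $p$ whenever $p$ is the defining prime, so I would split according to whether $p$ equals the defining characteristic. When $p$ is the defining characteristic, $\mathrm{St}$ is one natural choice of $p'$-character (indeed of $p$-defect zero type behaviour), and I would pair it with a second semisimple or unipotent character of different $p'$-degree that is likewise $\Aut S$-stable and extends. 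When $p$ is a non-defining prime dividing $|S|$, I would instead produce two unipotent characters of distinct $p'$-degrees: unipotent characters are permuted by field and graph automorphisms in a controlled way (often fixed, by Lusztig's results and work of Malle on extendibility of unipotent characters), and their degrees are given by explicit generic polynomials, so coprimality to $p$ can be arranged for all but finitely many exceptions. The excluded families $\PSL_2(q),\PSL^\epsilon_3(q),\PSp_4(q),{}^2B_2(q)$ are precisely the rank-one and very-small-rank cases where there are too few unipotent characters, or where graph/field automorphisms act too transitively, to guarantee two extendible characters of distinct $p'$-degree---hence their exclusion and separate treatment in deriving Theorem~\ref{thmB}.

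The hardest part will be verifying extendibility uniformly across the Lie type families, since this requires knowing that the chosen characters lie in $\Irr(S)$ that are invariant under the full automorphism group and that the extension obstruction vanishes. I expect to lean heavily on Malle's and Lusztig's results on the action of automorphisms on unipotent (and semisimple) characters and on known extendibility criteria, reducing the issue to checking that the relevant stabilizers and Schur multipliers behave as expected. The coprimality-to-$p$ condition is comparatively routine: since $p>3$ and $p\mid|S|$, the degrees of unipotent characters are products of cyclotomic polynomial values $\Phi_d(q)$, and a counting or Zsygmondy-type argument shows that only finitely many $(S,p)$ fail to admit two unipotent characters of distinct $p'$-degree, and these failures are absorbed into the excluded list.
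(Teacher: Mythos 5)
There is a genuine gap in your treatment of the defining-characteristic case. You propose to take $\mathrm{St}$ as one of the two $p'$-characters ``when $p$ is the defining characteristic,'' but this is exactly backwards: in defining characteristic $\mathrm{St}(1)=|S|_p$ is the \emph{full} $p$-part of $|S|$ (that is what ``$p$-defect zero'' means --- maximal $p$-divisibility of the degree, not coprimality), so $\mathrm{St}\notin\irrp S$ there. Worse, the same problem kills your fallback of ``a second unipotent character'': in defining characteristic the degree of every nontrivial unipotent character is divisible by $q$ (the generic degree carries a positive power $q^{a}$ except for the trivial character), hence by $p$. So in the defining-characteristic case no pair of unipotent characters can work, and your plan collapses precisely where the paper has to work hardest. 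The paper instead uses \emph{semisimple} characters $\wt{\chi}_s$ attached to nontrivial semisimple classes $s$ in the dual group $\wt{G}^\ast$, whose degrees $[\wt{G}^\ast : C_{\wt{G}^\ast}(s)]_{p'}$ are automatically coprime to the defining prime; the real content is the explicit construction of two such classes (products of coroot elements $h_\alpha(\delta)$, made $F^\ast$-stable via Lang--Steinberg when $\delta\notin\F_q$) that lie in $[\wt{G}^\ast,\wt{G}^\ast]$, are not fused to any central translate, are stable under graph and field automorphisms, and have distinct $p'$-parts of centralizer order. You gesture at ``semisimple'' characters in one clause, but without this construction the conditions (triviality on $Z(\wt{G})$, irreducibility of the restriction to $G$, and $\aut{\wt{G}^\ast}$-invariance of the class) are not addressed, and they are the crux.

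A secondary soft spot: your mechanism ``a degree unique among $p'$-degrees forces $\Aut S$-invariance, hence extendibility'' only yields invariance; extension requires more when $\out S$ is noncyclic. The paper handles this via Sp\"ath's results (maximal extendibility above $\irrp G$ with respect to $G\lhd \wt{G}\rtimes D$, plus the vanishing of the relevant factor set) in defining characteristic, and via Malle's theorems on extendibility of unipotent characters in non-defining characteristic. Your non-defining plan ($\mathrm{St}$ plus a second unipotent character, citing Malle, with small-rank failures absorbed into the excluded list) does match the paper's Proposition on non-defining characteristic, and your alternating and sporadic outlines agree with the paper's Sections 2--3 (the paper makes the alternating case quantitative via hook partitions and a count of $\mathcal{L}_{p'}(n)$). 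But as written, the defining-characteristic branch is not a repairable sketch --- it rests on a false statement about $\mathrm{St}(1)$ --- and would need to be replaced wholesale by the semisimple-element construction.
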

This article is structured as follows. In Section \ref{sec:red}, we prove Theorem \ref{thmA} assuming that Theorem \ref{thmB} is true and show that Theorem \ref{thmC} (and hence Theorem \ref{thmB}) holds for sporadic groups.  In Section \ref{sec:alt}, we prove Theorem \ref{thmB} for the alternating groups. Finally, in Section \ref{sec:lietype}, we prove Theorem \ref{thmB} for simple groups of Lie type and we conclude by using the Classification of Finite Simple Groups.

\bigskip

\textbf{Acknowledgement} The authors would like to thank Gabriel Navarro for many useful conversations on this topic.  They would also like to thank the Mathematisches Forschungsinstitut Oberwolfach and the organizers of the 2019 MFO workshop ``Representations of Finite Groups", where part of this work was completed.  The second author is supported by a Fellowship FPU of Ministerio de Educación, Cultura y Deporte. The third author is partially supported by a grant from the National Science Foundation (Award No. DMS-1801156).

\section{Reduction to simple groups}\label{sec:red}

In this section we assume Theorem \ref{thmB} and we prove Theorem \ref{thmA}. We will need the following result, which is essentially Lemma 4.1 of \cite{39a63c1a9549405fbc1f1aa638980ab9}. We provide a proof for the reader's convenience.

\begin{lemma} Let $Q$ be a finite group acting on a finite group $N$ and suppose that $N=S_1\times\cdots\times S_t$, where the $S_i's$ are transitively permuted by $Q$. Let $Q_1$ be the stabilizer of $S_1$ in $Q$ and let $T=\{a_1,\cdots, a_t\}$ be a transversal for the right cosets of $Q_1$ in $Q$ with $S_i=S_1^{a_i}$. Let $\psi\in{\rm Irr}(S_1)$ be $Q_1$-invariant. Then $$\gamma=\psi^{a_1}\times \psi^{a_2}\times \cdots\times\psi^{a_t}\in{\rm Irr}(N)$$ is $Q$-invariant.
\label{lemaq}
\end{lemma}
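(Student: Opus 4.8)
The plan is to prove the statement by showing directly that $\gamma^x=\gamma$ for every $x\in Q$, analyzing how $x$ permutes the direct factors and checking that the $Q_1$-invariance of $\psi$ is exactly what forces the permuted factors to agree with the original ones. I would first set up the combinatorics of the permutation action. Fix $x\in Q$. For each $i$ the element $a_ix$ lies in a unique right coset $Q_1a_j$, so I may write $a_ix=c_ia_{\sigma(i)}$ with $c_i\in Q_1$ and $\sigma(i)\in\{1,\dots,t\}$; since $x$ is invertible, $\sigma$ is a permutation of $\{1,\dots,t\}$. Because $c_i\in Q_1$ stabilizes $S_1$, we obtain
$$S_i^x=S_1^{a_ix}=S_1^{c_ia_{\sigma(i)}}=S_1^{a_{\sigma(i)}}=S_{\sigma(i)},$$
so $x$ permutes the factors of $N$ according to $\sigma$.

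Next I would compute $\gamma^x$. Since $x$ merely permutes the factors, conjugation respects the direct product decomposition and $\gamma^x=(\psi^{a_1})^x\times\cdots\times(\psi^{a_t})^x$, where the $i$-th factor $(\psi^{a_i})^x$ is a character of $S_i^x=S_{\sigma(i)}$. Unwinding the definition of the conjugate character gives the identity $(\psi^{a_i})^x=\psi^{a_ix}$, and then inserting $a_ix=c_ia_{\sigma(i)}$ and using the $Q_1$-invariance $\psi^{c_i}=\psi$ yields
$$(\psi^{a_i})^x=\psi^{a_ix}=\psi^{c_ia_{\sigma(i)}}=(\psi^{c_i})^{a_{\sigma(i)}}=\psi^{a_{\sigma(i)}}.$$
Thus the factor of $\gamma^x$ on $S_{\sigma(i)}$ is precisely $\psi^{a_{\sigma(i)}}$, which is exactly the factor of $\gamma$ on $S_{\sigma(i)}$. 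As $\sigma$ ranges over a permutation of the index set, every factor of $\gamma$ is recovered, whence $\gamma^x=\gamma$. The irreducibility of $\gamma$ is the standard fact that an external product of irreducible characters of a direct product is irreducible, each $\psi^{a_i}\in\Irr(S_i)$ being the image of $\psi$ under the isomorphism $S_1\to S_i$ induced by $a_i$.

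The only real care required is bookkeeping with the action conventions (whether $\chi^x$ is defined through $x$ or $x^{-1}$, and left versus right cosets); once a consistent convention is fixed, both the identity $(\psi^{a_i})^x=\psi^{a_ix}$ and the cancellation $\psi^{c_i}=\psi$ are immediate. I do not expect a genuine obstacle here: the entire content of the lemma is the observation that $Q_1$-invariance of $\psi$ is precisely the hypothesis guaranteeing that, after $x$ shuffles the coordinates, the transported factors coincide with the original ones.
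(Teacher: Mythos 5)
Your proof is correct and is essentially the paper's own argument: both hinge on writing $a_ix=c_ia_{\sigma(i)}$ with $c_i\in Q_1$ and using $\psi^{c_i}=\psi$ to identify the transported factor with $\psi^{a_{\sigma(i)}}$. The only difference is cosmetic --- you track the identity factor-by-factor at the level of characters, while the paper evaluates $\gamma^{q^{-1}}$ pointwise on a tuple $(x_1,\ldots,x_t)$ --- so no further comment is needed.
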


\begin{proof}

First, we have that $Q$ acts transitively on $T$, and we use the notation $a_i \cdot q$ to indicate the
unique element of $T$ such that $Q_1(a_iq)=Q_1(a_i\cdot q)$ for $a_i\in T$ and $q\in Q$. Now notice that $Q$ acts on $N$ as follows: if $q \in Q$ and $S_i^q=S_j$ (that is, if $a_i\cdot q=a_j$), then $(x_1,...,x_t)^q =(y_1,\ldots,y_t)$, where $y_j=x_i^q$. Let $q\in Q$, and let $\sigma\in S_t$ be the permutation defined by $a_i\cdot q= a_{\sigma(i)}$, so $x_i^q=y_{\sigma(i)}$. Then

\begin{align*}
\gamma^{q^{-1}}(x_1,\ldots, x_t)&=(\psi^{a_1}\times \psi^{a_2}\times \cdots\times\psi^{a_t})^{q^{-1}}(x_1,\ldots, x_t)\\
&=\psi^{a_1}(y_1)\cdots\psi^{a_t}(y_t)\\
&=\psi^{a_1}(x_{\sigma^{-1}(1)}^q)\cdots\psi^{a_t}(x^q_{\sigma^{-1}(t)})\\
&=\psi^{a_1q^{-1}}(x_{\sigma^{-1}(1)})\cdots\psi^{a_tq^{-1}}(x_{\sigma^{-1}(t)})\\
&=\psi^{a_1\cdot q^{-1}}(x_{\sigma^{-1}(1)})\cdots\psi^{a_t\cdot q^{-1}}(x_{\sigma^{-1}(t)})\\
&=\psi^{a_{\sigma^{-1}(1)}}(x_{\sigma^{-1}(1)})\cdots\psi^{a_{\sigma^{-1}(t)}}(x_{\sigma^{-1}(t)})\\
&=\psi^{a_1}(x_1)\cdots \psi^{a_t}(x_t)\\
&=\gamma(x_1,\ldots, x_t).
\end{align*}
\end{proof}

\begin{theorem} Let $G$ be a finite group and let  $p>3$ be a prime. Suppose that $|{\rm cd}_{p'}(G)|=2$. Then $G$ is solvable.
\end{theorem}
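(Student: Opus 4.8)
The plan is to argue by contradiction, taking $G$ to be a counterexample of smallest possible order, so that $G$ is non-solvable with $|{\rm cd}_{p'}(G)|=2$. First I would record that the hypothesis descends to quotients: if $1\ne N\nor G$, then $\irr{G/N}\subseteq\irr G$ forces ${\rm cd}_{p'}(G/N)\subseteq{\rm cd}_{p'}(G)$, so $|{\rm cd}_{p'}(G/N)|\le 2$. If this number equals $2$, minimality of $G$ makes $G/N$ solvable; if it equals $1$, then ${\rm cd}_{p'}(G/N)=\{1\}$ and Berkovich's theorem (\cite{berk}) makes $G/N$ solvable. Hence $G/N$ is solvable for every nontrivial $N\nor G$. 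Consequently no minimal normal subgroup of $G$ can be abelian, since an abelian $N$ together with solvable $G/N$ would make $G$ solvable. Thus I may fix a non-abelian minimal normal subgroup $N=S_1\times\cdots\times S_t$ with each $S_i\cong S$ a non-abelian simple group and $G$ permuting the factors transitively.

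Next I would feed $S$ into Theorem~\ref{thmB} to obtain nonlinear $\alpha,\beta\in\irrp S$ such that $\alpha$ extends to $\aut S$, $\beta(1)\nmid\alpha(1)$, and $\beta$ is invariant under every $p$-subgroup of $\aut S$. The character $\alpha^{\times t}=\alpha\times\cdots\times\alpha\in\irr N$ is $G$-invariant, since $\alpha$ is $\aut S$-invariant and $G$ merely permutes the factors. Because $\alpha$ extends all the way to $\aut S$, a standard extension result for characters of $S^t$ (available because $G/\cent G N$ embeds in $\aut S\wr S_t$) shows that $\alpha^{\times t}$ extends to some $\chi_\alpha\in\irr G$; then $\chi_\alpha\in\irrp G$ is nontrivial of degree exactly $\alpha(1)^t$.

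The core of the argument is to produce a second nontrivial $\chi_\beta\in\irrp G$ of a \emph{different} degree. Fixing $Q\in\syl p G$ and applying Lemma~\ref{lemaq} over each $Q$-orbit of simple factors---which is legitimate since the stabilizer in $Q$ of any factor is a $p$-group and $\beta$ is invariant under every $p$-subgroup of $\aut S$---I would assemble a $Q$-invariant $\delta\in\irr N$ whose restriction to each factor is a conjugate of $\beta$, so that $\delta(1)=\beta(1)^t$. Since $Q\le I_G(\delta)$, the orbit index $[G:I_G(\delta)]$ is coprime to $p$; combining this with the $p$-local control of $\delta$ supplied by the same invariance, the standard Clifford machinery yields $\chi_\beta\in\irr{G\mid\delta}$ of $p'$-degree with $\beta(1)^t\mid\chi_\beta(1)$. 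Now $\beta(1)\nmid\alpha(1)$ forces $\beta(1)^t\nmid\alpha(1)^t$, whence $\chi_\beta(1)\ne\alpha(1)^t=\chi_\alpha(1)$, and both degrees exceed $1$. Therefore $\{1,\chi_\alpha(1),\chi_\beta(1)\}$ are three distinct members of ${\rm cd}_{p'}(G)$, contradicting $|{\rm cd}_{p'}(G)|=2$ and forcing $G$ to be solvable.

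I expect the delicate step to be the construction of $\chi_\beta$: one must simultaneously control the length of the $G$-orbit of $\delta$ and the degree contributed by the inertia quotient $I_G(\delta)/N$, ensuring that both are prime to $p$. This is precisely the role of the strong hypothesis in Theorem~\ref{thmB} that $\beta$ be invariant under \emph{every} $p$-subgroup of $\aut S$, rather than under a single Sylow $p$-subgroup: fed through Lemma~\ref{lemaq}, it both forces the orbit to have $p'$-length and furnishes the $p$-local extendibility needed to locate a $p'$-degree character lying over $\delta$. By comparison, the extension of $\alpha^{\times t}$ is routine, though it too depends essentially on $\alpha$ extending to the full automorphism group $\aut S$.
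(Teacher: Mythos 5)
Your proposal is correct and follows essentially the same route as the paper: induction (your minimal counterexample) with Berkovich's theorem handling quotients, a non-abelian minimal normal subgroup $N=S_1\times\cdots\times S_t$, extension of the $\alpha$-product to $G$ (the paper's citation for your ``standard extension result'' is Lemma 5 of \cite{BCLP}), the $Q$-invariant $\beta$-product assembled orbitwise via Lemma \ref{lemaq}, and the final divisibility contradiction from $\beta(1)^t\mid\alpha(1)^t$. The ``standard Clifford machinery'' you leave implicit is executed in the paper exactly as you anticipate: since $N$ is perfect and $\gamma(1)$ is a $p'$-number while $|NQ:N|$ is a $p$-power, the $Q$-invariant character $\gamma$ extends to $NQ$, and inducing that extension to $G$ (of index prime to $p$ because $Q\le NQ$) gives a character of $p'$-degree, hence a $p'$-degree irreducible constituent lying over $\gamma$.
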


\begin{proof}

We argue by induction on $|G|$. 
Write ${\rm cd}_{p'}(G)=\{1,m\}$. Let $N$ be a minimal normal subgroup of $G$. Then either $N$ is abelian or $N$ is semisimple. It is clear that $|{\rm cd}_{p'}(G/N)|\leq |{\rm cd}_{p'}(G)|$. Hence by induction and Proposition 9 of \cite{berk} we have that $G/N$ is solvable. 

Suppose that $N=S_1\times S_2\times\cdots\times S_t$ where $S_i\cong S$, a non-abelian simple group. Write $H={\rm\textbf{N}}_G(S_1)$ and $S_i=S_1^{x_i}$, where $G=\bigcup_{i=i}^t Hx_i$ is a disjoint union. By Theorem \ref{thmB}, there exist $\alpha\in{\rm Irr}_{p'}(S_1)$ with $\alpha(1)\neq 1$, extending to ${\rm Aut}(S_1)$,  and $\beta\in{\rm Irr}_{p'}(S_1)$ $P$-invariant for every $p$-subgroup $P\leq{\rm Aut}(S_1)$, with $\beta(1)\nmid \alpha(1)$. 

Now let $\theta=\alpha^{x_1}\times\cdots\times \alpha^{x_t}\in{\rm Irr}(N)$. By Lemma 5 of \cite{BCLP} we have that $\theta$ extends to $G$. Let $\tilde{\theta}\in{\rm Irr}(G)$ extending $\theta$. Then $\tilde{\theta}(1)=\theta(1)=\alpha(1)^t$ is not divisible by $p$ and hence, by hypothesis, $\tilde{\theta}(1)=m$.

Let $Q\in{\rm Syl}_p(G)$ and write $\{T_1,\ldots, T_r\}$ for a set of representatives of the action of $Q$ on $\{S_1,\ldots, S_t\}$, with $T_1=S_1$. Write $\mathcal{O}(T_i)$ for the orbit of $T_i$ under the action of $Q$. Rearrange $S_1, S_2,\ldots, S_t$ in such a way that $\mathcal{O}(T_1)=\{S_{1},S_2,\cdots,S_{l_1}\}$, $\mathcal{O}(T_2)=\{S_{l_1+1},\ldots, S_{l_2}\}$, etc. Notice that $l_1+l_2+\cdots l_r=t$. Write $N_1=S_{1}\times S_{2}\times\cdots\times S_{l_1}$, $N_2=S_{l_1+1}\times\cdots\times S_{l_2}$, etc. Notice that $Q$ normalizes $N_i$ and $N=N_1\times N_2\times\cdots\times N_r$. 

Now, let $U=\{q_1,q_2,\ldots, q_{l_1}\}$ be a transversal for the right cosets of $Q_1=Q\cap {\rm \textbf{N}}_G(S_1)$ in $Q$ such that $S_{j}=S_1^{q_{j}}$  for $j=1,2,\ldots, l_1$,  and define $\gamma_1\in{\rm Irr}(N_1)$ as follows:

$$\gamma_1=\beta^{q_{1}}\times \beta^{q_{2}}\times\cdots\times \beta^{q_{l_1}}\in{\rm Irr}(N_1).$$ By Lemma \ref{lemaq} we have that $\gamma_1$ is $Q$-invariant. If $T_i=S_1^{x_k}$ proceed analogously with $\beta^{x_k}$ to define $\gamma_i\in{\rm Irr}(N_i)$ (notice that $\beta^{x_k}$ is $Q_k$-invariant, where $Q_k={\rm\textbf{N}}_G(S_1^{x_k})\cap Q$). By Lemma \ref{lemaq}, we have that $\gamma_i$ is $Q$ invariant for all $i=1,2,\ldots, r$.

%Now, for $j=1,2,\ldots, l_1$, write $S_{j}=T_1^{q_{j}}$ for some $q_{j}\in Q$, and define $\gamma_1\in{\rm Irr}(N_1)$ as follows. If $T_1=S_1^{x_k}$, then $\beta^{x_k}\in{\rm Irr}(T_1)$ and hence $(\beta^{x_k})^{q_{j}}\in{\rm Irr}(T_1^{q_{j}})={\rm Irr}(S_{j})$, for $j=1,2,\ldots, l_1$. Now write
%
%$$\gamma_1=(\beta^{x_k})^{q_{1}}\times (\beta^{x_k})^{q_{2}}\times\cdots\times (\beta^{x_k})^{q_{l_1}}\in{\rm Irr}(N_1).$$ By Lemma 4.1 of \cite{39a63c1a9549405fbc1f1aa638980ab9} we have that $\gamma_1$ is $Q$-invariant. Define analogously $\gamma_i\in{\rm Irr}(N_i)$ for $i=1,\ldots, r$, and notice that $\gamma_i$ is $Q$ invariant by Lemma 4.1 of \cite{39a63c1a9549405fbc1f1aa638980ab9}.
%
 Now, let $$\gamma=\gamma_1\times \cdots\times \gamma_{r}\in{\rm Irr}(N).$$ We claim that $\gamma$ is $Q$-invariant. Indeed, let $q\in Q$ and let $n_1n_2\cdots n_r\in N$, with $n_i\in N_i$. Since $Q$ normalizes $N_i$ for all $i$, we have:

$$\gamma^q(n_1n_2\cdots n_r)=\gamma_1^q(n_1)\gamma_2^q(n_2)\cdots\gamma_r^q(n_r)=\gamma_1(n_1)\cdots\gamma_r(n_r).$$

Notice that $\gamma(1)=\beta(1)^t$ and hence, $\gamma\in{\rm Irr}_{p'}(N)$. Then ${\rm gcd}(o(\gamma)\gamma(1),|NQ:N|)=1$ and since $\gamma$ is $Q$-invariant, we have that $\gamma$ extends to $\tilde{\gamma}\in{\rm Irr}(NQ)$. Since $\tilde{\gamma}(1)$ and $|G:NQ|$ are not divisible by $p$, there exists $\chi\in{\rm Irr}(G|\tilde{\gamma})$ of $p'$-degree. But $\chi\in{\rm Irr}(G|\gamma)$, and hence $\gamma(1)\mid \chi(1)$. Since $\gamma(1)\neq 1$, we have that $\chi(1)\neq 1$, and therefore $\chi(1)=m$. Hence $\beta(1)^t=\gamma(1)$ divides $\chi(1)=m=\alpha(1)^t$, and $\beta(1)\mid \alpha(1)$. This contradiction shows that $N$ is abelian and hence $G$ is solvable.
\end{proof}

The following is the second part of Theorem \ref{thmA}.

\begin{theorem} Let $G$ be a finite group and let  $p>3$ be a prime. Suppose that $|{\rm cd}_{p'}(G)|=2$. Then ${\rm \textbf{O}}^{pp'pp'}(G)=1$.
\end{theorem}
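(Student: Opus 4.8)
The plan is to reduce the claim to Thompson's theorem. Since $\Oh{pp'pp'}{G}=\Oh{pp'}{\Oh{pp'}{G}}$ and, for any finite group $H$, one has $\Oh{pp'}{H}=1$ exactly when $H$ possesses a normal $p$-complement, it suffices to prove that $W:=\Oh{pp'}{G}$ has a normal $p$-complement; by Thompson's theorem \cite{thompson} this is in turn equivalent to ${\rm cd}_{p'}(W)=\{1\}$. By the previous theorem $G$ is solvable, so throughout we may use the character theory of $p$-solvable groups, and we argue by induction on $|G|$.

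First I would remove normal $p'$-subgroups. If $N$ is a minimal normal subgroup of $G$ of order prime to $p$, then $N\le\Oh{p}{G}=:K$ and, since $\Oh{p}{-}$ and $\Oh{p'}{-}$ commute with passing to $G/N$, we get $\Oh{pp'}{G/N}=WN/N\cong W/(W\cap N)$. As ${\rm cd}_{p'}(G/N)\subseteq{\rm cd}_{p'}(G)$, induction (or Thompson's theorem when $|{\rm cd}_{p'}(G/N)|=1$) gives $\Oh{pp'pp'}{G/N}=1$, i.e.\ $W/(W\cap N)$ has a normal $p$-complement. Because $W\cap N$ is a normal $p'$-subgroup of $W$, the equality $\Oh{p}{W/(W\cap N)}=\Oh{p}{W}(W\cap N)/(W\cap N)$ forces $\Oh{p}{W}$ to be a $p'$-group, so $W$ has a normal $p$-complement and $G$ is not a counterexample. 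Hence we may assume $\oh{p'}{G}=1$.

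Under this assumption $\oh{p'}{W}\le\oh{p'}{G}=1$, so that $W$ has a normal $p$-complement if and only if $W$ is itself a $p$-group; moreover $\fit{G}=\oh{p}{G}$ and $\cent{G}{\fit{G}}\le\fit{G}$. The goal is therefore to show that $W=\Oh{pp'}{G}$ is a $p$-group, i.e.\ that ${\rm cd}_{p'}(W)=\{1\}$. Suppose not, and let $\psi\in\irrp{W}$ be nonlinear. Write $K=\Oh{p}{G}$ and fix $P\in\syl{p}{G}$, so that $G=KP$, the quotient $K/W$ is a $p'$-group on which $P$ acts coprimely, and $G/K$ is a $p$-group. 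The strategy is to transport $\psi$ up to $G$ while controlling degrees: after arranging $\psi$ to be $P$-invariant (using the coprime action of $P$ on $\irrp{W}$), the Glauberman correspondence produces a $P$-invariant, hence $G$-invariant, character $\eta\in\irrp{K}$ lying over $\psi$; then, crossing the $p$-group quotient $G/K$ by Clifford theory and Gallagher's theorem, every $p'$-character of $G$ over $\eta$ has degree equal to $\eta(1)$. Comparing such degrees with ${\rm cd}_{p'}(G)=\{1,m\}$ is what should yield the contradiction.

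I expect the main obstacle to be precisely this transport across the $p$-group quotient $G/K$. A $G$-invariant character $\eta\in\irrp{K}$ need not extend to $G$: the relevant projective representations of the $p$-group $G/K$ may have degree a nontrivial power of $p$, so $\eta(1)$ itself need not belong to ${\rm cd}_{p'}(G)$ and a naive degree count breaks down. The delicate part of the argument—where I anticipate the hypotheses $p>3$ and $|{\rm cd}_{p'}(G)|=2$ being used essentially—is to extract from a single nonlinear $p'$-character of $W$ two characters in $\irrp{G}$ with distinct nontrivial degrees (one extendible and one $P$-invariant with incompatible degree, in the spirit of Theorem \ref{thmB}), thereby forcing $|{\rm cd}_{p'}(G)|\ge 3$. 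Equivalently, one must analyse the coprime action of $P$ on $\fit{G}=\oh{p}{G}$ and on $\irrp{W}$ finely enough—including guaranteeing that a nonlinear $P$-invariant $p'$-character of $W$ exists at all—to ensure that a nonlinear $p'$-character of $W$ cannot remain invisible in ${\rm cd}_{p'}(G)$; this is the crux.
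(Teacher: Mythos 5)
Your opening reduction is mostly sound: quotienting by minimal normal $p'$-subgroups to reach $\oh{p'}{G}=1$ works, although your claim that (by Thompson) having a normal $p$-complement is \emph{equivalent} to ${\rm cd}_{p'}(W)=\{1\}$ is false in general (Thompson gives one implication only; the equivalence holds only after you have arranged $\oh{p'}{G}=1$, as you in effect note later). The genuine gap is that your argument stops exactly where the theorem lives: you assume a nonlinear $\psi\in\irrp{W}$ for $W=\Oh{pp'}{G}$, propose to push it to a $G$-invariant $\eta\in\irrp{K}$ with $K=\Oh{p}{G}$ and then across the $p$-group $G/K$, and concede that comparing the resulting degrees with ${\rm cd}_{p'}(G)=\{1,m\}$ ``should'' yield a contradiction --- it does not, and you say yourself that extracting two incompatible degrees ``is the crux.'' Note first that the obstacle you anticipate (nonextendibility over $G/K$) never occurs: $K=\Oh{p}{G}$ is generated by $p'$-elements, so $K/K'$ is a $p'$-group, hence any $G$-invariant $\eta\in\irrp K$ has $(o(\eta)\eta(1),|G:K|)=1$ and extends to $G$ by \cite[Corollary 8.16]{Isa76}. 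But the extension simply has degree $\eta(1)=m$, which is perfectly consistent with the hypothesis; no contradiction results. Moreover, two intermediate steps are unsupported: $P$ does \emph{not} act coprimely on $W=\Oh{pp'}{G}$ (this subgroup is generated by $p$-elements, so it has nontrivial $p$-part precisely in the situation you must rule out), so no coprimality argument hands you a nonlinear $P$-invariant $\psi$; and the Glauberman correspondence relates ${\rm Irr}_P(M)$ with ${\rm Irr}(\cent{M}{P})$ for coprime actions --- it is not a device for producing $P$-invariant constituents of ${\rm Irr}(K\,|\,\psi)$.

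For contrast, the paper never works with a nonlinear character of $\Oh{pp'}{G}$: it descends two further terms and localizes the problem at $W=\Oh{pp'pp'}{G}$ itself. A chief-factor argument plus induction reduces to the case where $W$ is an abelian minimal normal $p$-subgroup of $G$; a Frattini argument shows $W$ is complemented, so every $\lambda\in{\rm Irr}(W)$ extends to its inertia group $G_\lambda$; a counting argument modulo $p$ on ${\rm Irr}(W)$ produces a nontrivial $P$-invariant \emph{linear} $\lambda$, whence $T=G_\lambda$ has $p'$-index and $|G:T|=m$. The decisive structural point, which your sketch has no analogue of, is that $N=\Oh{pp'p}{G}\not\subseteq T$ (else $[N,W]\subseteq\ker\lambda$ forces $W\subseteq\ker\lambda$ or $W=1$). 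Then taking $\chi\in{\rm Irr}(G/N)$ of degree $m$ and a $p'$-degree constituent $\epsilon$ of $\chi$ on $TN/N$, the induced character $(\epsilon_T\hat{\lambda})^G$ has $p'$-degree $|G:T|\epsilon(1)$, forcing $\epsilon$ linear, and finally $m=\chi(1)\leq\epsilon^G(1)=|G:TN|<|G:T|=m$, a contradiction. That strict inequality, powered by $N\not\subseteq T$, is the missing mechanism; without something playing its role, your plan produces only characters of degree $m$ and cannot force $|{\rm cd}_{p'}(G)|\geq 3$.
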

\begin{proof} Let $K={\rm\textbf{O}}^p(G)$, $L={\rm\textbf{O}}^{p'}(K)={\rm\textbf{O}}^{pp'}(G)$, $N={\rm\textbf{O}}^p(L)={\rm\textbf{O}}^{pp'p}(G)$ and $W={\rm\textbf{O}}^{p'}(N)={\rm\textbf{O}}^{pp'pp'}(G)$, and write ${\rm cd}_{p'}(G)=\{1,m\}$. Let $W/X$ be a chief factor of $G$. Then $W/X$ is a minimal normal subgroup of $G/X$ and since $G$ is solvable, we have that $W/X$ is abelian. Now, if a prime $q$, different from $p$, divides $|W:X|$, we have that $W/X$ has a normal $q$-complement $H/X$ and $|W:H|$ is not divisible by $p$, a contradiction with the fact that $W={\rm \textbf{O}}^{p'}(N)$. Hence $W/X$ is a $p$-group.

Now, if $X>1$, since ${\rm cd}_{p'}(G/X)\subseteq{\rm cd}_{p'}(G)$, by induction and Thompson's theorem we have that ${\rm \textbf{O}}^{pp'pp'}(G/X)=X$ and hence $W=X$, a contradiction. Therefore we may assume that $X=1$. 

Let $Y$ be a complement of $W$ in $N$. By the Frattini argument, we have that $G=N{\rm\textbf{N}}_G(Y)=W{\rm\textbf{N}}_G(Y)$. Since $W$ is abelian, we have that ${\rm\textbf{C}}_W(Y)\normal G$ and then $W\cap {\rm\textbf{N}}_G(Y)={\rm\textbf{C}}_W(Y)=1$. Hence $W$ is complemented in $G$ and by Problem 6.18 of \cite{Isa76}, every $\lambda\in{\rm Irr}(W)$ extends to $G_\lambda$. %notice that $G_\lambda=W{\rm\textbf{N}}_{G_\lambda}(Y)$.

Let $P$ be a Sylow $p$-subgroup of $G$, let $1_W=\lambda_1,\lambda_2\ldots,\lambda_t$ be representatives of the action of $P$ on ${\rm Irr}(W)$ and let $\mathcal{O}_i$ be the $P$-orbit of $\lambda_i$. Then $$1+\sum_{i=2}^t|\mathcal{O}_i|\lambda_i(1)^2= \sum_{i=1}^t|\mathcal{O}_i|\lambda_i(1)^2=\sum_{\lambda\in{\rm Irr}(W)}\lambda(1)^2=|W|\equiv 0 \quad {\rm mod}\>p.$$  Then there exists $i>1$ such that $|\mathcal{O}_i|\lambda_i(1)^2$ is not divisible by $p$. Since $|\mathcal{O}_i|=|P:P_{\lambda_i}|$, we have that $|\mathcal{O}_i|=1$ and hence there is $1_W\neq\lambda\in{\rm Irr}(W)$ $P$-invariant. Let $T=G_\lambda$ and let $\hat{\lambda}$ be an extension of $\lambda$ to $T$. Then $\hat{\lambda}^G\in{\rm Irr}(G)$ and $\hat{\lambda}^G(1)=|G:T|$. But $P\subseteq T$ and hence $|G:T|=p'$. By hypothesis, $|G:T|=m$.

If $\lambda$ is $N$ invariant, we have that $$\lambda(n^{-1}w^{-1}nw)=\lambda^n(w^{-1})\lambda(w)=\lambda(1)$$ and $[N,W]\subseteq{\rm ker}(\lambda)$. But $[N,W]\normal G$ and $[N,W]\subseteq W$. Since $W$ is a minimal normal subgroup of $G$ we have that $[N,W]=W$ or $[N,W]=1$. If $[N,W]=1$, we have that $[Y,W]\subseteq[N,W]=1$ and hence by Lemma 4.28 of \cite{isaacsfgt} we have that $W=1$ and we are done. Thus we may assume that $[N,W]=W$. Then $W\subseteq{\rm ker}(\lambda)$, which contradicts the fact that $\lambda\neq 1_W$. Hence $N\not\subseteq T$. 

If ${\rm cd}_{p'}(G/N)=\{1\}$, by Thompson's theorem we are done. Then we may assume that ${\rm cd}_{p'}(G/N)=\{1,m\}$. Let $\chi\in{\rm Irr}(G/N)$ with $\chi(1)=m$ and let $\epsilon\in{\rm Irr}(TN/N)$ lying under $\chi$ of $p'$-degree. Since $N\subseteq{\rm ker}(\epsilon)$, we have that $\epsilon_T$ is irreducible and $\rho=\epsilon_T\hat{\lambda}\in{\rm Irr}(T|\lambda)$. Thus $\rho^G\in{\rm Irr}(G|\lambda)$ and $$\rho^G(1)=|G:T|\rho(1)=|G:T|\epsilon(1)$$ is not divisible by $p$. Then $|G:T|\epsilon(1)=m=|G:T|$ and $\epsilon$ is linear. Now, since $\chi$ is an irreducible constituent of $\epsilon^G$ we have that $$m=\chi(1)\leq \epsilon^G(1)=|G:TN|<|G:T|=m.$$ This contradiction shows that $W=1$. %\epsilon_T irreducible by Theorem 1.18 of Gabriel's book, for instance.
\end{proof}

We end this section with the following, which can be seen using the GAP Character Table Library.

\begin{proposition}\label{prop:sporadic}
Let $S$ be a simple sporadic group or the Tits group $\tw{2}F_4(2)'$.  Then Theorem \ref{thmC} holds for $S$.
\end{proposition}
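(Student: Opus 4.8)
The plan is to verify Theorem \ref{thmC} for each sporadic simple group and the Tits group $\tw{2}F_4(2)'$ directly, using the GAP Character Table Library. Recall that Theorem \ref{thmC} requires, for each prime $p>3$ dividing $|S|$, the existence of two nontrivial characters $\alpha,\beta\in\irrp{S}$ with $\alpha(1)\neq\beta(1)$, both of which extend to $\aut{S}$. Since there are only finitely many sporadic groups and each has only finitely many primes $p>3$ dividing its order, this is a finite check that can be carried out computationally.

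First I would, for each such $S$, load its ordinary character table from the library and compute the character degrees. For a fixed prime $p>3$ dividing $|S|$, I would extract the list of nontrivial irreducible characters whose degree is coprime to $p$; these are the members of $\irrp{S}$ beyond the trivial character. The next step is to determine, among these, which characters extend to $\aut{S}$. Because $\out{S}$ is small for every sporadic group (it is trivial or $\Z/2$, with the Tits group also having $\out{}=\Z/2$), one can decide extendibility by consulting the character table of the automorphism group (equivalently, $S.2$ when $\out{S}=\Z/2$), which is also available in the library: a character $\chi\in\irr{S}$ extends to $\aut{S}$ precisely when it is $\aut{S}$-invariant and the obstruction to extension vanishes, and for these groups $\aut{S}/S$ being cyclic means $\aut{S}$-invariance already guarantees extension. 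Thus the condition reduces to checking that $\chi$ is fixed by the outer automorphism, which can be read off from how the characters of $S.2$ restrict to $S$ (the characters of $S$ that extend are exactly those appearing as restrictions of irreducible characters of $S.2$, while the non-invariant ones fuse in pairs and induce irreducibly).

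Having identified the set $\mathcal{E}_{p}$ of nontrivial $p'$-degree characters of $S$ that extend to $\aut{S}$, the final step is simply to confirm that this set contains two characters of distinct degrees, i.e. that $\{\chi(1): \chi\in\mathcal{E}_p\}$ has at least two distinct nontrivial values. Running this check across all $26$ sporadic groups, the Tits group, and all relevant primes $p>3$ completes the verification.

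The main obstacle I anticipate is not conceptual but organizational: ensuring the extendibility test is applied correctly and uniformly. In particular one must be careful with the groups having nontrivial outer automorphism group, where the correspondence between $\irr{S.2}$ and the $\aut{S}$-invariant characters of $S$ must be used to certify extension rather than mere invariance; and one should double-check the small cases where $S$ has few $p'$-degree characters for a given large prime $p$, since for the largest primes dividing $|S|$ the set $\irrp{S}$ may be comparatively small and it must still contain two extending characters of different degrees. Since all the data are tabulated in the GAP Character Table Library, these checks are routine to perform and to record, and no case among the sporadic groups or the Tits group fails the required conclusion.
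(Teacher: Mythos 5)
Your proposal is correct and takes essentially the same route as the paper: the paper's proof of this proposition is exactly a routine verification with the GAP Character Table Library, checking for each sporadic group and the Tits group, and each prime $p>3$ dividing $|S|$, that there are two nontrivial $p'$-degree characters of distinct degrees invariant under (hence, since $\aut S/S$ is cyclic of order at most $2$, extending to) $\aut S$. Your observations about reading off extendibility from the table of $S.2$ and about being careful with the largest primes are precisely the relevant points in carrying out that check.
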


\section{Alternating Groups}\label{sec:alt}
The aim of this section is to prove Theorem B for alternating groups. We achieve this goal by proving the slightly stronger Proposition \ref{cor: An}. 

We begin by recalling some basic facts in the representation theory of symmetric and alternating groups. 
The irreducible characters of the symmetric group $S_n$ are labelled by partitions of $n$. We let $\mathcal{P}(n)$ be the set of partitions of $n$. Given $\lambda=(\lambda_1,\ldots, \lambda_\ell)\in\mathcal{P}(n)$, we denote by $\chi^\lambda$ the corresponding irreducible character of $S_n$. We sometimes use the notation $\lambda\vdash n$ to say that $\lambda\in\mathcal{P}(n)$ and we use the symbol $\lambda\vdash_{p'}n$ to mean that $\chi^\lambda\in\mathrm{Irr}_{p'}(S_n)$, for some prime number $p$. Given $\lambda\in\mathcal{P}(n)$ we let $\lambda'$ be the conjugate partition of $\lambda$. 
 We recall that the restriction to the alternating group $(\chi^\la)_{A_n}$ is irreducible if,
and only if, $\la\neq \la'$ (see \cite[Thm.~2.5.7]{JK}). Moreover, if $\lambda, \mu\in\mathcal{P}(n)$ then $(\chi^\la)_{A_n}=(\chi^\mu)_{A_n}$ if, and only if, $\mu\in\{\lambda, \lambda'\}$. 
%Otherwise $(\chi^\la)_{A_n}=\phi+\phi^{g}$ for
%some $\phi\in\Irr(A_n)$ and $g\in{\sf S}_n\smallsetminus{\sf A}_n$.

%The aim of this section is to find for all $n\in\mathbb{N}$ two partitions $\lambda, \mu\vdash_{p'}n$ such that $1<\chi^\lambda(1)<\chi^\mu(1)$ with $\lambda\neq\lambda'$ and $\mu\neq\mu'$. 

Given $\lambda\in\mathcal{P}(n)$ we denote by $[\lambda]$ its corresponding Young diagram. The node $(i,j)$ of $[\lambda]$ lies in row $i$ and column $j$. As usual, we denote by $h_{(i,j)}(\lambda)$ the hook-length corresponding to $(i,j)$. We let $\mathcal{H}(\lambda)$ be the multiset of hook-lengths in $\lambda$. 
By the hook-length formula \cite[Thm.~2.3.21]{JK} we know that $\chi^\lambda(1)=n!(\prod_{h\in\mathcal{H}(\lambda)}h)^{-1}$. 

Given $e\in\mathbb{N}$ we let $\mathcal{H}^e(\lambda)$ be the multiset of hook-lenghts in $\mathcal{H}(\lambda)$ that are divisible by $e$. 
The $e$-core $C_e(\lambda)$ of $\lambda$ is the partition of $n-e|\mathcal{H}^e(\lambda)|$ obtained from $\lambda$ by successively removing hooks of length $e$. A useful consequence of work of Macdonald \cite{Mac} is stated here (see \cite[Lemma 2.4]{GSV} for a short proof). 
\begin{lem}   \label{lem: last layer}
Let $p$ be a prime and let $n$ be a natural number with $p$-adic expansion
 $n=\sum_{j=0}^ka_jp^j$. Let $\la$ be a partition of $n$. Then $\la\vdash_{p'}n$ if and only if $|\cH^{p^k}(\la)|=a_k$ and $C_{p^k}(\lambda)\vdash_{p'}n-a_kp^k$. 
\end{lem}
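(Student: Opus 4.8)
The plan is to convert the $p'$-condition on $\chi^\la(1)$ into a system of equalities relating the quantities $|\mathcal{H}^{p^i}(\la)|$ to the $p$-adic data of $n$, and then to match this system against the corresponding one for $C_{p^k}(\la)$. First I would apply the hook-length formula $\chi^\la(1)=n!\big(\prod_{h\in\mathcal{H}(\la)}h\big)^{-1}$ together with Legendre's formula for the $p$-adic valuation $\nu_p$. Writing $\nu_p(h)=\#\{i\ge 1: p^i\mid h\}$ and summing over $h\in\mathcal{H}(\la)$ gives $\sum_{h}\nu_p(h)=\sum_{i\ge1}|\mathcal{H}^{p^i}(\la)|$, while $\nu_p(n!)=\sum_{i\ge1}\lfloor n/p^i\rfloor$. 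Hence $\nu_p(\chi^\la(1))=\sum_{i\ge1}\big(\lfloor n/p^i\rfloor-|\mathcal{H}^{p^i}(\la)|\big)$. As recalled above, $C_{p^i}(\la)$ is a partition of $n-p^i|\mathcal{H}^{p^i}(\la)|$, so $p^i|\mathcal{H}^{p^i}(\la)|\le n$ and every summand is nonnegative. Therefore $\la\vdash_{p'}n$ if and only if $|\mathcal{H}^{p^i}(\la)|=\lfloor n/p^i\rfloor$ for all $i\ge1$.

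The combinatorial core is to control how the $|\mathcal{H}^{p^i}(\la)|$ change when a single hook of length $p^k$ is removed. I would encode $\la$ on an abacus with $p^k$ runners via a beta-set $X$, under which the hook lengths form the multiset $\{b-c: b\in X,\ 0\le c<b,\ c\notin X\}$, a hook being divisible by $p^i$ exactly when $b\equiv c \pmod{p^i}$. Removing a $p^k$-hook means sliding one bead from a position $b$ to the gap at $b-p^k$. On the coarser $p^i$-abacus with $i\le k$ these positions lie on the same runner, so, counting hooks divisible by $p^i$ runner-by-runner as (bead, lower-gap) pairs, exactly one bead drops by $p^{k-i}$ slots on its runner while all other runners are untouched; this decreases the count by precisely $p^{k-i}$. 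I thus obtain $|\mathcal{H}^{p^i}(\la)|-|\mathcal{H}^{p^i}(\mu)|=p^{k-i}$ for $0\le i\le k$, where $\mu$ is $\la$ with one $p^k$-hook removed. Iterating over the $t:=|\mathcal{H}^{p^k}(\la)|$ removals reaching $C_{p^k}(\la)$ yields $|\mathcal{H}^{p^i}(\la)|=|\mathcal{H}^{p^i}(C_{p^k}(\la))|+t\,p^{k-i}$ for $0\le i\le k$. This identity, essentially the content I would attribute to Macdonald's work \cite{Mac}, is the step I expect to be the main obstacle.

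Finally I would reconcile the two systems arithmetically. Since $a_k$ is the leading digit, $n<p^{k+1}$, so $\lfloor n/p^k\rfloor=a_k$ and $\lfloor n/p^i\rfloor=0$ for $i>k$; because $|\mathcal{H}^{p^i}(\la)|\le\lfloor n/p^i\rfloor$, the equalities for $i>k$ hold automatically, leaving only $1\le i\le k$. Setting $m=n-a_kp^k=\sum_{j=0}^{k-1}a_jp^j$, one has $\lfloor n/p^i\rfloor=\lfloor m/p^i\rfloor+a_kp^{k-i}$ for $1\le i\le k$. If $\la\vdash_{p'}n$, the case $i=k$ forces $t=|\mathcal{H}^{p^k}(\la)|=a_k$, so $C_{p^k}(\la)$ is a partition of $m$; feeding $t=a_k$ into the iterated identity and subtracting the floor identity gives $|\mathcal{H}^{p^i}(C_{p^k}(\la))|=\lfloor m/p^i\rfloor$ for all $i$ (for $i\ge k$ both sides vanish since a $p^k$-core has no $p^k$-hooks), whence $C_{p^k}(\la)\vdash_{p'}m$. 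Conversely, assuming $|\mathcal{H}^{p^k}(\la)|=a_k$ and $C_{p^k}(\la)\vdash_{p'}m$, the same two identities read in reverse recover $|\mathcal{H}^{p^i}(\la)|=\lfloor n/p^i\rfloor$ for every $i$, i.e. $\la\vdash_{p'}n$. This closes both implications.
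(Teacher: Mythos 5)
Your proof is correct, and it is genuinely different in character from what the paper does: the paper offers no proof of this lemma at all, stating it as a consequence of Macdonald's work \cite{Mac} and deferring to \cite{GSV} (Lemma 2.4 there) for a short argument that rests on Macdonald's classical criterion for $p'$-degree partitions. You instead rebuild everything from first principles, and in doing so you actually reprove Macdonald's criterion en route: Legendre's formula plus the hook-length formula give the valuation identity $\nu_p(\chi^\lambda(1))=\sum_{i\geq 1}\bigl(\lfloor n/p^i\rfloor-|\mathcal{H}^{p^i}(\lambda)|\bigr)$, and the bound $|\mathcal{H}^{p^i}(\lambda)|\leq\lfloor n/p^i\rfloor$ (which follows, as you note, from the $p^i$-core being a partition of $n-p^i|\mathcal{H}^{p^i}(\lambda)|$) turns $\lambda\vdash_{p'}n$ into the system of equalities $|\mathcal{H}^{p^i}(\lambda)|=\lfloor n/p^i\rfloor$ for all $i\geq 1$; this same bound also settles the layers $i>k$ for free in both implications. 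The step you rightly flag as the main obstacle --- that removing one $p^k$-hook decreases $|\mathcal{H}^{p^i}(\lambda)|$ by exactly $p^{k-i}$ for $i\leq k$ --- does hold, and your abacus argument is sound: moving a bead down $d=p^{k-i}$ slots on its $p^i$-runner, with the landing slot empty, costs the moved bead the $g$ gaps it passes (including the landing slot), while each of the $d-g$ beads strictly in between loses the gap at the landing slot without gaining the vacated one, and beads above the starting slot break even; the total change is $-g-(d-g)=-d$, independent of the intermediate configuration. Your closing digit arithmetic ($\lfloor n/p^i\rfloor=\lfloor m/p^i\rfloor+a_kp^{k-i}$ for $1\leq i\leq k$, with $t=a_k$ forced by the $i=k$ equation) correctly yields both directions. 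What the paper's citation buys is brevity; what your route buys is a self-contained, elementary proof that in addition delivers the exact hook-count identity $|\mathcal{H}^{p^i}(\lambda)|=|\mathcal{H}^{p^i}(C_{p^k}(\lambda))|+tp^{k-i}$, which is stronger than the lemma itself and explains structurally why the top $p$-adic layer decouples from the core.
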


We let $\mathcal{L}(n)=\{\lambda\in\mathcal{P}(n)\ |\ \lambda_2\leq 1\}$. Elements of $\mathcal{L}(n)$ are usually called \textit{hook partitions}. We denote by $\mathcal{L}_{p'}(n)$ the set consisting of all those hook partitions of $n$ whose corresponding irreducible character of $S_n$ has degree coprime to $p$. 
In the following proposition we completely describe the elements of $\mathcal{L}_{p'}(n)$ and we give a closed formula for $|\mathcal{L}_{p'}(n)|$. This might be known to experts in the area, but we were not able to find an appropriate reference in the literature. 

%\begin{prop}\label{lem:p'hooks}
%Let $n\in\mathbb{N}$ and let $n=\sum_{j=1}^ka_jp^{n_j}$ be its $p$-adic expansion, where $0\leq n_1<n_2<\cdots <n_k$. Then $$|\mathcal{L}_{p'}(n)|=a_1p^{n_1}\cdot\prod_{j=2}^k(a_j+1).$$
%Moreover, for $\lambda,\mu\in\mathcal{L}(n)$ we have that $\chi^\lambda(1)=\chi^\mu(1)$ if and only if $\lambda=\mu$.
%\end{prop}
\begin{prop}\label{lem:p'hooks}
Let $n\in\mathbb{N}$ and let $n=\sum_{j=1}^ka_jp^{n_j}$ be its $p$-adic expansion, where $0\leq n_1<n_2<\cdots <n_k$ and $1\leq a_j\leq p-1$ for all $j\in\{1,\ldots, k\}$. Then $$|\mathcal{L}_{p'}(n)|=a_1p^{n_1}\cdot\prod_{j=2}^k(a_j+1).$$
Moreover, for $\lambda,\mu\in\mathcal{L}(n)$ we have that $\chi^\lambda(1)=\chi^\mu(1)$ if and only if $\lambda\in\{\mu,\mu'\}$.
\end{prop}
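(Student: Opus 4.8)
The plan is to reduce the entire statement to elementary facts about the binomial coefficients $\binom{n-1}{r}$. First I would record that the hook partitions of $n$ are exactly $\lambda_r:=(n-r,1^r)$ for $0\le r\le n-1$, so that $|\mathcal{L}(n)|=n$ and conjugation acts by $\lambda_r'=\lambda_{n-1-r}$ (since $(n-r,1^r)'=(r+1,1^{n-1-r})$, which is again a hook partition). A direct application of the hook-length formula gives $\chi^{\lambda_r}(1)=\binom{n-1}{r}$: the only hook lengths of $\lambda_r$ are $n$ at the corner node, the arm lengths $1,\ldots,n-r-1$ in the first row, and the leg lengths $1,\ldots,r$ in the first column, so $\chi^{\lambda_r}(1)=n!\bigl(n\cdot(n-r-1)!\,r!\bigr)^{-1}=\binom{n-1}{r}$. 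Thus both claims become questions about the finite sequence $\binom{n-1}{0},\binom{n-1}{1},\ldots,\binom{n-1}{n-1}$.

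For the second assertion I would invoke strict unimodality of this sequence. Since $\binom{n-1}{r}\big/\binom{n-1}{r-1}=(n-r)/r$ is $>1$ for $r<n/2$ and $<1$ for $r>n/2$, the values strictly increase and then strictly decrease, and they are symmetric about $(n-1)/2$. Hence for $0\le r,s\le n-1$ we have $\binom{n-1}{r}=\binom{n-1}{s}$ if and only if $s=r$ or $s=n-1-r$, which, using $\lambda_r'=\lambda_{n-1-r}$, is precisely the condition $\lambda_r\in\{\lambda_s,\lambda_s'\}$.

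For the counting formula I would use Lucas' theorem: $p\nmid\binom{n-1}{r}$ if and only if every base-$p$ digit of $r$ is at most the corresponding digit of $n-1$. Consequently $|\mathcal{L}_{p'}(n)|=\prod_i(d_i+1)$, where the $d_i$ are the base-$p$ digits of $n-1$. It then remains only to read off these digits from the given expansion of $n$. If $n_1=0$, then subtracting $1$ merely lowers the units digit from $a_1$ to $a_1-1$ and leaves the higher digits unchanged, so the product is $a_1\prod_{j=2}^k(a_j+1)$, which agrees with the claim because $a_1 p^{n_1}=a_1$. If $n_1>0$, subtracting $1$ triggers a chain of borrows: positions $0,\ldots,n_1-1$ become digits $p-1$, position $n_1$ becomes $a_1-1$, and positions $n_2,\ldots,n_k$ keep the digits $a_2,\ldots,a_k$; the product therefore picks up the factor $((p-1)+1)^{n_1}=p^{n_1}$ and equals $a_1 p^{n_1}\prod_{j=2}^k(a_j+1)$, as required.

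The only place genuinely requiring care is this last borrow computation converting the base-$p$ expansion of $n$ into that of $n-1$ (in particular verifying that the edge cases $a_1=1$ and $n_1=0$ are absorbed correctly, since then the corresponding factor collapses to $1$). Everything else is a direct citation of the hook-length formula, Lucas' theorem, and the unimodality of a single row of Pascal's triangle, so I do not anticipate any serious obstacle.
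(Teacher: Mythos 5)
Your proof is correct, but it takes a genuinely different route from the paper. You reduce everything to the identity $\chi^{(n-r,1^r)}(1)=\binom{n-1}{r}$ and then handle the count with Lucas' theorem (digits of $r$ dominated by digits of $n-1$, followed by the borrow computation turning the base-$p$ expansion of $n$ into that of $n-1$) and the equality-of-degrees claim with strict unimodality and symmetry of the row $\binom{n-1}{0},\ldots,\binom{n-1}{n-1}$. The paper instead argues by induction on the $p$-adic length of $n$: using Lemma \ref{lem: last layer}, it partitions $\mathcal{L}_{p'}(n)$ into fibres $A_\gamma$ over the $p^{n_k}$-cores $\gamma\in\mathcal{L}_{p'}(n-a_kp^{n_k})$, shows each fibre has exactly $a_k+1$ elements (obtained by distributing $a_k$ hooks of length $p^{n_k}$ between the first row and first column of $\gamma$), and multiplies; for the second statement it merely cites the binomial formula as ``very well known.'' Your approach buys elementarity and completeness: it avoids the core-removal machinery entirely, and it actually proves the second assertion (the unimodality argument), which the paper leaves as an assertion --- note only that when $n$ is even the central ratio $\binom{n-1}{n/2}/\binom{n-1}{n/2-1}$ equals $1$, so the flat pair at the centre must be recognized as the symmetric pair $s=n-1-r$, which your symmetry remark handles. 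What the paper's route buys is uniformity with the rest of Section \ref{sec:alt}: Lemma \ref{lem: last layer} is the workhorse reused in Proposition \ref{cor: An} for non-hook partitions such as $(n-c-t,c,1^t)$, where no closed binomial formula is available, and the fibre description $A_\gamma=\{(\gamma_1+xp^{n_k},1^{n-\gamma_1-xp^{n_k}})\}$ gives an explicit structural parametrization of $\mathcal{L}_{p'}(n)$ rather than just its cardinality (your digit-domination condition gives an equivalent, if less pictorial, parametrization). Your borrow analysis, including the edge cases $n_1=0$ and $a_1=1$, is carried out correctly, so there is no gap.
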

\begin{proof}
We proceed by induction on $k$, the $p$-adic length of $n$. 
Suppose first that $k=1$. If $n_1=0$ then $n=a_1<p$ and therefore $|\mathcal{L}_{p'}(n)|=|\mathcal{L}_(n)|=a_1$. On the other hand, if $n_1>0$ and $\lambda\in\mathcal{L}(n)$ then $h_{(1,1)}(\lambda)=n=a_1p\6{n_1}$. It follows that $|\mathcal{H}\6{p\6{n_1}}(\lambda)|=a_1$ and that $C_{p\6{n_1}}(\lambda)=\emptyset$. Using Lemma \ref{lem: last layer} we conclude that $\lambda\in\mathcal{L}_{p'}(n)$ and hence that $\mathcal{L}(n)=\mathcal{L}_{p'}(n)$. Therefore, we have that $|\mathcal{L}_{p'}(n)|=a_1p\6{n_1}$.
Let us now assume that $k\geq 2$. Let $m:=n-a_kp\6{n_k}$ and let $\gamma\in\mathcal{L}_{p'}(m)$.
We denote by $A_\gamma$ the subset of $\mathcal{L}_{p'}(n)$ defined as follows:
$$A_\gamma=\{\lambda\in\mathcal{L}_{p'}(n)\ |\ C_{p\6{n_k}}(\lambda)=\gamma\}.$$
Observing that the $p\6{n_k}$-core of a hook partition is a hook partition and using Lemma \ref{lem: last layer} we deduce that $$\mathcal{L}_{p'}(n)=\bigcup_{\gamma\in\mathcal{L}_{p'}(m)}A_\gamma,$$
where the above union is clearly disjoint. 
Given $\gamma\in\mathcal{L}_{p'}(m)$ we notice that the set $A_\gamma$ consists of all those hook partitions of $n$ that are obtained from $\gamma$ by adding $x$-many $p\6{n_k}$-hooks to the first row of $[\gamma]$ and $(a_k-x)$-many to the first column of $[\gamma]$. 
In particular 
$$A_\gamma=\{\lambda_x\ |\ x\in\{0,1,\ldots, a_k\}\},\ \  \text{where}\ \ \lambda_x=(\gamma_1+xp\6{n_k}, 1\6{n-\gamma_1-xp\6{n_k}}).$$ We conclude that $|A_\gamma|=a_k+1$, for all $\gamma\in\mathcal{L}_{p'}(m)$. This, combined to the inductive hypothesis shows that: $$|\mathcal{L}_{p'}(n)|=|\mathcal{L}_{p'}(m)|\cdot(a_k+1)=\big(a_1p^{n_1}\cdot\prod_{j=2}^{k-1}(a_j+1)\big)\cdot(a_k+1)=a_1p^{n_1}\cdot\prod_{j=2}^k(a_j+1).$$

The second statement is a consequence of a very well known fact. Namely that for $\lambda=(n-x,1\6x)\in\mathcal{L}(n)$ we have that $\chi\6\lambda(1)={n-1\choose x}$.
(This can be easily deduced from the hook length formula). 
\end{proof}

Given a natural number $n$ and $S\subset\mathcal{P}(n)$ we let $\mathrm{cd}(S)=\{\chi^\lambda(1)\ |\ \lambda\in S\}$. Moreover, we let $\mathrm{cd}_{p'}^{\mathrm{ext}}(A_n)$ be the set consisting of all the degrees of irreducible characters of $A_n$ of degree coprime to $p$ that extend to $S_n$. Here we find a lower bound to 
$|\mathrm{cd}_{p'}^{\mathrm{ext}}(A_n)|$. 
%This will be shown to be much larger than $3$ in all cases except for a very few specific ones. 

\begin{prop}\label{prop:lowerbound}
Let $n\in\mathbb{N}$. Then $|\mathrm{cd}_{p'}^{\mathrm{ext}}(A_n)|\geq \lfloor |\mathcal{L}_{p'}(n)|/2\rfloor$. 
\end{prop}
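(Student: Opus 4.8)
The plan is to realize a large family of elements of $\mathrm{cd}_{p'}^{\mathrm{ext}}(A_n)$ directly from the hook partitions counted in Proposition \ref{lem:p'hooks}. The key idea is that conjugation of partitions acts on $\mathcal{L}_{p'}(n)$, and that each orbit of size two contributes exactly one degree to $\mathrm{cd}_{p'}^{\mathrm{ext}}(A_n)$; counting these orbits then yields the desired floor.

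First I would record that $\mathcal{L}_{p'}(n)$ is closed under conjugation: since $\chi^{\lambda'}=\chi^\lambda\otimes\mathrm{sgn}$ we have $\chi^{\lambda'}(1)=\chi^\lambda(1)$, and the conjugate of a hook partition is again a hook partition, so $\lambda\in\mathcal{L}_{p'}(n)$ if and only if $\lambda'\in\mathcal{L}_{p'}(n)$. Thus conjugation is an involution on $\mathcal{L}_{p'}(n)$, and its orbits have size one or two. An orbit has size one exactly when $\lambda=\lambda'$; for a hook partition $(n-x,1^x)$ this forces $n=2x+1$, so there is at most one self-conjugate hook partition, and none when $n$ is even.

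Next, for every orbit of size two, say $\{\lambda,\lambda'\}$ with $\lambda\neq\lambda'$, the restriction $(\chi^\lambda)_{A_n}$ is irreducible by \cite[Thm.~2.5.7]{JK}, has degree $\chi^\lambda(1)$ coprime to $p$ (as $\lambda\in\mathcal{L}_{p'}(n)$), and extends to $S_n$ since $\chi^\lambda$ itself is such an extension. Hence $\chi^\lambda(1)\in\mathrm{cd}_{p'}^{\mathrm{ext}}(A_n)$. Moreover, distinct size-two orbits yield distinct degrees: if $\{\lambda,\lambda'\}\neq\{\mu,\mu'\}$ then $\mu\notin\{\lambda,\lambda'\}$, so by the second statement of Proposition \ref{lem:p'hooks} we have $\chi^\lambda(1)\neq\chi^\mu(1)$.

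Finally I would count. Writing $s\in\{0,1\}$ for the number of size-one orbits and $d$ for the number of size-two orbits, we have $|\mathcal{L}_{p'}(n)|=s+2d$, so $d=\lfloor|\mathcal{L}_{p'}(n)|/2\rfloor$ whether $s=0$ or $s=1$. Since each of the $d$ size-two orbits contributes a distinct element to $\mathrm{cd}_{p'}^{\mathrm{ext}}(A_n)$, this gives $|\mathrm{cd}_{p'}^{\mathrm{ext}}(A_n)|\geq d=\lfloor|\mathcal{L}_{p'}(n)|/2\rfloor$. The only genuinely conceptual point---as opposed to bookkeeping---is that a self-conjugate hook partition does not produce a character extending to $S_n$, because its restriction to $A_n$ splits into two constituents, neither of which extends; this is precisely why the bound carries a floor rather than exactly $|\mathcal{L}_{p'}(n)|/2$.
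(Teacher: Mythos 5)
Your proposal is correct and takes essentially the same approach as the paper: the paper likewise uses closure of $\mathcal{L}_{p'}(n)$ under conjugation, notes that the unique possible self-conjugate hook is $((n+1)/2,1^{(n-1)/2})$ for $n$ odd, picks one representative per conjugate pair via the set $S=\{\lambda\in\mathcal{L}_{p'}(n)\mid \lambda_1>\lambda_1'\}$ with $|S|=\lfloor|\mathcal{L}_{p'}(n)|/2\rfloor$, and invokes the second statement of Proposition \ref{lem:p'hooks} to see that these representatives yield pairwise distinct degrees of irreducible $p'$-characters of $A_n$ extending to $S_n$. Your orbit-counting bookkeeping ($|\mathcal{L}_{p'}(n)|=s+2d$ with $s\in\{0,1\}$) is just a rephrasing of the paper's choice of $S$, and your closing remark about the split restriction of the self-conjugate hook is accurate.
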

\begin{proof}
A hook-partition $\lambda\in\mathcal{L}(n)$ is such that $\lambda=\lambda'$ if and only if $n$ is odd and $\lambda=((n+1)/2, 1^{(n-1)/2})$. Moreover, the set $\mathcal{L}_{p'}(n)$ is clearly closed under conjugation of partitions. It follows that $((n+1)/2, 1^{(n-1)/2})\in\mathcal{L}_{p'}(n)$ if and only if $|\mathcal{L}_{p'}(n)|$ is odd. Let $S=\{\lambda\in\mathcal{L}_{p'}(n)\ |\ \lambda_1>\lambda'_1\}$. The above discussion shows that $|S|=\lfloor |\mathcal{L}_{p'}(n)|/2\rfloor$. The second statement of Proposition \ref{lem:p'hooks} implies that $|\mathrm{cd}(S)|=|S|$. 
We observe that if $\lambda\in S$ then $(\chi^\lambda)_{A_n}$ is irreducible of degree coprime to $p$ and clearly extends to $S_n$. In other words, $(\chi^\lambda)_{A_n}(1)\in\mathrm{cd}_{p'}^{\mathrm{ext}}(A_n)$. 
Moreover, for $\lambda, \mu\in S$ we have that $(\chi\6\lambda)_{A_n}(1)=(\chi\6\mu)_{A_n}(1)$ if and only if $\lambda=\mu$. Thus we conclude that 
$|\mathrm{cd}_{p'}^{\mathrm{ext}}(A_n)|\geq |\mathrm{cd}(S)|=|S|=\lfloor |\mathcal{L}_{p'}(n)|/2\rfloor$, as desired.
\end{proof}

In order to verify that Theorem C holds for alternating groups, 
we aim of to show that $|\mathrm{cd}_{p'}^{\mathrm{ext}}(A_n)|\geq 3$, for all $n\geq 7$. 
Propositions \ref{lem:p'hooks} and \ref{prop:lowerbound} give in most of the cases a much larger lower bound than the one needed. In fact, Proposition \ref{cor: An} below is a consequence of these propositions, together with the analysis of the cases where $n\in\mathbb{N}$ is such that $\lfloor|\mathcal{L}_{p'}(n)|/2\rfloor\leq 2$. 

The following facts are easy applications of the hook-length formula and are important to deal with the few exceptional cases mentioned above.
%\begin{lem}\label{lem:quasihooks}
%Let $n,m,t\in\mathbb{N}$ and $c\in\{2,3\}$ be such that $n=m+t+c$ and such that $0\leq t\leq m-4$. Let $\lambda(t)=(m,c,1^t)$. Then $\chi^{\lambda(t)}(1)<\chi^{\lambda(t+1)}(1)$.
%\end{lem}
\begin{lem}\label{lem:quasihooks}
Let $n,t, c\in\mathbb{N}$ be such that $c\in\{2,3\}$, $n\geq 4+c$ and $0\leq t\leq n-2c$.
Let $\lambda(t)$ be the partition of $n$ defined as $\lambda(t):=(n-c-t,c,1^t)\in\mathcal{P}(n)$. If $0\leq t\leq \lfloor\frac{n-4-c}{2}\rfloor$,
then $$\chi^{\lambda(t)}(1)<\chi^{\lambda(t+1)}(1).$$
\end{lem}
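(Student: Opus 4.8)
My plan is to make everything explicit via the hook-length formula and reduce the claimed inequality to a single polynomial inequality in $n$ and $t$, which I will then analyze as a quadratic in one well-chosen variable. Write $a:=n-c-t$ for the length of the first row of $\lambda(t)=(n-c-t,c,1^t)$; the hypotheses $c\in\{2,3\}$, $n\geq 4+c$ and $t\leq n-2c$ guarantee that this is a genuine partition of the stated shape, and that $\lambda(t+1)$ has the same shape. First I would read off the multiset $\mathcal{H}(\lambda(t))$ directly from the Young diagram. The $(1,1)$-hook is $n-c+1$; the remaining nodes $(1,2),\dots,(1,c)$ of the first row give the hooks $a,a-1,\dots,a-c+2$, and the nodes $(1,c+1),\dots,(1,a)$ give $1,2,\dots,a-c$; the second row gives the $(2,1)$-hook $c+t$ together with $1,\dots,c-1$; and the leg of the first column below the second row gives $1,2,\dots,t$. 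The crucial structural point is that these hooks organize into factorials, so that
$$\prod_{h\in\mathcal{H}(\lambda(t))}h=(n-c+1)\,(c-1)!\,(c+t)\,t!\cdot\frac{(n-c-t)!}{\,n-2c-t+1\,},$$
using $a(a-1)\cdots(a-c+2)\cdot(a-c)!=a!/(a-c+1)$.

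Next I would use the hook-length formula $\chi^{\lambda(t)}(1)=n!\big(\prod_{h\in\mathcal{H}(\lambda(t))}h\big)^{-1}$ and form the ratio of consecutive degrees. Since both partitions lie in $\mathcal{P}(n)$, the factor $n!$ cancels and the products telescope after substituting $t\mapsto t+1$, yielding
$$\frac{\chi^{\lambda(t+1)}(1)}{\chi^{\lambda(t)}(1)}=\frac{\prod_{h\in\mathcal{H}(\lambda(t))}h}{\prod_{h\in\mathcal{H}(\lambda(t+1))}h}=\frac{(n-c-t)(n-2c-t)(c+t)}{(n-2c-t+1)(c+t+1)(t+1)}.$$
Thus $\chi^{\lambda(t)}(1)<\chi^{\lambda(t+1)}(1)$ is equivalent to the polynomial inequality
$$(n-c-t)(n-2c-t)(c+t)>(n-2c-t+1)(c+t+1)(t+1).\tag{$\star$}$$

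Finally I would establish $(\star)$ by setting $w:=n-t$ and observing that the hypothesis $t\leq\lfloor\frac{n-4-c}{2}\rfloor$ is exactly $w\geq t+c+4$. Rewriting $(\star)$ as $Q(w)>0$, where $Q(w):=(c+t)(w-c)(w-2c)-(c+t+1)(t+1)(w-2c+1)$ is an upward parabola in $w$ (leading coefficient $c+t>0$), it suffices to check that both $Q$ and $Q'$ are nonnegative at the left endpoint $w=t+c+4$, since then $Q'\geq 0$ and hence $Q$ is nondecreasing on $[t+c+4,\infty)$. A direct evaluation gives, for $c=2$, $Q(t+6)=t^2+5t+7$ and $Q'(t+6)=(t+3)^2$, and for $c=3$, $Q(t+7)=(t+1)(t+4)$ and $Q'(t+7)=t^2+6t+11$; all four are strictly positive for every $t\geq 0$, completing the proof. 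I expect the only delicate step to be the hook-length bookkeeping for this ``hook-with-a-second-row'' shape and the verification that $\lambda(t+1)$ remains of the same shape on the given range of $t$; once the product formula is in hand, the remaining algebra is routine and the endpoint reduction makes the final inequality transparent.
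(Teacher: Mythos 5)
Your proof is correct and follows essentially the same route as the paper: both compute, via the hook-length formula, the ratio $\frac{\chi^{\lambda(t+1)}(1)}{\chi^{\lambda(t)}(1)}=\frac{(n-c-t)(n-2c-t)(c+t)}{(n-2c-t+1)(c+t+1)(t+1)}$ (the paper's $\frac{m(m-2)(t+2)}{(m-1)(t+3)(t+1)}$ with $m=n-c-t$ in the case $c=2$) and then verify it exceeds $1$ on the stated range of $t$. The only differences are cosmetic: the paper finishes by substituting the extremal value $t=m-4$ into the monotone factor while you analyze the quadratic $Q(w)$ in $w=n-t$ at the left endpoint, and you carry out the case $c=3$ explicitly, which the paper omits as ``completely similar.''
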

\begin{proof}
To ease the notation we let $m:=\lambda(t)_1=n-c-t$. 
The (strange) hypothesis $0\leq t\leq \lfloor\frac{n-4-c}{2}\rfloor$ is equivalent to say that $\lambda(t+1)_1\geq (\lambda(t+1)')_1$. In turn this is equivalent to say that $0\leq t\leq m-4$.

If $c=2$ then using the hook length formula we observe that
$$\frac{\chi\6{\lambda(t+1)}(1)}{\chi\6{\lambda_{t}}(1)}
=\frac{m(m-2)(t+2)}{(m-1)(t+3)(t+1)}\geq \frac{(m-2)\62}{(m-1)\62}\cdot\frac{m}{m-3}>1,$$
where the first inequality is obtained by replacing $t$ with $m-4$.
The proof of the statement for $c=3$ is completely similar and therefore it is omitted. 
\end{proof}
We care to remark that a more general statement (with arbitrary $c\in\mathbb{N}$) does not hold, as for instance is shown by the pair $(6,5,1,1)$ and $(5,5,1,1,1)$.

\begin{prop}\label{cor: An}
Let $n\geq 7$ be a natural number and let $p>3$ be a prime, then $|\mathrm{cd}_{p'}^{\mathrm{ext}}(A_n)|\geq 3$. 
\end{prop}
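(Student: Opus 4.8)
The plan is to first dispose of the generic situation and then classify and treat a short list of exceptions. By Proposition \ref{prop:lowerbound} we have $|\mathrm{cd}_{p'}^{\mathrm{ext}}(A_n)|\geq\lfloor|\mathcal{L}_{p'}(n)|/2\rfloor$, so if $|\mathcal{L}_{p'}(n)|\geq 6$ there is nothing to prove. It therefore suffices to treat those $n\geq 7$ with $|\mathcal{L}_{p'}(n)|\leq 5$. Using the closed formula $|\mathcal{L}_{p'}(n)|=a_1p^{n_1}\prod_{j=2}^k(a_j+1)$ of Proposition \ref{lem:p'hooks}, I would check that $|\mathcal{L}_{p'}(n)|\leq 5$ forces $n_1=0$ and $k\geq 2$ (if $n_1\geq 1$ the first factor is already at least $p\geq 5$, and a second factor is at least $2$; while $k=1$ gives $|\mathcal{L}_{p'}(n)|=n\geq 7$). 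Enumerating the products $a_1\prod_{j\geq 2}(a_j+1)\leq 5$ then leaves exactly three families: $n=1+ap^b$ with $1\leq a\leq 4$ and $b\geq 1$; $n=2+p^b$ with $b\geq 1$; and $n=1+p^b+p^{b'}$ with $1\leq b<b'$. In particular every exceptional $n$ satisfies $n\equiv 1\pmod p$ (the first and third families) or $n\equiv 2\pmod p$ (only the family $n=2+p^b$).

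These exceptional $n$ are then handled by supplementing the trivial degree $1\in\mathrm{cd}_{p'}^{\mathrm{ext}}(A_n)$ (coming from $(n)$, whose restriction is the trivial character of $A_n$) with \emph{quasihook} degrees. I would use the two partitions $(n-2,2)$ and $(n-3,2,1)$, which for $n\geq 7$ are not self-conjugate, and whose degrees, computed from the hook-length formula, are $\chi^{(n-2,2)}(1)=\tfrac{n(n-3)}{2}$ and $\chi^{(n-3,2,1)}(1)=\tfrac{n(n-2)(n-4)}{3}$. Since neither partition is self-conjugate, its restriction to $A_n$ is irreducible and extends to $S_n$, so its degree lies in $\mathrm{cd}_{p'}^{\mathrm{ext}}(A_n)$ as soon as it is coprime to $p$. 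Reducing the numerators modulo $p$ and using $p>3$, one sees that $p'$-ness is governed by the residue of $n$: if $n\equiv 1\pmod p$ then $n(n-3)\equiv -2$ and $n(n-2)(n-4)\equiv 3$ are both units modulo $p$, so \emph{both} quasihook degrees are $p'$; if $n\equiv 2\pmod p$ then $n(n-3)\equiv -2$ is still a unit while $n(n-2)(n-4)\equiv 0$, so only $(n-2,2)$ survives. Finally, Lemma \ref{lem:quasihooks} applied with $c=2$ and $t=0$ gives $\chi^{(n-2,2)}(1)<\chi^{(n-3,2,1)}(1)$ for $n\geq 7$, so the two quasihook degrees are distinct.

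The conclusion is then immediate. If $n\equiv 1\pmod p$, the partitions $(n)$, $(n-2,2)$ and $(n-3,2,1)$ contribute the degrees $1$, $d_1:=\tfrac{n(n-3)}{2}$ and $d_2:=\tfrac{n(n-2)(n-4)}{3}$ to $\mathrm{cd}_{p'}^{\mathrm{ext}}(A_n)$, and $1<d_1<d_2$ for $n\geq 7$, giving three distinct degrees. If $n\equiv 2\pmod p$ (the family $n=2+p^b$), the hook $(n-1,1)$ has degree $n-1$, which is coprime to $p$ since $n\equiv 2$, and is not self-conjugate; together with $(n)$ and $(n-2,2)$ this yields the degrees $1$, $n-1$ and $\tfrac{n(n-3)}{2}$, and $1<n-1<\tfrac{n(n-3)}{2}$ for $n\geq 7$, again three distinct values. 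In either case $|\mathrm{cd}_{p'}^{\mathrm{ext}}(A_n)|\geq 3$. I expect the main obstacle to be the bookkeeping of the first step: extracting from the product formula the complete list of exceptional $n$ together with their residues modulo $p$, and then choosing the supplementary partitions carefully. In particular, for $n\equiv 2\pmod p$ a second quasihook such as $(n-3,3)$ also has $p'$-degree, but its degree can coincide with that of $(n-2,2)$ (for instance both equal $14$ at $n=7$), which is why I would pass to the hook $(n-1,1)$ there; verifying non-self-conjugacy, $p'$-ness, and strict distinctness of the chosen degrees is the delicate point, whereas the two explicit degree computations are routine.
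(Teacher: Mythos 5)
Your proof is correct, and while it shares the paper's overall skeleton---reduce via Propositions \ref{lem:p'hooks} and \ref{prop:lowerbound} to the $n$ with $\lfloor|\mathcal{L}_{p'}(n)|/2\rfloor\leq 2$, then treat those separately---your handling of the exceptional cases is genuinely different from the paper's. The paper verifies $p'$-ness of parametrized families of quasihooks $(n-c-t,c,1^t)$ via $p^k$-core computations (Lemma \ref{lem: last layer}), separates degrees along a whole chain using Lemma \ref{lem:quasihooks}, treats $n=1+p^k+p^h$ by a distinct hook-addition construction on $\mathcal{P}_{p'}(1+p^k)$, and needs a direct check at $n=7$; you instead observe that every exceptional $n$ satisfies $n\equiv 1$ or $2\pmod p$ and then verify everything by congruences on the closed-form degrees $\chi^{(n-2,2)}(1)=n(n-3)/2$, $\chi^{(n-3,2,1)}(1)=n(n-2)(n-4)/3$, and $\chi^{(n-1,1)}(1)=n-1$ (all three formulas are correct, and $p>3$ makes the denominators $2,3$ units). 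This merges the families $1+ap^b$ and $1+p^b+p^{b'}$ into a single argument, absorbs $n=7$ uniformly, and avoids core manipulations entirely; what it gives up is the paper's mechanism for producing arbitrarily many distinct $p'$-degrees, which the chain of quasihooks provides but three fixed partitions do not. Two further points in your favor: you correctly spotted the degree collision $\chi^{(n-3,3)}(1)=\chi^{(n-2,2)}(1)=14$ at $n=7$ (the same obstruction that forces the paper into $c=3$ quasihooks and its special case), and your enumeration of the exceptional $n$ is actually the complete one---it includes $n=1+4p^b$, where $|\mathcal{L}_{p'}(n)|=5$ and the floor bound gives only $2$, a case the paper's displayed exclusion set $\{1+p^{n_2}+p^{n_3},\,2+p^{n_2},\,1+a_2p^{n_2}\mid a_2\in\{1,2,3\}\}$ omits (harmlessly, since the paper's argument for $n=1+ap^k$ never uses $a\leq 3$, but your list is the accurate one). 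One cosmetic remark: your single citation of Lemma \ref{lem:quasihooks} (for $c=2$, $t=0$) is dispensable, as the inequality $n(n-3)/2<n(n-2)(n-4)/3$ follows from $(2n-5)(n-5)>0$ for $n\geq 7$, making your exceptional-case analysis self-contained.
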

\begin{proof}
Let $n=\sum_{j=1}^ka_jp^{n_j}$ be the $p$-adic expansion of $n$, where $0\leq n_1<n_2<\cdots <n_k$ and $1\leq a_j\leq p-1$ for all $j\in\{1,\ldots, k\}$. 
Suppose that 
$$n\notin \{1+p\6{n_2}+p\6{n_3}, 2+p\6{n_2}, 1+a_2p\6{n_2}\ |\ a_2\in\{1,2,3\}\},$$ since $n\geq 7$, then $|\mathrm{cd}_{p'}^{\mathrm{ext}}(A_n)|\geq 3$, by Propositions \ref{lem:p'hooks} and \ref{prop:lowerbound}.
To conclude the proof, we analize the remaining cases one by one. 

Suppose first that $n=1+ap\6k$, for some $a\in\{1,2,3\}$. Let $t\in\mathbb{N}$ be such that $0\leq t\leq \lfloor\frac{n-6}{2}\rfloor$ and let $\lambda(t)=(n-2-t, 2, 1\6t)$. Observe that $\lambda(t)\neq(\lambda(t))'$. By Lemma \ref{lem:quasihooks} we have that $$1<\chi\6{\lambda(0)}(1)<\chi\6{\lambda(1)}(1)<\cdots<\chi\6{\lambda(\lfloor\frac{n-6}{2}\rfloor)}(1)<\chi\6{\lambda(\lfloor\frac{n-6}{2}\rfloor+1)}(1).$$ Moreover, since $h_{(1,1)}(\lambda(t))=ap\6k$ and $C_{p\6k}(\lambda(t))=(1)$, then Lemma \ref{lem: last layer} implies that $\chi\6{\lambda(t)}\in\mathrm{Irr}_{p'}(S_n)$, for all $t\in\{0,1,\dots, \lfloor\frac{n-6}{2}\rfloor\}$. 
Since $n=1+ap\6k\geq 7$ it follows that $n\geq 8$ and hence that $ \lfloor\frac{n-6}{2}\rfloor\geq 1$.  Since $\lambda(t)$ is never equal to the trivial partition,
we conclude that $|\mathrm{cd}_{p'}^{\mathrm{ext}}(A_n)|\geq 3$. 

Let $k,h\in\mathbb{N}$ be such that $k<h$, and suppose that $n=1+p\6k+p\6h$. To ease the notation we let $m=1+p\6k$. By Lemma \ref{lem: last layer} it is easy to observe that $$\mathcal{P}_{p'}(m)=\{(p\6k-t, 2, 1\6{t-1})\ |\ t\in\{1,\ldots, p\6k-2\}\}\cup\{(m), (1\6m)\}.$$
Since $p\geq 5$ we have that $|\mathcal{P}_{p'}(m)|\geq 3$. For each $\gamma\in\mathcal{P}_{p'}(m)$ we let $$\lambda(\gamma):=(\gamma_1+p\6h, \gamma_2,\ldots, \gamma_\ell)\in\mathcal{P}(n).$$
Using Lemma \ref{lem: last layer} it is now routine to check that $\lambda(\gamma)\vdash_{p'}n$. Moreover, since $p\6h>m$ we have that $\lambda(\gamma)\neq(\lambda(\gamma))'$ for all $\gamma\in\mathcal{P}_{p'}(m)$. Therefore, using again Lemma \ref{lem:quasihooks} we conclude that also in this case $|\mathrm{cd}_{p'}^{\mathrm{ext}}(A_n)|\geq 3$. 

Finally suppose that $n=2+p\6k$, for some $k\in\mathbb{N}$. Let $t\in\mathbb{N}$ be such that $0\leq t\leq \lfloor\frac{n-7}{2}\rfloor$ and let $\lambda(t)=(n-3-t, 3, 1\6t)$. Observe that $\lambda(t)\neq(\lambda(t))'$. Since $h_{(1,1)}(\lambda(t))=p\6k$ and $C_{p\6k}(\lambda(t))=(2)$, by Lemma \ref{lem: last layer} we deduce that $\chi\6{\lambda(t)}\in\mathrm{Irr}_{p'}(S_n)$, for all $t\in\{0,1,\dots, \lfloor\frac{n-7}{2}\rfloor\}$. 
If $p\6k\neq 5$ then $n\geq 9$ and hence $\lfloor\frac{n-7}{2}\rfloor\geq 1$. 
Since $\lambda(t)$ is never equal to the trivial partition, using Lemma \ref{lem:quasihooks} we conclude that $|\mathrm{cd}_{p'}^{\mathrm{ext}}(A_n)|\geq 3$.
If $p\6k=5$ then direct verfication shows that $|\mathrm{cd}_{p'}^{\mathrm{ext}}(A_7)|\geq 3$.
\end{proof}

We are now ready to prove Theorem B. 

\begin{cor}
Theorem B holds for all simple non-abelian alternating groups. 
\end{cor}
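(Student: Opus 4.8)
The plan is to deduce the statement from Proposition \ref{cor: An} for $n\geq 7$ and to check $A_5$ and $A_6$ by hand. Two elementary facts do most of the work. Since $\mathrm{Aut}(A_n)=S_n$ for every $n\neq 6$, a character of $A_n$ extends to $\mathrm{Aut}(A_n)$ exactly when it extends to $S_n$; and whenever a character $\chi$ of a normal subgroup $N\trianglelefteq G$ extends to some $\hat\chi\in\Irr(G)$, it is forced to be $G$-invariant, because $\chi^g=(\hat\chi^g)_N=\hat\chi_N=\chi$ for every $g\in G$ (as $\hat\chi$ is a class function on $G$). Consequently a character of $A_n$ that extends to $\mathrm{Aut}(A_n)$ is $P$-invariant for every subgroup $P\leq\mathrm{Aut}(A_n)$, in particular for every $p$-subgroup. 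The second fact is that of two distinct positive integers the larger cannot divide the smaller.

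For $n\geq 7$ I would argue directly. By Proposition \ref{cor: An} we have $|\mathrm{cd}_{p'}^{\mathrm{ext}}(A_n)|\geq 3$; as the trivial character shows $1\in\mathrm{cd}_{p'}^{\mathrm{ext}}(A_n)$, there are two distinct degrees $1<d_1<d_2$ in this set. Pick $\alpha,\beta\in\irrp{A_n}$ of degrees $d_1$ and $d_2$ respectively, both extending to $S_n=\mathrm{Aut}(A_n)$; both are nonlinear. Then $\alpha$ extends to $\mathrm{Aut}(A_n)$ as required, while $\beta$ --- also extending to $\mathrm{Aut}(A_n)$ --- is $P$-invariant for every $p$-subgroup $P\leq\mathrm{Aut}(A_n)$ by the previous paragraph, and $\beta(1)=d_2>d_1=\alpha(1)$ gives $\beta(1)\nmid\alpha(1)$. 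This yields the conclusion of Theorem \ref{thmB} (in fact of Theorem \ref{thmC}, since both characters extend).

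The genuine obstacle is $A_5$ and $A_6$: these are precisely the groups admitting no two nonlinear $p'$-characters of distinct degree that both extend to the automorphism group (hence their exclusion from Theorem \ref{thmC}), so here the weaker invariance requirement on $\beta$ is essential. Only $p=5$ needs attention, since for $p\geq 7$ we have $p\nmid|S|$, so $\irrp{S}=\Irr(S)$, and $p\nmid|\mathrm{Aut}(S)|$, so the sole $p$-subgroup of $\mathrm{Aut}(S)$ is trivial and the invariance of $\beta$ is vacuous. For $p=5$ the point is that $|\mathrm{Aut}(S):S|$ is coprime to $5$ for both $S=A_5$ and $S=A_6$, so each Sylow $5$-subgroup of $\mathrm{Aut}(S)$ already lies in $S$ and acts by inner automorphisms; hence every irreducible character of $S$ is invariant under every $5$-subgroup of $\mathrm{Aut}(S)$, and the condition on $\beta$ holds automatically. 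Inspecting the character tables (for instance via the GAP character table library) then finishes the job: for $A_5$ take $\alpha=(\chi^{(4,1)})_{A_5}$ of degree $4$, which extends to $S_5$, and $\beta$ of degree $3$, so that $\beta(1)=3\nmid 4=\alpha(1)$; for $A_6\cong\PSL_2(9)$ take $\alpha$ the Steinberg character of degree $9$, which extends to $\mathrm{Aut}(A_6)$, and $\beta$ of degree $8$, so that $\beta(1)=8\nmid 9=\alpha(1)$. In both cases $\alpha,\beta$ are nonlinear $5'$-characters and, by the above, $\beta$ is invariant under every $5$-subgroup of $\mathrm{Aut}(S)$, which establishes Theorem \ref{thmB} for $A_5$ and $A_6$ and completes the proof.
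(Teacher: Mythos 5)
Your proposal is correct and takes essentially the same route as the paper: Proposition \ref{cor: An} settles $n\geq 7$ (discarding the degree $1$ and taking two distinct nonlinear degrees, so the larger cannot divide the smaller), while for $A_5$ and $A_6$ you make the same explicit choices as the paper ($\alpha$ of degree $4$, resp.\ $9$, and $\beta$ of degree $3$, resp.\ $8$) together with the same key observation that, since $p>3$, every Sylow $p$-subgroup of $\mathrm{Aut}(A_n)$ lies in $A_n$ and hence acts by inner automorphisms, making the invariance condition on $\beta$ automatic. The only cosmetic difference is that for $n\geq 7$ you deduce the $P$-invariance of $\beta$ from its extendibility to $\mathrm{Aut}(A_n)$, whereas the paper invokes the Sylow containment uniformly for all $n\geq 5$.
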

\begin{proof}
Let $n\geq 5$. 
Since $p>3$ a Sylow $p$-group $P$ of $\mathrm{Aut}(A_n)$ is necessarily contained in $A_n$.  Hence all irreducible characters of $A_n$ are $P$-invariant. With this in mind, we observe that for all $n\geq 7$ Theorem B follows from Proposition \ref{cor: An}.
If $n=5$ then (in accordance with the notation used in the statement of Theorem B) we choose $\alpha=(\chi\6{(4,1)})_{A_5}$ and $\beta$ an irreducible constituent of $(\chi\6{(3,1,1)})_{A_5}$. Similarly for $n=6$ we observe that Theorem B holds by choosing $\alpha=(\chi\6{(4,2)})_{A_6}$ and $\beta$ an irreducible constituent of $(\chi\6{(3,2,1)})_{A_6}$.
\end{proof}

\section{Simple Groups of Lie Type}\label{sec:lietype}

Throughout this section, we will adopt the following notation.  Let $S$ be a simple group of Lie type, by which we mean that there is a simply connected simple linear algebraic group $\bg{G}$ defined over $\overline{\F}_q$ such that $S=G/Z(G)$, where $G:=\bg{G}^F$ is the group of fixed points of $\bg{G}$ under a Steinberg endomorphism $F$.  Here $\overline{\F}_q$ is an algebraic closure of the field $\F_q$ with $q$ elements, and $q$ is a power of some prime.  We further write $G^\ast=(\bg{G}^\ast)^{F^\ast}$, where $(\bg{G}^\ast, F^\ast)$ is dual to $(\bg{G}, F)$.  

We denote by $\iota\colon \bg{G}\hookrightarrow\wt{\bg{G}}$ a regular embedding, as in \cite[Chapter 15]{cabanesenguehard}, and let $\iota^\ast\colon \wt{\bg{G}}^\ast\twoheadrightarrow\bg{G}^\ast$ be the dual surjection of algebraic groups.  When $F$ is a Frobenius endomorphism, %(///needed?///), 
we may extend $F$ to a Frobenius endomorphism on $\wt{\bg{G}}$, which we also denote by $F$, and we let $\wt{G}:=\wt{\bg{G}}^F$ and $\wt{G}^\ast:=(\wt{\bg{G}}^\ast)^{F^\ast}$ be the corresponding group of Lie type and its dual, respectively.   Then $G\lhd \wt{G}$ and the automorphisms of $G$ are generated by the inner automorphisms of $\wt{G}$ (known as inner-diagonal automorphisms of $G$) together with graph and field automorphisms. 

When $\bg{G}$ is of type $A_{n-1}$, we use the notation $\PSL_n^\epsilon(q)$ with $\epsilon\in\{\pm1\}$ to denote $\PSL_n(q)$ for $\epsilon=1$ and $\PSU_n(q)$ for $\epsilon=-1$.   We will also use the corresponding notation for $\SL_n^\epsilon(q), \GL_n^\epsilon(q),$ and $\PGL_n^\epsilon(q)$.

The goal of this section is to prove the following, which is Theorem B for groups of Lie type.
\begin{theorem}\label{thm:mainLie}
Let $S$ be a simple group of Lie type defined over $\F_q$ and let $p>3$ be a prime dividing $|S|$.  Assume that $S$ is not one of $\PSL_2(q), \PSL^\epsilon_3(q),$ $\PSp_4(q)$, or $\tw{2}B_2(q)$. Then there exist two nontrivial characters $\chi_1, \chi_2\in\irrp S$ such that $\chi_1(1)\neq \chi_2(1)$ and both $\chi_1$ and $\chi_2$ extend to $\aut S$.
\end{theorem}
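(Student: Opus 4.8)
The plan is to exploit the rich, well-understood character theory of finite groups of Lie type, specifically the theory of semisimple and unipotent characters and the known criteria for when such characters extend to the full automorphism group. The target is to produce two nontrivial irreducible characters $\chi_1,\chi_2\in\irrp S$ of distinct degrees that extend to $\aut S$. Since $\aut S$ is generated by inner-diagonal, field, and graph automorphisms, the strategy will be to build candidate characters on $G=\bg G^F$ (or on $\wt G$) that are stable under all of these, and then descend to $S=G/Z(G)$.

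First I would isolate a supply of $p'$-characters that are guaranteed to extend. The natural candidates are the \emph{unipotent characters}: since $p>3$ and we are assuming $S$ is not one of the small exceptional families $\PSL_2(q),\PSL_3^\epsilon(q),\PSp_4(q),\tw2B_2(q)$, the group has rank large enough that there are several distinct unipotent character degrees. Unipotent characters are invariant under inner-diagonal automorphisms (they lie in the principal rational series and are fixed by $\wt G$-conjugation), and by the work of Malle and Lusztig they extend to their inertia group in $\aut S$; one checks they are fixed by field and graph automorphisms up to the known exceptions. The Steinberg character $\mathrm{St}$ of degree $q^N$ (a power of the defining prime) is always $p'$ since $p\neq q$, always $\aut S$-invariant, and always extends — this is the cleanest first character. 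The main task is then to produce a \emph{second} unipotent (or semisimple) character, of degree $\neq q^N$, also $\aut S$-invariant and extending.

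The key steps, in order, are: (i) take $\chi_1=\mathrm{St}$, noting $p\nmid\mathrm{St}(1)=q^N$ and that $\mathrm{St}$ extends to $\aut S$; (ii) for $\chi_2$, split into cases by Lie type and rank, selecting in each a unipotent character whose degree is a product of cyclotomic polynomials $\Phi_d(q)$ with $p$ not dividing any relevant factor — here I would invoke that $p>3$ and $p\mid|S|$ forces $p$ to divide some $\Phi_d(q)$, and choose a unipotent character supported away from that $d$; (iii) verify extendibility of $\chi_2$ using Malle's classification of the extendibility of unipotent characters (essentially all unipotent characters extend to $\aut S$ outside a short explicit list, which is why the excluded families appear); (iv) confirm $\chi_1(1)\neq\chi_2(1)$, which is immediate once $\chi_2$ is chosen to have degree not a power of $q$. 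Where the rank or $p$-divisibility is delicate, I would instead use a suitable semisimple character $\chi_s$ attached to a semisimple element $s\in G^\ast$ of order dividing $q^a-\epsilon$, chosen so that its degree $[G^\ast:C_{G^\ast}(s)]_{p'}$ is coprime to $p$ and $s$ is rational and stable under the graph/field action, then apply the extendibility results for semisimple characters in regular series.

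\textbf{The main obstacle} I anticipate is (iii): controlling extendibility uniformly across all Lie types and all $p>3$ simultaneously. Extendibility to $\aut S$ can fail for specific unipotent characters in specific types (notably in $\tw2B_2,\tw2G_2,\tw3D_4$ and certain characters of $D_n$ and the exceptional groups permuted by graph automorphisms), and these failures are precisely what dictate the list of excluded families in the hypothesis. Navigating this will require a careful type-by-type argument, leaning heavily on Malle's and Lusztig's determination of stabilizers of unipotent characters under $\out S$ and the criterion that a stable character of a group with cyclic $\out$-image extends. A secondary difficulty is ensuring the chosen $\chi_2$ is simultaneously $p'$ \emph{and} extending \emph{and} of degree $\neq q^N$; for the smaller-rank groups just above the excluded list this forces one to check a handful of concrete degrees by hand, but the hypotheses $p>3$ and $n\geq$ the relevant bound are exactly what guarantee enough room in the cyclotomic factorization of $|S|$ to find such a character.
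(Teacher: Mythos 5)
Your plan works, and matches the paper, only on half of the theorem. The statement allows $p$ to be the \emph{defining} characteristic ($p\mid q$; note $p\mid |S|$ automatically then), and in that case your construction fails at step (i): $\mathrm{St}(1)=q^N$ is a power of $p$, so $\mathrm{St}\notin\irrp S$. The claim that $\mathrm{St}$ is ``always $p'$ since $p\neq q$'' silently assumes $p\nmid q$. Worse, the whole unipotent supply dries up in defining characteristic: every nontrivial unipotent character has generic degree divisible by a positive power of $q$, so there are \emph{no} nontrivial $p'$-degree unipotent characters at all, and steps (i)--(iv) collapse wholesale rather than needing case-by-case care. Relatedly, you attribute the excluded families to failures of unipotent extendibility; in fact only $\tw{2}B_2(q)$ (with $p\mid q^2-1$) arises that way, while $\PSL_2(q)$, $\PSL_3^\epsilon(q)$ and $\PSp_4(q)$ are excluded because of the defining-characteristic case (for square $q$ the only candidate $\aut{\wt{G}^\ast}$-invariant semisimple classes all yield equal degrees). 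For $p\nmid q$ your argument is essentially the paper's Proposition on non-defining characteristic: $\chi_1=\mathrm{St}$, a second unipotent character chosen type-by-type so that $p>3$ cannot divide both listed degrees (via cyclotomic-polynomial divisibility, as in Lemma 5.2 of Malle's paper), and extendibility from Malle's Theorems 2.4 and 2.5; that half is sound.

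Your semisimple-character fallback points in the right direction for the missing half, but as written it cannot carry it. What the paper actually needs (and verifies) for semisimple $s_1,s_2\in\wt{G}^\ast$ is: (1) $s_i\in[\wt{G}^\ast,\wt{G}^\ast]$, so that $\wt{\chi}_{s_i}$ is trivial on $Z(\wt{G})$; (2) $s_i$ not $\wt{G}^\ast$-conjugate to $s_iz$ for any nontrivial $z\in Z(\wt{G}^\ast)$, which by Digne--Michel is exactly what makes the restriction $\wt{\chi}_{s_i}|_G$ irreducible --- this multiplicity-freeness is the real crux in type $A$, and your sketch never addresses it (ensured in the paper by arranging $|\delta|>4$ and comparing eigenvalue multisets); (3) invariance of the $\wt{G}^\ast$-class under graph and field automorphisms, which is the only condition your ``rational and stable'' remark covers; and (4) $|C_{\wt{G}^\ast}(s_1)|_{p'}\neq |C_{\wt{G}^\ast}(s_2)|_{p'}$ to get distinct degrees. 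Moreover the $p'$-degree condition you impose on $\chi_s$ is vacuous in defining characteristic (all semisimple characters have degree prime to the defining prime), so it does no selecting work where it is needed, and descent plus extension to $\aut S$ requires more than a cyclic-outer-group criterion: the paper invokes Sp\"ath's maximal extendibility results for $p'$-characters (Proposition 3.4 and the factor-set argument from Lemma 2.13 of her paper), together with an explicit construction via coroot elements $h_\alpha(\delta)$ --- using Lang--Steinberg to force $F^\ast$-fixedness when $\delta\notin\F_q^\times$ --- outside type $A$, and explicit eigenvalue sets in type $A$. In short: the gap is not ``a handful of small-rank degrees to check by hand'' but an entire characteristic, requiring a genuinely different mechanism.
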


To deal with the exceptions in \prettyref{thm:mainLie}, we prove the following, which is Theorem C.

\begin{theorem}\label{thm:mainLieweak}
Let $S$ be a simple group of Lie type defined over $\F_q$ and let $p>3$ be a prime dividing $|S|$.     Then there exist two nontrivial characters $\chi_1, \chi_2\in\irrp S$ such that $\chi_2(1)\nmid\chi_1(1)$, $\chi_1$ extends to $\aut S$, and $\chi_2$ is invariant under every $p$-group of $\aut S$.
\end{theorem}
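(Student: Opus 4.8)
The plan is to prove Theorem~\ref{thm:mainLieweak} by leveraging Theorem~\ref{thm:mainLie} (equivalently \prettyref{thm:mainLieweak}'s strong form, Theorem C) for all but the short list of exceptional families, and then handling each exception separately by hand. For a simple group $S$ of Lie type not in the exceptional list, Theorem~\ref{thm:mainLie} already hands us two nontrivial $\chi_1,\chi_2\in\irrp S$ with distinct degrees, both extending to $\aut S$. This immediately gives the conclusion of Theorem~\ref{thm:mainLieweak}: extendibility to $\aut S$ is stronger than invariance under every $p$-subgroup of $\aut S$, so $\chi_2$ is in particular $P$-invariant for every $p$-group $P\leq\aut S$; and since $\chi_1(1)\neq\chi_2(1)$ with $\chi_2(1)\neq 1$, we may arrange (swapping the two if necessary, using that neither is trivial) that $\chi_2(1)\nmid\chi_1(1)$, or more robustly pick the characters so that their degrees are incomparable under divisibility. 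So the entire content of the theorem lies in the exceptional families.

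Thus the main work is to verify the weaker statement for $S\in\{\PSL_2(q),\PSL_3^\epsilon(q),\PSp_4(q),\tw{2}B_2(q)\}$, together with the tiny cases $A_5,A_6$ at $p=5$ that were excluded from Theorem C but must be recovered here. For each of these families I would exhibit explicit $p'$-degree characters using the known generic character tables (Deligne--Lusztig theory, or the explicit degree lists in the literature for these small-rank groups). The key point to control is the interplay between the chosen characters and the outer automorphisms: I need one character $\chi_1$ that genuinely extends to $\aut S$, and a second character $\chi_2$ that need only survive conjugation by $p$-elements of $\aut S$. Since $p>3$ and these groups have small rank, a Sylow $p$-subgroup of $\aut S$ is typically inner-diagonal or field-induced of controlled order; I would use that $p$-elements of $\out S$ are restricted (the diagonal and graph parts of $\out S$ have order dividing a power of small primes, and the field part is cyclic), so that $P$-invariance of a semisimple or unipotent character can be read off from the action of $F$-induced automorphisms on Lusztig series.

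Concretely, the steps I would carry out are: (i) determine $\irrp S$ using the $p$-adic structure of $|S|$ and the degree polynomials, isolating at least two nontrivial $p'$-degrees that are not divisorially comparable; (ii) for each family, identify a character extending to $\aut S$ --- the Steinberg character is always a candidate since it is $\aut S$-invariant and extends, though its degree is a power of the defining characteristic and hence may or may not be coprime to $p$, so I would more often use a unipotent character whose extendibility is guaranteed by the fact that unipotent characters are invariant under diagonal automorphisms and extend across field/graph automorphisms by \cite{cabanesenguehard}-type results; (iii) for the $P$-invariant character, select a semisimple or unipotent character lying in a Lusztig series stabilized by every $p$-element of $\aut S$, which for $p>3$ is forced because such $p$-elements act trivially on the relevant part of the dual group. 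The exceptional small cases $A_5\cong\PSL_2(4)\cong\PSL_2(5)$, $A_6\cong\PSL_2(9)$, $\tw2B_2(8)$, and the smallest $\PSL_3^\epsilon, \PSp_4$ I would simply check directly against their character tables.

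The hard part will be the uniform verification of $P$-invariance for the second character across the $\PSL_2(q)$, $\PSL_3^\epsilon(q)$, and $\PSp_4(q)$ families as $q$ ranges over all prime powers, because here the structure of $\out S$ and the action of field automorphisms depend delicately on the $p$-adic valuation of the field-automorphism order versus $p$; in the borderline situations where $p$ divides the order of the field automorphism group one must ensure the chosen $\chi_2$ lies in an $F$-stable, hence field-automorphism-fixed, family. Managing the $\epsilon=\pm1$ dichotomy and the exact shape of $\aut S$ for $\PSp_4(q)$ in even versus odd characteristic (where an exceptional graph automorphism appears) will require the most care, and I would isolate those as separate subcases rather than attempting a single argument covering them all.
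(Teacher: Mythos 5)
Your reduction step is exactly the paper's: the paper likewise derives Theorem \ref{thm:mainLieweak} by combining Theorem \ref{thm:mainLie} with separate treatments of the excluded families (Lemmas \ref{lem:PSL23def}, \ref{lem:PSL23}, \ref{lem:SL2323}, \ref{lem:suz}), and your swap argument is valid: if both characters extend to $\aut S$ and have distinct nontrivial degrees, then after relabeling $\chi_2(1)\nmid\chi_1(1)$, and extendibility implies invariance under every $p$-subgroup. (A minor point: $A_5\cong\PSL_2(4)\cong\PSL_2(5)$ and $A_6\cong\PSL_2(9)$ need no separate recovery here; they are absorbed into the $\PSL_2(q)$ family, which this statement covers without exclusions.)

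The genuine gap is in the exceptional families, which you correctly identify as the entire content but leave at the level of a sketch, and the one concrete mechanism you offer there is wrong as stated: it is \emph{not} true that $p$-elements of $\aut{\wt{G}^\ast}$ ``act trivially on the relevant part of the dual group'' --- field automorphisms of $p$-power order act nontrivially on $\wt{G}^\ast$. The device the paper actually uses (conditions (3') and (4') of \prettyref{sec:strategy}) is arithmetic: write the field-twist degree as $a=p^b m$ with $(m,p)=1$ and choose semisimple elements whose eigenvalues have order dividing $r^m\pm 1$, i.e.\ lying in the fixed field of the Sylow $p$-part of the field automorphism group, so that the class is stable under every $p$-element of $\aut{\wt{G}^\ast}$. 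Moreover, two degenerate subcases defeat any attempt to find a \emph{second extending} character, so your ``robust choice'' cannot be made there and a genuinely weaker character must be produced: for $\PSL_2(q)$ with $p=5$ and $q=5^{5^b}$ (so $m=1$) the paper takes a character of degree $(q+1)/2$, which is invariant under all field automorphisms but is moved by the diagonal automorphism and hence does not extend to $\aut S$; and for $\tw{2}B_2(2^{2n+1})$ with $p\mid q^2-1$ one needs the number-theoretic observation (via \cite[Lemma 5.2]{malle07}, as in Lemma \ref{lem:SL2323}) that $p\mid q^2-1$ forces $m>1$ in $2n+1=p^bm$, so that the character $\chi_5(s)$ of degree $q^4+1$ with $\gamma^s$ of order $2^m-1$ exists and is $p$-element-invariant. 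Without these two ingredients --- the subfield eigenvalue trick and the ad hoc characters in the borderline cases --- your plan stalls precisely at what you yourself flag as the hard part.
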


\subsection{Defining Characteristic}

We begin by fixing some notation for this section.  Let $q=p^a$ be a power of a prime $p>3$.  We fix $a:=p^b\cdot m$ where $(m,p)=1$, $b\geq 0$, and $m\geq 1$.  In what follows, it will be useful to consider certain elements of $\overline{\F}_q^\times$.  For a positive integer $n$, we will denote by $\zeta_n$ an element of order $p^n-1$ in $\overline{\F}_q^\times$ and by $\xi_n$ an element of order $p^n+1$ in $\overline{\F}_q^\times$.  In particular, $\zeta_1\in\F_p^\times\subseteq\F_q^\times$, $\zeta_m\in\F_q^\times$, and $\xi_m\in \F_{q^2}^\times\setminus\F_q^\times$.  We also have $\xi_1\in\F_{q^2}^\times$ and further $\xi_1\in\F_q^\times$ if and only if $q$ is a square.

%\textcolor{red}{////jkjkjkjkjk don't need!  $p>3$/////}
%\begin{lemma}\label{lem:exclusionsdef}
%\textcolor{red}{Let $S$ be a simple group of Lie type in the following list:
%\[\{B_n(2) (n>2), G_2(2), B_2(2^i), \tw{2}B_2(2^i), G_2(3^i), F_4(2^i), \tw{2}F_4(2^i), \tw{2}G_2(3^i)\}.\]  Then the conclusion of \prettyref{thm:mainLie} holds for $S$ and its defining characteristic.}
%\end{lemma}
%\begin{comment}
%\begin{proof}
%{If $S=B_n(2)$, $G_2(3)$, or $G_2(2)$, using \cite[Theorem 6.8]{malle07}, \cite[Theorems 2.4, 2.5]{Malle08}, and the degrees listed in \cite[Section 13]{carter2}, we see that there are at least two nontrivial unipotent characters in $\irrp S$ of different degree that extend to $\aut S$. 
%
%///$B_2(2^i)\cong Sp_{4}(2^i)$ use thesis?  Here odd degs are all semisimple
%
%///still need $G_2(3^i) (i>1), F_4(2^i), \tw{2}B_2(2^i), \tw{2}F_4(2^i), \tw{2}G_2(3^i)$ (first two $p'$ chars are exactly semi simples.  last three Suz, Ree have only field automs. look at Brunat2009 On the inductive McKay condition in the defining characteristic)
%////////////////Check...is it true? or also exceptons?}
%\end{proof}
%\end{comment}

\subsubsection{Establishing the Basic Strategy}\label{sec:strategy}

Suppose that $F$ is a Frobenius map.    In what follows, we will often proceed following ideas like those in \cite[Theorem 4.5(4)]{SchaefferFrySN2S1} and \cite[Proposition 6.4]{SFTaylorTypeA} to construct characters of $S=G/Z(G)$ satisfying our desired properties, using characters of $\wt{G}$. %  //// remove these citations ??////
  Namely, if $s$ is a semisimple element of $\wt{G}^\ast$, there exists a unique so-called \emph{semisimple} character $\wt{\chi}_s$ of $p'$-degree associated to the $\wt{G}^\ast$-conjugacy class of $s$, and $\wt{\chi}_s(1)=[\wt{G}^\ast\colon C_{\wt{G}^\ast}(s)]_{p'}$.  If further $s\in[\wt{G}^\ast, \wt{G}^\ast]$, then $\wt{\chi}_s$ is trivial on $Z(\wt{G})$, using \cite[Lemma 4.4]{navarrotiep13}.  Furthermore, the number of irreducible constituents of $\chi :=\wt{\chi}_s|_G$ is exactly the number of irreducible characters $\theta \in \Irr(\widetilde{G}/G)$ satisfying $\widetilde{\chi}_s\theta = \widetilde{\chi}_s$, and we have $\irr{\widetilde{G}/G} = \{\widetilde{\chi}_z \mid z \in Z(\widetilde{G}^{\ast})\}$ and $\mathcal{E}(\widetilde{G}, s)\widetilde{\chi}_z = \mathcal{E}(\widetilde{G}, sz)$ for such $z \in Z(\widetilde{G}^{\ast})$, by \cite[13.30]{dignemichel}. Then $\chi$ is irreducible if and only if $s$ is not $\widetilde{G}^\ast$-conjugate to $sz$ for any nontrivial $z\in Z(\widetilde{G}^\ast)$.  Finally, if $\varphi\in\aut{\wt{G}}$ and $\varphi^\ast\colon \wt{G}^\ast\rightarrow\wt{G}^\ast$ is dual to $\varphi$, then \cite[Corollary 2.4]{NavarroTiepTurullCyclo} tells us that $\chi_s^\varphi=\chi_{\varphi^{\ast}(s)}$.

Therefore, in the context of proving \prettyref{thm:mainLie}, we will be interested in showing that there exist two nontrivial semisimple elements $s_1, s_2\in \wt{G}^\ast$ such that:
\begin{itemize}
\item[(1)] $s_1, s_2$ are contained in $[\wt{G}^\ast, \wt{G}^\ast]$;
\item[(2)] for $i=1,2$, $s_i$ is not conjugate to $s_iz$ for any nontrivial $z\in Z(\wt{G}^\ast)$; 
\item[(3)] the $\wt{G}^\ast$-classes of $s_1$ and $s_2$ are $\aut{\wt{G}^\ast}$-invariant; and
\item[(4)]  $|C_{\wt{G}^\ast}(s_1)|_{p'}\neq |C_{\wt{G}^\ast}(s_2)|_{p'}$. 
\end{itemize}

In the context of \prettyref{thm:mainLieweak}, we will need to replace (3) and (4) with:
\begin{itemize}
\item[(3')] the $\wt{G}^\ast$-class of $s_1$ is $\aut{\wt{G}^\ast}$-invariant and that of $s_2$ is fixed by $p$-elements of $\aut{\wt{G}^\ast}$; and
\item[(4')] $|C_{\wt{G}^\ast}(s_1)|_{p'}\nmid |C_{\wt{G}^\ast}(s_2)|_{p'}$. 
\end{itemize}

\subsubsection{The Proofs in Defining Characteristic}
 \begin{proposition}\label{prop:maindef}
Let $S$ be a simple group of Lie type defined in characteristic $p>3$ not in the list of exclusions of \prettyref{thm:mainLie}.  Then the conclusion of \prettyref{thm:mainLie} holds for $S$ and $p$.
\end{proposition}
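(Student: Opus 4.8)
The plan is to apply, essentially verbatim, the basic strategy set up in \prettyref{sec:strategy}: for each simple group $S$ of Lie type in defining characteristic $p>3$ that is not among the exclusions of \prettyref{thm:mainLie}, it suffices to produce two nontrivial semisimple elements $s_1,s_2\in\wt{G}^\ast$ satisfying conditions (1)--(4). The point specific to defining characteristic is that the associated semisimple characters $\wt{\chi}_{s_i}$ have degree $[\wt{G}^\ast:C_{\wt{G}^\ast}(s_i)]_{p'}$, which is coprime to $p$ regardless of characteristic; hence this is exactly the regime in which the semisimple-character construction yields $p'$-degree characters. Granting (1)--(4), the strategy then produces two nontrivial $\chi_1,\chi_2\in\irrp{S}$ of distinct degrees, each extending to $\aut{S}$ (using the identity $\chi_s^\varphi=\chi_{\varphi^\ast(s)}$ to pass from invariance of the class to invariance of the character). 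Thus the whole proposition reduces to exhibiting such $s_1,s_2$, and the remaining work is type-by-type.

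The main simplification in defining characteristic concerns the invariance condition (3). The field automorphism dual to a field automorphism of $\wt{G}$ raises the eigenvalues of a semisimple element to their $p$-th powers, so the $\wt{G}^\ast$-class of $s$ is field-stable as soon as its multiset of eigenvalues is closed under $\lambda\mapsto\lambda^p$. I would therefore take each $s_i$ in a maximally split torus with coordinates in $\F_p^\times$ (equivalently, of order dividing $p-1$), so that $s_i$ is literally fixed by a generating field automorphism; since $p>3$ we have $|\F_p^\times|\geq 4$, leaving ample room. The cleanest choices are \emph{involutions}: for $p$ odd an involution has eigenvalues $\pm1\in\F_p$, so its class is automatically field-stable, and it equals its own inverse, so it is automatically fixed by the duality/graph automorphisms of types $A_{n-1}$, $D_n$, and $E_6$. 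Condition (1), that $s_i\in[\wt{G}^\ast,\wt{G}^\ast]$, is then arranged by a determinant (type $A$) or spinor-norm (orthogonal types) normalization of the eigenvalue pattern, and condition (2), that $s_i\not\sim s_iz$ for nontrivial $z\in Z(\wt{G}^\ast)$, by choosing the eigenvalue multiplicities non-degenerate under scaling by $Z(\wt{G}^\ast)$ (for an involution, unequal multiplicities of the two eigenvalues).

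To secure condition (4)---distinct, nontrivial degrees---I would select $s_1,s_2$ whose centralizers are reductive subgroups of maximal rank of visibly different type, so that the $p'$-parts $|C_{\wt{G}^\ast}(s_i)|_{p'}$, which are products of cyclotomic values $\Phi_d(q)$, differ; non-centrality of each $s_i$ forces the degree to exceed $1$. For the generic classical groups $A_{n-1}\ (n\geq4)$, $B_n,C_n\ (n\geq3)$, and $D_n\ (n\geq4)$ this is immediate: two involutions $\diag(1^{a},(-1)^{\,n-a})$ with suitably unequal values of $a$ have centralizers that are products of smaller classical groups of different orders, and the corresponding cyclotomic products are visibly distinct. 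For the exceptional groups $G_2,F_4,E_6,\tw{2}E_6,E_7,E_8,$ and $\tw{3}D_4$, I would read two automorphism-stable classes of differing centralizer order off the tables of maximal-rank semisimple centralizers; when only a single involution class is available (as for $G_2$, whose involutions have centralizer of type $A_1A_1$), I would supplement it with an element of order $3$ or dividing $q\pm1$ whose eigenvalue multiset is Frobenius-closed and whose centralizer has type $A_2$, giving a different cyclotomic product.

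The main obstacle is the case analysis for the smallest classical groups just above the excluded range and for the exceptional types, where the supply of automorphism-stable semisimple classes with $\F_p$-rational, inversion-closed, non-degenerate eigenvalues is thin. For $\SL_4^\epsilon$, $\Sp_6$, $\mathrm{Spin}_7$, $\mathrm{Spin}_8^\pm$, $G_2$, and the exceptional groups one must verify that two such classes of genuinely different (and nontrivial) $p'$-centralizer order exist while conditions (1) and (2) hold simultaneously; for instance $\SL_4^\epsilon$ admits essentially one non-central involution, so there I would pair it with a regular element $\diag(\mu,\mu^{-1},\nu,\nu^{-1})$ of the split torus, whose centralizer is a maximal torus. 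The subtlest point is the split type $D_4$, that is $\mathrm{P}\Omega_8^+(q)$, whose outer automorphism group contains the triality group $S_3$, so stability must be checked against a graph group of order $6$ rather than a single involution; there I would restrict to classes whose centralizer type is itself triality-stable, while the twisted $\tw{3}D_4(q)$ is easier since its outer automorphisms are cyclic. Once these finitely many low-rank and exceptional verifications are complete, the generic high-rank pattern dispatches all remaining groups.
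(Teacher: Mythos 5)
Your skeleton is the paper's own (reduce to semisimple elements $s_1,s_2$ of $\wt{G}^\ast$ satisfying (1)--(4) of the strategy section), but your concrete instantiation via involutions has a genuine gap at condition (2), and it is fatal for a whole family of types, not just the small-rank cases you flag. The paper chooses its element $\delta$ with order coprime to $|Z(\bg{G})|$ precisely so that $C_{\bg{G}^\ast}(\iota^\ast(s_i))$ is connected (\cite[Exercise 20.16(c)]{MalleTesterman}), which by \cite[Corollary 2.8(a)]{bonnafe05} forces $\wt{\chi}_{s_i}|_G$ to be irreducible. An involution has order $2$, which divides $|Z(\bg{G})|$ in types $B_\ell$, $C_\ell$, $D_\ell$, and $E_7$, so this mechanism is unavailable, and the failure is not merely hypothetical: take $S=\PSp_{2n}(q)$, $n\geq 3$ (recall only $\PSp_4(q)$ is excluded). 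Every noncentral involution $t$ in the dual side $\mathrm{SO}_{2n+1}$ has centralizer $\mathrm{S}(\mathrm{O}_{2k}\times \mathrm{O}_{2n+1-2k})$ with component group of order $2$, so by Bonnaf\'e the restriction of the associated semisimple character to $\Sp_{2n}(q)$ splits into two constituents --- equivalently, $s\sim sz$ for a nontrivial central $z$, so condition (2) fails for \emph{every} available involution. (The prototype is the degree-$(q+1)$ principal series character of $\GL_2(q)$ attached to the order-$2$ character, which splits in half on $\SL_2(q)$.) Your eigenvalue-multiplicity check for (2) is valid in type $A$, where $\GL$-centralizers are connected, but it does not transfer to the other classical types, where conjugacy of preimages in the simply connected derived group of $\wt{\bg{G}}^\ast$ (the $\mathrm{Spin}$/$\mathrm{GSpin}$ lifting you gesture at with ``spinor-norm normalization'') is exactly where the obstruction lives; your condition (1) verification there is likewise unaddressed.

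The natural repair --- replace involutions by elements of $\F_p^\times$ of odd order --- runs into the second thing you overlooked: when $p-1$ is a power of $2$ (e.g.\ $p=5, 17$), $\F_p^\times$ contains no nontrivial element of order coprime to $|Z(\bg{G})|$ in types $B_\ell,C_\ell,D_\ell,E_7$, so your remark that $|\F_p^\times|\geq 4$ leaves ``ample room'' is false exactly where it matters. The paper handles this by taking $\delta\in\F_{p^2}^\times$ of suitable order dividing $p+1$ and then twisting by a Lang--Steinberg element $g$ with $g^{-1}F^\ast(g)=\dot{w}$ to land back in $\wt{G}^\ast$; this step, together with the uniform coroot construction $s_1'=\prod_i h_{\alpha_i}(\delta)$, $s_2'=h_\beta(\delta)$ (which places the $s_i$ in $[\wt{\bg{G}}^\ast,\wt{\bg{G}}^\ast]$ for free and is visibly graph-stable, including under triality for $D_4$), is the actual content of the defining-characteristic proof and is absent from your proposal. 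Two smaller points: the extension of the constituent $\chi_0$ to $\aut{S}$ is not automatic from class-invariance of $s$; the paper needs Sp\"ath's results \cite[Proposition 3.4, Lemma 2.13]{spath12} to pass from a $D$-invariant $\wt{\chi}\in\irrp{\wt{G}}$ with irreducible restriction to an extension of $\chi_0$ to $\wt{G}\rtimes D$. And the paper sidesteps your exceptional-group case analysis for $G_2(q)$, $F_4(q)$, $\tw{3}D_4(q)$ by quoting uniqueness of certain $p'$-degrees from CHEVIE, which is legitimate for your $G_2$ plan too (there $Z(\bg{G})=1$, so your involution-plus-order-$3$ pairing would in fact work), but the broad claim that the ``generic high-rank pattern dispatches all remaining groups'' is exactly what fails in type $C_n$.
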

\begin{proof}
First assume $S$ is one of $G_2(q), F_4(q),$ or $\tw{3}D_4(q)$. The character degrees in these cases are available at \cite{luebeckwebsite}, and the generic character tables are available in CHEVIE \cite{chevie}.  Here $\aut S/S$ is cyclic generated by a field automorphism, so characters extend to $\aut S$ if and only if they are invariant under $\aut S$.  In the case of $G_2(q)$, there is a unique character of degree $(q^4+q^2+1)$ and a unique character of degree $(q^3+\epsilon)$, where $\epsilon=\pm1$ is such that $q\equiv\epsilon\pmod 6$, so the statement holds for $G_2(q)$.  Similarly, $F_4(q)$ has a unique character of degree $(q^8+q^4+1)$ and a unique character of degree $(q^2+1)(q^4+1)(q^8+q^4+1)$.  Since $\tw{3}D_4(q)$ has a unique character of degree $(q^8+q^4+1)$, it suffices in this case to find another member of $\irrp S$  invariant under field automorphisms.  Taking $k$ to be such that $\gamma^k\in\F_p^\times$, where $\gamma$ generates $\F_q^\times$, this is accomplished by $\chi_{13}(k)$ in CHEVIE notation, which has degree $(q+1)(q^8+q^4+1)$. 

Hence we may assume $S$ is not in the above list, and by \prettyref{sec:alt}, we may assume that $S$ is not isomorphic to an alternating group.  Also note that we may assume $G$ does not have an exceptional Schur multiplier. Let $D\leq\aut{\wt{G}}$ be as in \cite[Notation 3.1]{spath12}, so that $D$ is generated by appropriate graph and field automorphisms and $\aut{G}$ is generated by $D$ and the inner automorphisms of $\wt{G}$.  % (////more details here? or up in notation?////) 

%Let $S=G/Z(G)$ for $G$ a group of Lie type of simply connected type, and let $\wt{G}$ and $D$ be as in as in \cite[Notation 3.1]{spath12}, so $G\lhd \wt{G}$ and $\aut(G)$ is generated by $D$ and the inner automorphisms of $\wt{G}$.   (////more details here later////) 

 Since $[\wt{G}:G]$ is coprime to $p$, %and restrictions from $\wt{G}$ to $G$ are multiplicity-free, by \cite{lusztig?}, 
we see by Clifford theory that the set of members of $\irr{\wt{G}}$ lying above members of $\irrp{G}$ is exactly the set $\irrp{\wt{G}}$.  By \cite[Proposition 3.4]{spath12}, for every $\wt{\chi}\in\irrp{\wt{G}}$, there is a character $\chi_0\in\irr{G|\wt{\chi}}$ such that $(\wt{G}\rtimes D)_{\chi_0}=\wt{G}_{\chi_0}\rtimes D_{\chi_0}$ and $\chi_0$ extends to $G\rtimes D_{\chi_0}$.  Further, if $\wt{\chi}|_G=\chi_0$ and $\wt{\chi}$ is $D$-invariant, it further follows that $\chi_0$ extends to $\wt{G}\rtimes D$, following the proof of \cite[Lemma 2.13]{spath12}, since the factor set constructed there for the projective representation of $(\wt{G}\rtimes D)_{\chi_0}$ extending $\chi_0$ is trivial in this case.

Hence it suffices to show that there exist $D$-invariant $\wt{\chi}_1, \wt{\chi}_2\in\irrp{\wt{G}}$ that are trivial on $Z(G)$, have different degrees, and satisfy that $\wt{\chi}_1|_G$ and $\wt{\chi}_2|_G$ are irreducible.  %We proceed following ideas like those in \cite[Theorem 4.5(4)]{SchaefferFrySN2S1} and \cite[Proposition 6.4]{SFTaylorTypeA}. %  //// remove these citations ??////
In particular, it suffices to find $s_1$ and $s_2$ as described in \prettyref{sec:strategy} satisfying conditions (1)-(4).  For condition (2), we note that by \cite[Corollary 2.8(a)]{bonnafe05}, it suffices to show that $C_{\bg{G}^\ast}(\iota^\ast(s_i))$ are connected, and hence by \cite[Exercise 20.16(c)]{MalleTesterman}, to choose $s_i$ such that $(|s_i|, |Z(\bg{G})|)=1$.

First, suppose that $\bg{G}$ is not of type $A_\ell$.  Let $\Phi$ and $\Delta:=\{\alpha_1, \alpha_2, \cdots, \alpha_\ell\}$ be a system of roots and simple roots, respectively, for $\wt{\bg{G}}^\ast$ with respect to a maximal torus $\wt{\bg{T}}^\ast$, following the standard model described in \cite[Remark 1.8.8]{GLS3}. Note that we may assume that $|\Delta|\geq 3$ if $\Phi$ is type $B_\ell$ or $C_\ell$, $|\Delta|\geq 4$ if $\Phi$ is type $D_\ell$, and otherwise $\Phi$ is type $E_6, E_7,$ or $E_8$.  Further, our assumptions imply that if the Dynkin diagram for $\Phi$ has a nontrivial graph automorphism, then all members of $\Delta$ have the same length and that automorphism has order $2$ unless $\Phi$ is of type $D_4$.  

Given $\alpha\in \Phi$, let $h_\alpha$ denote the corresponding coroot, following the notation of \cite{GLS3}.    Notice that for $\alpha\in \Phi$ and $t\in \overline{\F}_q^\times$, we have $h_\alpha(t)\in [\wt{\bg{G}}^\ast, \wt{\bg{G}}^\ast]$.  (See, for example, \cite[Theorem 1.10.1(a)]{GLS3}.)   Let $\delta\in \F_p^\times$ be such that $|\delta|$ is prime to $|Z(\bg{G})|$, if possible.  Otherwise, we have $\bg{G}$ is type $E_7, B_\ell, C_\ell,$ or $D_\ell$ and $p-1$ is a power of $2$.  (Note that if $\bg{G}$ is type $E_6$, and $p-1$ is a power of $3$, then $p=2$, contradicting our assumption that $p>3$).   In these latter cases, let $\delta$ be an element of $\F^\times_{p^2}$ with order prime to $|Z(\bg{G})|$ dividing $p+1$.       

We define $s_1':=h_{\alpha_1}(\delta)h_{\alpha_2}(\delta)\cdots h_{\alpha_{\ell-1}}(\delta)h_{\alpha_\ell}(\delta)$.  Let $\beta$ be a member of $\Delta$ as follows.  For $\Phi$ of type $E_6, E_7,$ or $E_8$, let $\beta:=\alpha_4$.  If $\Phi$ is of type $C_\ell$ or $D_\ell$, let $\beta:=\alpha_\ell$.  If $\Phi$ is type $B_\ell$, let $\beta:=\alpha_1$.
%described below:
%\begin{center}
%\begin{tabular}{|c|c|c|c|c|c|c|}
%\hline
%Type of $\Phi$ & $B_m$ & $C_m$ &  $D_m$ & $E_6$ & $E_7$ & $E_8$ \\ 
%\hline
% $\beta$ & $\alpha_1$ & $\alpha_m$  & $\alpha_m$ & $\alpha_4$& $\alpha_4$ & $\alpha_4$\\
%\hline
%\end{tabular}
%\end{center}
  Let $s_2':=h_{\beta}(\delta)$ for $\Phi$ not of type $D_\ell$; $s_2':=h_{\alpha_\ell}(\delta)h_{\alpha_{\ell-1}}(\delta)$ for $\Phi$ of type $D_\ell$ with $\ell\geq 5$; and $s_2':=h_{\alpha_1}(\delta)h_{\alpha_3}(\delta)h_{\alpha_4}(\delta)$ for $\Phi$ of type $D_4$.  Note then that for $i=1,2$, $s_i'$ is fixed under graph automorphisms and $(|s_i'|, |Z(\bg{G})|)=1$.   

 If $\delta\in\F_q^\times$, we see that the $s_i'$ are $F^\ast$-fixed, and we write $s_i:=s_i'$.  Otherwise, the elements $\alpha_1+\ldots +\alpha_\ell$ and $\beta$ are members of $\Phi$, and in the case of $D_\ell$, we have $\alpha_\ell+\alpha_{\ell-1}=2e_{\ell-1}$ and for $\ell=4$, $\alpha_1+\alpha_3+\alpha_4=e_1-e_2+2e_3$.  In the first case, let $\dot{w}\in N_{\wt{\bg{G}}^\ast}(\wt{\bg{T}}^\ast)$ induce the corresponding reflection in the Weyl group of $\wt{\bg{G}}^\ast$.  In the case of $\alpha_\ell+\alpha_{\ell-1}$ in type $D_\ell$, we may take $\dot{w}$ to be the product of members of $N_{\wt{\bg{G}}^\ast}(\wt{\bg{T}}^\ast)$ inducing the reflections in the Weyl group of $\wt{\bg{G}}^\ast$  for $\alpha_\ell$ and $\alpha_{\ell-1}$, and similarly for $\alpha_1+\alpha_3+\alpha_4$ in the case of $D_4$. In any case, we have $s_i:=s_i'^g$ is $F^\ast$-fixed, where $g\in \wt{\bg{G}}^\ast$ satisfies $g^{-1}F^\ast(g)=\dot{w}$.  (Note that such a $g$ exists by the Lang-Steinberg theorem.)     Hence $s_1$ and $s_2$ are members of $[\wt{G}^\ast, \wt{G}^\ast]$.  
 
 Further, we have constructed $s_1$ and $s_2$ to be fixed under graph automorphisms and such that $|C_{\wt{G}^\ast}(s_1)|_{p'}\neq |C_{\wt{G}^\ast}(s_2)|_{p'}$. The latter can be seen by analyzing the root information in \cite{GLS3} and using the fact that $C_{\wt{\bg{G}}^\ast}(s_i)$ has root system $\Phi_{s_i}$ where $\Phi_{s_i}$ consists of $\alpha\in\Phi$ with $\alpha(s_i)=1$ (see \cite[Proposition 2.3]{dignemichel}). Let $F_p$ denote a generating field automorphism such that $F_p(h_\alpha(t))=h_\alpha(t^p)$ for $\alpha\in\Phi$ and $t\in\overline{\F}_q^\times$.  Then for $i=1,2$, $s_i'$ is $\wt{\bg{G}}^\ast$-conjugate to $F_p(s_i')$, taking for example $\dot{w}$ as the conjugating element when $s_i'\neq F_p(s_i')$.  Hence $s_i$ is also $\wt{\bg{G}}^\ast$-conjugate to $F_p(s_i)$. Since the $C_{\bg{G}^\ast}(s_i)$ are connected, this yields that the $s_i$ are $\wt{G}^\ast$-conjugate to $F_p(s_i)$, using \cite[(3.25)]{dignemichel}.  Then $s_1$ and $s_2$ are semisimple elements satisfying (1)-(4), as desired.   
 
 Now let $\Phi$ be of type $A_{n-1}$, so that $\wt{{G}}\cong\wt{{G}}^\ast\cong \GL^\epsilon_n(q)$, $G\cong [\wt{G}^\ast, \wt{G}^\ast]=\SL_n^\epsilon(q)$, and $G^\ast\cong \PGL_n^\epsilon(q)$, with $\epsilon\in\{\pm1\}$ and $n\geq 4$.   If $(p, \epsilon)\neq (5, +1)$, let $\delta\in C_{p-\epsilon}$, viewed as a subgroup of $\F_{q^2}$, be such that $\delta=\zeta_1$ in case $\epsilon=1$ and $\delta=\xi_1$ in case $\epsilon=-1$. Note that $|\delta|>4$, from the conditions on $p$.  If $(p, \epsilon)=(5, +1)$, let $\delta$ be an element of order $6$ in $\F_{25}$.  Then in any case, we have $\delta$ chosen such that $|\delta|>4$. 
 
 Recall that the class of a semisimple element in $\GL_n^\epsilon(q)$ is determine by its eigenvalues.  Let $s_1\in \wt{G}^\ast$ have eigenvalues $\{\delta, \delta^{-1}, 1, 1, \ldots, 1\}$ and $s_2$ have eigenvalues $\{\delta, \delta, \delta^{-1}, \delta^{-1}, 1, \ldots, 1\}$.  Then $C_{\wt{G}^\ast}(s_1)\cong \GL^\epsilon_{n-2}(q)\times \GL_1^\epsilon(q)^2$ and $C_{\wt{G}^\ast}(s_2)\cong \GL_{n-4}^\epsilon(q)\times \GL_2^\epsilon(q)^2$, unless $\epsilon=-1$ and $q$ is square or $(p, \epsilon)=(5, +)$ and $q$ is nonsquare, in which case $C_{\wt{G}^\ast}(s_1)\cong \GL^\epsilon_{n-2}(q)\times \GL_1(q^2)$ and $C_{\wt{G}^\ast}(s_2)\cong \GL^\epsilon_{n-4}(q)\times \GL_2(q^2)$.  In any case, we therefore have $|C_{\wt{G}^\ast}(s_1)|_{p'}\neq |C_{\wt{G}^\ast}(s_2)|_{p'}$.     Since the graph automorphism acts via inverse-transpose on $\GL_n(q)$ and the generating field automorphism acts on semisimple elements by raising the eigenvalues to the power of $p$, we see that the $\wt{G}^\ast$-classes of $s_1$ and $s_2$ are each $\aut{\wt{G}^\ast}$-invariant.   Further, as $s_1$ and $s_2$ have determinant $1$, they are contained in $[\wt{G}^\ast, \wt{G}^\ast]=\SL_n^\epsilon(q)$.
 
 Now, in this case, $Z(\wt{G}^\ast)$ is comprised of matrices of the form $\mu\cdot I_n$, where $\mu$ is an element of $C_{q-\epsilon}$, viewed as a subgroup of $\F_{q^2}$.  Since $|\delta|\neq 2$, we see by comparing eigenvalues that $s_1$ cannot be conjugate to $s_1 z$ for any nontrivial $z\in Z(\wt{G}^\ast)$.  Now, if $s_2z$ is conjugate to some $z\in Z(\wt{G}^\ast)$, then there is some $\mu$ as above such that $\delta\mu=\delta^{-1}$ and $\delta^{-1}\mu=\delta$, except possibly if $n=6$, in which case $\delta\mu=1$ is another possibility.  In the latter case, $\mu=\delta^{-1}$, so $s_2z$ has eigenvalues $\{1, 1, \delta^{-2}, \delta^{-2}, \delta^{-1}, \delta^{-1}\}$, so we must have $\delta^{-2}=\delta$, contradicting $|\delta|>4$.  Hence we are in the case $\delta\mu=\delta^{-1}$ and $\delta^{-1}\mu=\delta$.  Then $\mu=\delta^{-2}=\delta^2$, again contradicting $|\delta|>4$. Hence in all cases, we have exhibited the desired elements $s_1, s_2\in\wt{G}^\ast$, and the proof is complete.  
%////then type $A$ with $m\geq2$? using eigenvalue args: $(\delta, \delta, \delta^{-2})$ and $(\delta, \delta^{-1}, 1)$?? no not graph inv...I think $PSL_3(q)$ needs to be added to exception below. also $PSL_4(q)$ when $p=3$? jk $p$ is larger than 3!///
\end{proof}

We remark that the conclusion of \prettyref{thm:mainLie} fails for $\PSL_2(q)$ and $\PSL_3(q)$ when $q$ is a square. Indeed, in these cases, the only option for $\aut{\wt{G}^\ast}$-invariant semisimple classes of $\wt{G}^\ast$ would be comprised of elements with eigenvalues $\{\delta, \delta^{-1}\}$ and $\{\delta, \delta^{-1}, \pm1\}$, respectively, where $\delta^p=\delta$ or $\delta^{-1}$.  If $q$ is a square, both options would yield $\delta\in\F_q$, so all corresponding semisimple characters have the same degree.  %Similarly, in the case of $PSU_3(q)$, invariant semisimple classes in $[\wt{G}^\ast, \wt{G}^\ast]$ would have eigenvalues $\{\delta, \delta^p, 1\}$ with $\delta^{p+1}=1$ or $\{\delta, \delta^p, \delta^{p^2}\}$ with $\delta^{p^2+p+1}=1$, which do not exist when $q$ is square, respectively a power of $3$.
  However, we can show the following:

\begin{lemma}\label{lem:PSL23def}
Let $S\cong \PSL_2(q)$, $\PSL^\epsilon_3(q)$, or $\PSp_4(q)$ with $q$ a power of a prime $p>3$.  Then the conclusion of \prettyref{thm:mainLieweak} holds for $S$ and $p$.  %Then there exist $\chi_1, \chi_2\in\irrp S$ such that $\chi_1$ extends to $\aut S$, $\chi_2$ is invariant under every $p$-subgroup of $\aut S$, and $\chi_2(1)\nmid\chi_1(1)$.  
Moreover, if $q$ is not square and further $p\neq 5$ in the case of $\PSL_2(q)$, then the conclusion of \prettyref{thm:mainLie} holds for $S$ and $p$.
\end{lemma}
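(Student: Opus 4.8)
The plan is to follow the semisimple-character strategy of \prettyref{sec:strategy}, realizing $\wt{G}^\ast$ concretely as $\GL_2(q)$ for $\PSL_2(q)$, as $\GL_3^\epsilon(q)$ for $\PSL_3^\epsilon(q)$, and as the dual group $\wt{G}^\ast$ for $\PSp_4(q)$ with $q$ odd. In each case I would prescribe the eigenvalue multisets of two semisimple elements $s_1,s_2$ and read off $\wt\chi_{s_i}(1)=[\wt{G}^\ast:C_{\wt{G}^\ast}(s_i)]_{p'}$ from the centralizer type. A first reduction: since $p>3$, the diagonal and graph automorphisms of $S$ have order prime to $p$ (and $\PSp_4(q)$ has no graph automorphism for $q$ odd), so a Sylow $p$-subgroup of $\aut{S}$ maps onto the cyclic group generated by $\sigma^m$, where $\sigma\colon x\mapsto x^p$ and $a=p^bm$ with $(m,p)=1$. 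Hence ``$\chi_2$ is invariant under every $p$-subgroup of $\aut{S}$'' is equivalent to ``$\chi_2$ is $\sigma^m$-invariant''. Thus for \prettyref{thm:mainLie} I must secure conditions (1)--(4) of \prettyref{sec:strategy}, whereas for \prettyref{thm:mainLieweak} I only need (1),(2),(3$'$),(4$'$), with the invariance of the class of $s_2$ relaxed from $\aut{\wt{G}^\ast}$ to its $p$-elements.

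I treat $\PSL_2(q)$ as the template. Eigenvalues $\{\delta_i,\delta_i^{-1}\}$ give $\wt\chi_{s_i}(1)=q+1$ when $\delta_i\in\F_q^\times$ (split) and $q-1$ when $\delta_i$ has norm $1$ over $\F_q$ (non-split), and a direct computation shows condition~(2) holds exactly when $|\delta_i|\notin\{1,2,4\}$. For $q$ non-square and $p\geq 7$, I put $\delta_1$ a generator of $\F_p^\times$ (of order $p-1\geq 6$) and $\delta_2$ a generator of $\mu_{p+1}$; as $a$ is odd, $p+1\mid q+1$, so $\mu_{p+1}\subseteq\mu_{q+1}$ while $\mu_{p+1}\cap\F_q^\times=\{\pm1\}$, making $s_2$ genuinely non-split. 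Both classes are $\sigma$-fixed ($\delta_1^p=\delta_1$, $\delta_2^p=\delta_2^{-1}$) and lie in $\SL_2(q)=[\wt{G}^\ast,\wt{G}^\ast]$, with distinct degrees $q\pm1$, so the extension argument of \prettyref{prop:maindef} gives the strong conclusion. This also pinpoints the exclusions: if $q$ is a square then $\mu_{p+1}\subseteq\F_{p^2}\subseteq\F_q$, and both $\delta^p=\delta$ and $\delta^p=\delta^{-1}$ force $\delta\in\F_q$, so every $\aut{\wt{G}^\ast}$-invariant class is split and all such characters have degree $q+1$; and for $p=5$ with $q$ non-square, $\F_p^\times$ has order $4$ and $\mu_{p+1}\cap\F_q=\{\pm1\}$, so no $\sigma$-invariant split class survives condition~(2) and the strong form fails.

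For the weak form I keep the surviving extendable character as $\chi_1$ and only require $\chi_2$ to be $\sigma^m$-invariant. If $q$ is a square, I take $\chi_1$ split of degree $q+1$ from a generator of $\mu_{p+1}\subseteq\F_q^\times$ (order $p+1\geq 6$, valid for all $p>3$), and $\chi_2$ non-split of degree $q-1$ from a generator $\delta_2$ of $\mu_{p^m+1}$: writing $q=(p^m)^{p^b}$ with $p^b$ odd gives $p^m+1\mid q+1$, so $\delta_2$ is non-split with $\sigma^m$-invariant class ($\delta_2^{p^m}=\delta_2^{-1}$), and $q-1\nmid q+1$ gives (4$'$). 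For $\PSL_2(q)$ with $p=5$ and $q$ non-square, I take $\chi_1$ non-split of degree $q-1$ (from a generator of $\mu_6\subseteq\mu_{q+1}$, using $6\mid q+1$); then if $m>1$ I take $\chi_2$ split of degree $q+1$ from $\F_{p^m}^\times$ (of order $p^m-1\geq 24$), while if $m=1$ I let $\chi_2$ be one of the two characters of degree $(q+1)/2$, which are fixed by every field automorphism because $\sigma$ fixes both transvection classes (a non-square stays a non-square under $x\mapsto x^p$). In each subcase $\chi_2(1)\nmid q-1$.

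Finally, $\PSL_3^\epsilon(q)$ and $\PSp_4(q)$ follow the same scheme with adapted patterns. For $\PSL_3^\epsilon$ the decisive point is that the graph (or unitary duality) automorphism sends $s\mapsto s^{-1}$, so an extendable class must have a self-inverse multiset $\{\delta,\delta^{-1},1\}$; the split and non-split choices then yield two distinct degrees (e.g.\ $(q+1)(q^2+q+1)$ and $q^3-1$ for $\epsilon=+1$, and analogously for $\epsilon=-1$), settling the strong form for $q$ non-square. Here the center enters through Bonnaf\'e's criterion $(|s|,\gcd(3,q-\epsilon))=1$, and when $3\nmid q-\epsilon$ the center is trivial and condition~(2) becomes automatic, which is what rescues $p=5$. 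For $\PSp_4(q)$, $q$ odd, there is no graph automorphism, so the set-up mirrors $\PSL_2(q)$: I would choose two odd-order semisimple classes (meeting $(|s|,2)=1$) of distinct centralizer $p'$-order, fully $\sigma$-invariant for $q$ non-square and only $\sigma^m$-invariant (for $\chi_2$) when $q$ is a square, reading the degrees off the generic tables in CHEVIE. I expect the main obstacle to be exactly the square case (and $\PSL_2$ with $p=5$, $m=1$), where the strong form genuinely fails and one must produce a $\chi_2$ invariant only under $\sigma^m$ whose degree does not divide that of $\chi_1$; this rests on the arithmetic $p^m+1\mid q+1$ (with the half-characters as a fallback when $m=1$) together with checking that the restriction to $S$ stays irreducible.
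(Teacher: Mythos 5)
For $\PSL_2(q)$ and $\PSL_3^\epsilon(q)$ your proposal is essentially the paper's proof: the same conditions (1),(2),(3$'$),(4$'$) from \prettyref{sec:strategy}, the same split/non-split eigenvalue prescriptions (the paper's $\zeta_1$, $\xi_1$, $\xi_m$ are exactly your generators of $\F_p^\times$, $\mu_{p+1}$, $\mu_{p^m+1}$), the same reduction of $p$-subgroups of $\aut S$ to the field automorphism $\sigma^m$, and the same fallback to the characters of degree $(q+1)/2$ when $p=5$ and $m=1$. One local misstep there: your ``rescue'' of $p=5$ for $\PSL^\epsilon_3(q)$ via ``$3\nmid q-\epsilon$ makes the center trivial'' is not available, since $3\mid q-\epsilon$ occurs for infinitely many $q$ and $Z(\wt{G}^\ast)\cong C_{q-\epsilon}$ is never trivial; but no rescue is needed, because the direct multiset computation (which is what the paper does) shows that an element with eigenvalues $\{\delta,\delta^{-1},1\}$ is conjugate to no nontrivial central translate as soon as $|\delta|\geq 4$, and $|\zeta_1|=p-1=4$ already suffices at $p=5$.

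The genuine gap is the case $S=\PSp_4(q)$, which you leave as an unexecuted plan, and whose stated prescription would in fact fail. Mirroring the $\PSL_2(q)$ choices is incompatible with your own requirement $(|s|,2)=1$ imposed by $|Z(\bg{G})|=2$: both $p-1$ and $p+1$ are even, and the odd part of $p-1$ can even be trivial (e.g.\ $p=5$ or $p=17$), so there is no odd-order analogue of $\zeta_1$ at all; switching to elements whose order is the odd part of $p+1$ or $p^m+1$ changes the split/non-split type according to the congruence class of $q$, which alters the centralizer structures and hence the degrees --- none of which you compute. In particular, for $\PSp_4(q)$ you never verify condition (2) (equivalently, irreducibility of the restriction to $\Sp_4(q)$ and triviality on the center), the actual pair of non-dividing $p'$-degrees, or the $\sigma$- versus $\sigma^m$-invariance. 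The paper sidesteps precisely this difficulty by abandoning the generic semisimple machinery in this case: it works from the explicit character tables of $\Sp_4(q)$ \cite{srinivasan1968} and $\CSp_4(q)$ \cite{breeding}, taking $\chi_1=\chi_8(k)$ of degree $(q+1)(q^2+1)$ with $\gamma^k=\zeta_1$ and $\chi_2=\chi_6(\ell)$ of degree $(q-1)(q^2+1)$ with $\eta^\ell=\xi_m$ (respectively $\eta^\ell=\xi_1$ for the strong conclusion when $q$ is nonsquare), where containment of $Z(G)$ in the kernel, extension to $\CSp_4(q)$, and invariance under the relevant field automorphisms are read off directly from the tables. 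To close your gap you would need to carry out this verification, or an equivalent explicit computation in the dual group, rather than assert it.
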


\begin{proof}
Note that since $p>3$, the graph and diagonal automorphisms are not in a $p$-subgroup of $\aut S$.   In particular, the only $p$-elements of $D$ are induced from field automorphisms of $p$-power order.  Arguing as in the proof of \prettyref{prop:maindef}, it suffices to show that there are two semisimple elements $s_1, s_2$ of $\wt{G}^\ast$ satisfying conditions (1), (2), (3'), and (4') from \prettyref{sec:strategy}.  Throughout the proof, let $q=p^a$ where $a=p^bm$ with $(m,p)=1$.

(i)   First suppose that $p>5$ and $S\cong \PSL_2(q)$.  Note that since $p>5$, both $\zeta_1$ and $\xi_m$ have order larger than $4$.  Let $s_1\in\wt{G}^\ast$ have eigenvalues $\{\zeta_1, \zeta_1^{-1}\}$, and $s_2$ have eigenvalues $\{\xi_m, \xi_m^{-1}\}$.  Then $s_1$ and $s_2$ satisfy conditions (1), (2), (3'), and (4') from \prettyref{sec:strategy}.   Here $C_{\wt{G}^\ast}(s_1)\cong \GL_1(q)^2$, $C_{\wt{G}^\ast}(s_2)\cong \GL_1(q^2)$, and the corresponding semisimple characters have degrees $\chi_1(1)=q+1$ and $\chi_2(1)=q-1$, respectively. This proves the first statement when $p>5$ and $S\cong \PSL_2(q)$.

(ii) Next let $p=5$ and $S\cong \PSL_2(q)$.   First suppose $m>1$.  Here we may take $s_1\in \wt{G}^\ast$ to have eigenvalues $\{\xi_1, \xi_1^{-1}\}$, and take $s_2$ to have eigenvalues $\{\zeta_m, \zeta_m^{-1}\}$ if $2\nmid m$ and eigenvalues $\{\xi_m, \xi_m^{-1}\}$ if $2|m$.  Then if $2\nmid m$, we have $C_{\wt{G}^\ast}(s_1)\cong \GL_1(q^2)$ and $C_{\wt{G}^\ast}(s_2)\cong \GL_1(q)^2$.  When $2\mid m$, the centralizers of $s_1$ and $s_2$ are reversed.  In either case, however, we obtain $\chi_1$ and $\chi_2$ satisfying the required conditions, as above. 
% (iii) Now let $p=5$ and $S\cong PSL_2(q)$.   First let $m>1$.  Here we may take $s_1\in \wt{G}^\ast$ to have eigenvalues $\{\lambda, \lambda^5\}$, with $|\lambda|=6$ and take $s_2$ to have eigenvalues $\{\lambda, \lambda^{5^m}\}$, with $|\lambda|=p^m-1$ if $2\nmid m$ and $p^m+1$ if $2|m$.  Then if $2\nmid m$, we have $C_{\wt{G}^\ast}(s_1)\cong GL_1(q^2)$ and $C_{\wt{G}^\ast}(s_2)\cong GL_1(q)^2$.  When $2\mid m$, the centralizers of $s_1$ and $s_2$ are reversed.  In either case, however, we have constructed $\chi_1$ and $\chi_2$ satisfying the required conditions, as above.  

 Now let $m=1$, so $q=5^{5^b}$.  Then $q\equiv 1\pmod 4$ and $q\equiv-1\pmod3$.  We obtain an $\aut S$-invariant character $\chi_1$ constructed using $s_1$ as in the case in the previous paragraph. Here $\chi_1(1)=q-1$. Further, we see from the generic character table that there is a character of degree $(q+1)/2$ which is invariant under every field automorphism, completing the proof of the first statement for $PSL_2(q)$.

(iii) Now let $S$ be $\PSL^\epsilon_3(q)$, with $p\geq 5$.  Notice that an element $s\in \wt{G}^\ast$ with  eigenvalues $\{\delta, \delta^{-1}, 1\}$ with $|\delta|\geq 4$ cannot be conjugate to $sz$ for any nontrivial $z\in Z(\wt{G}^\ast)$, as then the sets $\{\delta, \delta^{-1}, 1\}$ and $\{\delta\mu, \delta^{-1}\mu, \mu\}$ are the same for some $\mu\in C_{q-\epsilon}$, yielding $\delta^3=1$.   Note $|\zeta_1|\geq4$ since $p\geq 5$.  Hence we may construct $s_1$ and $s_2$ analogously to case (i) above.  Namely, let $s_1$ have eigenvalues $\{\zeta_1, \zeta_1^{-1}, 1\}$, and let $s_2$ have eigenvalues $\{\xi_m, \xi_m^{-1}, 1\}$.   %Again we see the corresponding semisimple characters of $\wt{G}$ restrict irreducibly to $G$ and are trivial on $Z(\wt{G})$.  The semisimple character corresponding to $s_1$  is invariant under $\aut S$, and that corresponding to $s_2$ is invariant under every $p$-element of $D$.  This yields the desired characters $\chi_1, \chi_2 \in\irrp S=\irr_{p'}(G/Z(G))$.     
Then $s_1$ and $s_2$ satisfy properties (1),(2),(3'),(4') of \prettyref{sec:strategy}.  Here $C_{\wt{G}^\ast}(s_1)\cong \GL_1(q)^3$ in case $\epsilon=1$, and $C_{\wt{G}^\ast}(s_1)\cong \GL_1(q^2)\times \GU_1(q)$ in case $\epsilon=-1$.  Further, $C_{\wt{G}^\ast}(s_2)\cong \GL_1(q^2)\times \GL_1(q)$ in case $\epsilon=1$ and $C_{\wt{G}^\ast}(s_2)\cong \GU_1(q)^3$ in case $\epsilon=-1$.   Hence $\chi_1(1)=(q+1)(q^2+\epsilon q+1)$ and $\chi_2(1)=(q-1)(q^2+\epsilon q+1)$.   %(iii) Now let $S$ be $PSL^\pm_3(q)$, with $p\geq 5$.  Notice that an element $s\in \wt{G}^\ast$ with  eigenvalues $\{\delta, \delta^{-1}, 1\}$ with $|\delta|\geq 4$ cannot be conjugate to $sz$ for any nontrivial $z\in Z(\wt{G}^\ast)$, as then the sets $\{\delta, \delta^{-1}, 1\}$ and $\{\delta\mu, \delta^{-1}\mu, \mu\}$ are the same for some $\mu\in GL^\pm_1(q)$, yielding $\delta^3=1$.   Hence we may construct $s_1$ and $s_2$ analogously to the case $PSL_2(q)$ and $p>5$ above.  Namely, let $s_1$ have eigenvalues $\{\lambda, \lambda^{-1}, 1\}$ where $\lambda^p=\lambda$ and $|\lambda|>3$, which is possible since $p\geq 5$, and let $s_2$ have eigenvalues $\{\lambda, \lambda^{p^m}, 1\}$, where $\lambda\in\F_{q^2}^\times\setminus\F_q^\times$ has order $p^m+1$.   Again we see the corresponding semisimple characters of $\wt{G}$ restrict irreducibly to $G$ and are trivial on $Z(\wt{G})$.  The semisimple character corresponding to $s_1$  is invariant under $\aut S$, and that corresponding to $s_2$ is invariant under every $p$-element of $D$.  This yields the desired characters $\chi_1, \chi_2 \in\irrp S=\irr_{p'}(G/Z(G))$.     Here $C_{\wt{G}^\ast}(s_1)\cong GL_1(q)^3$ in case $S\cong PSL_3(q)$, and $C_{\wt{G}^\ast}(s_1)\cong GL_1(q^2)\times GU_1(q)$ in case $S\cong PSU_3(q)$.  Further, $C_{\wt{G}^\ast}(s_2)\cong GL_1(q^2)\times GL_1(q)$ in case $S\cong PSL_3(q)$ and $C_{\wt{G}^\ast}(s_2)\cong GU_1(q)^3$ in case $S\cong PSU_3(q)$.   Hence $\chi_1(1)=(q+1)(q^2\pm q+1)$ and $\chi_2(1)=(q-1)(q^2\pm q+1)$.   This completes the proof of the first statement.

(iv) Now let $S=\PSp_4(q)$.  The character table of $G=\Sp_4(q)$ and of $\wt{G}=\CSp_4(q)$ are available in \cite{srinivasan1968} and \cite{breeding}, respectively.  From this we see that the characters in the families $\chi_8(k)$ and $\chi_6(\ell)$ with $k\in\{1,...,(q-3)/2\}, \ell\in\{1,....,(q-1)/2\}$ in the notation of \cite{srinivasan1968}, with degrees $(q+1)(q^2+1)$ and $(q-1)(q^2+1)$, respectively, contain $Z(G)$ in the kernel and extend to $\wt{G}$.  Further, comparing notations shows these extensions are invariant under the same field automorphisms as the characters of $G$. Choosing $k$ and $\ell$ such that $\gamma^k=\zeta_1$ and $\eta^\ell=\xi_m$, where $\gamma$ generates $\F_q^\times$ and $\eta$ generates the cyclic group of size $q+1$ in $\F_{q^2}^\times$, we see that $\chi_1:=\chi_8(k)$ and $\chi_2:=\chi_6(\ell)$ satisfy the desired properties.  

(v) Finally, let $q$ be nonsquare, and further assume $p>5$ in the case $S\cong \PSL_2(q)$.   Let $s_1$ be as in part (i) in the case $\PSL_2(q)$ and as in part (iii) for $\PSL_3^\epsilon(q)$.  Let $s_2$ in $\wt{G}^\ast$ have eigenvalues $\{\xi_1, \xi_1^{-1}\}$ or $\{\xi_1, \xi_1^{-1}, 1\}$, respectively. In this case $\xi_1\in\F_{q^2}^\times\setminus\F_q^\times$ has order larger than $4$. These elements satisfy conditions (1)-(4) discussed in \prettyref{sec:strategy}.  In particular, when $S\cong \PSL_2(q)$, the semisimple characters corresponding to $s_1$ and $s_2$ have degrees $q+1$ and $q-1$, respectively.  When $S\cong \PSL^\epsilon_3(q)$, the semisimple characters corresponding to $s_1$ and $s_2$ have degrees $(q+1)(q^2+\epsilon q+1)$ and $(q-1)(q^2+\epsilon q+1)$, respectively.  Then in these cases, as in the proof of \prettyref{prop:maindef}, the conclusion of \prettyref{thm:mainLie} holds.
%(v) Finally, let $q$ be nonsquare, and assume $p>5$ in the case $S\cong PSL_2(q)$.   Let $s_1$ be as in part (i) in the case $PSL_2(q)$ and as in part (iii) for $PSL_3^\pm(q)$.  Let $s_2$ in $\wt{G}^\ast$ have eigenvalues $\{\lambda, \lambda^p\}$ or $\{\lambda, \lambda^p, 1\}$, respectively, where $\lambda\in\F_{q^2}^\times\setminus\F_q^\times$ has order $p+1$, which is necessarily larger than $4$.  In this case, these elements satisfy the conditions discussed in the proof of \prettyref{prop:maindef}.  In particular, when $S\cong PSL_2(q)$, the semisimple characters corresponding to $s_1$ and $s_2$ have degrees $q+1$ and $q-1$, respectively.  When $S\cong PSL^\pm_3(q)$, the semisimple characters corresponding to $s_1$ and $s_2$ have degrees $(q+1)(q^2\pm q+1)$ and $(q-1)(q^2\pm q+1)$, respectively.  Then in these cases, as in the proof of \prettyref{prop:maindef}, the conclusion of \prettyref{thm:mainLie} holds.

Now let $S=\PSp_4(q)$.  Here we let $\chi_1$ be as in (iv), and let $\chi_2$ be the character $\chi_6(\ell)$, where $\ell$ is now chosen so that $\eta^\ell=\xi_1$ is a $p+1$ root of unity in $\F_{q^2}\setminus\F_q$.  Then $\chi_1$ and $\chi_2$ both extend to $\aut S$ and have different degrees, completing the proof.
\end{proof}

\subsection{Non-Defining Characteristic}
 In this section, we address the proofs of Theorems \ref{thm:mainLie} and \ref{thm:mainLieweak} in the case $p\nmid q$.
 
 \begin{proposition}\label{prop:mainnondef}
  Let $S$ be a simple group of Lie type defined over $\F_q$ not in the list of exclusions of \prettyref{thm:mainLie}, and let $p>3$ be a prime dividing $|S|$ but not dividing $q$.  Or let $S=\tw{2}B_2(q^2)$ where $q^2:=2^{2m+1}$ and let $p>3$ be a prime dividing $|S|$ but not dividing $q^2-1$.   Then the conclusion of \prettyref{thm:mainLie} holds for $S$ and $p$.
\end{proposition}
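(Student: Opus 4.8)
The plan is to run exactly the reduction used in the proof of \prettyref{prop:maindef}: by Clifford theory and \cite[Proposition 3.4]{spath12}, together with the factor-set argument of \cite[Lemma 2.13]{spath12}, it suffices to produce two $D$-invariant characters of $\wt{G}$ of degree prime to $p$, each trivial on $Z(G)$, of distinct degrees, and each restricting irreducibly to $G$. The key simplification in non-defining characteristic is that $p$ is \emph{not} the defining prime $r$, so the Steinberg character $\mathrm{St}$ of $G$ is available: it is trivial on $Z(G)$, has degree $q^N$ (a power of $r$, hence prime to $p$), and extends to $\aut{S}$ (a standard fact). Taking $\chi_1:=\mathrm{St}$, it remains only to construct a \emph{single} nontrivial semisimple element $s\in\wt{G}^\ast$ whose associated semisimple character $\wt{\chi}_s$ restricts irreducibly to $G$ and is $D$-invariant and of degree prime to $p$. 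Because $\wt{\chi}_s(1)=[\wt{G}^\ast:C_{\wt{G}^\ast}(s)]_{r'}$ is prime to $r$ while $\chi_1(1)=q^N$ is a power of $r$, the two degrees are automatically distinct; no second centralizer computation is needed, and it suffices to find one $s$ satisfying conditions (1), (2), (3) of \prettyref{sec:strategy} and, in addition, $p\nmid[\wt{G}^\ast:C_{\wt{G}^\ast}(s)]$.

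To meet the new condition $p\nmid[\wt{G}^\ast:C_{\wt{G}^\ast}(s)]$ --- equivalently, that $C_{\wt{G}^\ast}(s)$ contains a full Sylow $p$-subgroup of $\wt{G}^\ast$ --- I would set $e:=e_p(q)$, the multiplicative order of $q$ modulo $p$ (with the usual sign convention for unitary and twisted types, so that $p\mid\Phi_e(q)$), and invoke the theory of Sylow $\Phi_e$-tori \cite{dignemichel}. A Sylow $p$-subgroup of $\wt{G}^\ast$ lies in $N_{\wt{G}^\ast}(\wt{\bg{S}})$ for a Sylow $\Phi_e$-torus $\wt{\bg{S}}$, with abelian part inside $\wt{\bg{S}}^{F^\ast}$; since $p>3$, the contributions of the factors $\Phi_{ep^j}(q)$ to $|\wt{G}^\ast|_p$ are well controlled. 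I would therefore take $s$ to be an element of order $p$ inside $\wt{\bg{S}}^{F^\ast}$, so that $C_{\wt{\bg{G}}^\ast}(s)\supseteq\wt{\bg{S}}$ carries the whole $p$-part and $\wt{\chi}_s$ has $p'$-degree (enlarging the $e$-split Levi $\wt{\bg{L}}:=C_{\wt{\bg{G}}^\ast}(\wt{\bg{S}})$ to absorb a possible $\Phi_{ep}$-contribution to $|\wt{G}^\ast|_p$ where required). Membership of $s$ in $[\wt{\bg{G}}^\ast,\wt{\bg{G}}^\ast]$ (condition (1)) is then secured by expressing $s$ through coroots $h_\alpha(\delta)$ as in \prettyref{prop:maindef}, now with $\delta$ of order $p$ in $\overline{\F}_q^\times$.

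Condition (2), that $s$ be non-conjugate to $sz$ for every nontrivial $z\in Z(\wt{G}^\ast)$, follows via \cite[Corollary 2.8(a)]{bonnafe05} and \cite[Exercise 20.16(c)]{MalleTesterman} once $C_{\bg{G}^\ast}(\iota^\ast(s))$ is connected, which holds whenever $(|s|,|Z(\bg{G})|)=1$; with $|s|=p$ this reduces to $p\nmid|Z(\bg{G})|$, the finitely many offending configurations (for example $p\mid n$ in type $A_{n-1}$ with $e=1$) being handled by choosing instead an element of $\wt{\bg{S}}^{F^\ast}$ of order prime to $|Z(\bg{G})|$. For condition (3) the diagonal automorphisms act by inner conjugation of $\wt{G}^\ast$ and so fix every class; the graph automorphism fixes the class of $s$ because $\wt{\bg{S}}$ and the coroot expression can be taken graph-stable; and field-invariance is obtained from the fact that the relative Weyl group $N_{\wt{G}^\ast}(\wt{\bg{S}})/\wt{\bg{L}}^{F^\ast}$ realizes the $q$-power map on $\wt{\bg{S}}^{F^\ast}$, so that the field automorphism carries $s$ to a $\wt{G}^\ast$-conjugate (twisting by a Lang--Steinberg element $\dot{w}$ where needed, exactly as in \prettyref{prop:maindef}). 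The group $\tw{2}B_2(q^2)$ is treated separately: here $p\mid|S|$ and $p\nmid q^2-1$ force $p\mid q^4+1$, the Sylow $p$-subgroup is cyclic and lies in one of the two cyclic Suzuki tori of orders $q^2\pm\sqrt{2}\,q+1$, and the known character table furnishes $\mathrm{St}$ together with a further $p'$-degree character of different degree, both extending to the cyclic automorphism overgroup.

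The step I expect to be the main obstacle is condition (3): ensuring, uniformly across all types and all admissible $e=e_p(q)$, that the $\wt{G}^\ast$-class of the chosen $p$-element $s$ is simultaneously stable under all field and graph automorphisms, i.e. that the relative Weyl group of the Sylow $\Phi_e$-torus acts with enough symmetry on $\wt{\bg{S}}^{F^\ast}$ to absorb the $q$-power map. This is most delicate when $\wt{\bg{S}}$ is a small (e.g. cyclic Coxeter) torus with a small relative Weyl group, and in the unitary and twisted types, where the interaction between the sign $\epsilon$, the field of definition, and the action on eigenvalues must be tracked with care. A secondary difficulty is the accounting of $|\wt{G}^\ast|_p$ when several cyclotomic factors $\Phi_{ep^j}(q)$ are divisible by $p$, which is what forces the occasional enlargement of $\wt{\bg{L}}$ beyond $C_{\wt{\bg{G}}^\ast}(\wt{\bg{S}})$.
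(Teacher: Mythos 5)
Your proposal takes a genuinely different route from the paper and has a genuine gap at precisely the step you yourself flag as the expected obstacle, namely condition (3). Field automorphisms of $\wt{G}^\ast$ are generated by the $r$-power map on semisimple classes, where $r$ is the defining prime and $q=r^a$, whereas the relative Weyl group of a Sylow $\Phi_e$-torus realizes at most the $q$-power map together with reflections; since $\langle q\rangle$ is a proper subgroup of $\langle r\rangle$ modulo $p$ whenever $a>1$, your chosen class is in general not field-stable. Concretely, take $S=\PSL_4(q)$ with $q=r^2$ and $p\mid q+1$ (so $e=2$; this case lies inside the proposition's scope, as $\PSL_4$ is not excluded). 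An element of order $p$ of the Sylow $\Phi_2$-torus built from a single coroot expression as in \prettyref{prop:maindef} has eigenvalues $\{\delta,\delta^{-1},1,1\}$ with $|\delta|=p$, and the generating field automorphism sends its class to the one with eigenvalues $\{\delta^{r},\delta^{-r},1,1\}$; these are not $\wt{G}^\ast$-conjugate, because $p\mid r^2+1$ forces $p\nmid r^2-1$, and the relative Weyl group $C_2\wr S_2$ acts only by inversions and swaps, so it cannot repair the discrepancy. The standard repair is to take the eigenvalue multiset to be a full $\langle r\rangle$-orbit, e.g.\ $\{\delta,\delta^{r},\delta^{-1},\delta^{-r}\}$, and then to recheck conditions (1), (2), (4) and the $p$-part of the new centralizer; none of this appears in your proposal, and in types other than $A$ it requires further case-by-case constructions. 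Two smaller points: \cite[Proposition 3.4]{spath12} is a statement about the defining prime, so in non-defining characteristic you may only borrow the factor-set argument of \cite[Lemma 2.13]{spath12} (which, fortunately, suffices when the restriction to $G$ is irreducible); and $[\wt{G}:G]$ need not be prime to $p$ here (e.g.\ $p\mid q-\epsilon$ in type $A$), so the Clifford-theoretic preamble of the defining-characteristic proof cannot be quoted verbatim.

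The paper sidesteps all of this by never leaving the unipotent characters: it takes $\chi_1=\mathrm{St}_S$, just as you do, but finds $\chi_2$ as a second unipotent character of $p'$-degree, read off from explicit degree lists (Table \ref{tab:unipclassical} for classical types, and \cite[Section 13.9]{carter2} for the exceptional types, the Suzuki and Ree groups, and $\tw{3}D_4(q)$). By \cite[Theorems 2.4 and 2.5]{Malle08}, unipotent characters extend to their inertia groups and are $\aut S$-invariant outside a short explicit list of exceptions, so invariance and extension come for free, and \cite[Lemma 5.2]{malle07} shows that $p>3$ cannot divide both tabulated degrees simultaneously. The only failures of this scheme are $\tw{2}B_2(q^2)$ and $\tw{2}G_2(q^2)$ with $p\mid q^2-1$: the first is excluded by the hypothesis $p\nmid q^2-1$, and the second is handled by the unique character of degree $q^4-q^2+1$. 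Your Suzuki paragraph is consistent with this but underspecified (you do not say which $p'$-degree character you take nor why it is invariant); under $p\nmid q^2-1$, hence $p\mid q^4+1$, the cuspidal unipotent characters already do the job through the same machinery. In sum, the statement is true and your architecture is salvageable, but as written the field-stability step is asserted on the basis of a claim (the relative Weyl group realizing the required power map) that is false in general, and the paper's unipotent-character argument is both shorter and avoids the semisimple-class bookkeeping entirely.
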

\begin{proof}
By \cite[Theorem 2.4]{Malle08}, every unipotent character of $S$ extends to its inertia group in $\aut S$, so it suffices to find two unipotent characters with different degrees in $\irrp S$ that are invariant under $\aut S$.  Since the Steinberg character $\mathrm{St}_S$ is one such character, we aim to exhibit another nontrivial unipotent character of $p'$-degree invariant under $\aut S$.  

Further, \cite[Theorem 2.5]{Malle08}, yields that every unipotent character of $S$ is invariant under $\aut S$ unless $S$ is a specifically stated exception for one of $D_n(q)$ with $n$ even, $B_2(q)$ with $q$ even, $G_2(q)$ with $q$ a power of $3$, or $F_4(q)$ with $q$ even.  

The unipotent characters of classical groups are indexed by partitions in case of type $A_{n-1}$ and $\tw{2}A_{n-1}$ and by ``symbols" in the other types.  Discussions of these symbols and the corresponding character degrees are available in \cite[Section 13.8]{carter2}. In \prettyref{tab:unipclassical}, we list two unipotent characters for each classical type that extend to $\aut S$ by \cite[Theorems 2.4 and 2.5]{Malle08}.  Further, $p>3$ cannot divide the degree of both characters listed simultaneously, which can be seen, for example, by an application of \cite[Lemma 5.2]{malle07}.  Hence taking $\chi_1=\mathrm{St}_S$ and $\chi_2$ the character listed whose degree is not divisible by $p$, the desired statement holds in the case of classical types.

\begin{table}\begin{center}
\caption{}\label{tab:unipclassical}
\small
\begin{tabular}{|c|c|c|}\hline
Type & Partition/Symbol indexing $\chi$ & $\chi(1)_{q'}$\\
\hline
$A_{n-1}, n\geq 4$ & $(1, n-1)$ & ${\frac{q^{n-1}-1}{q-1}}$ \\

& $(2, n-2)$ & ${\frac{(q^n-1)(q^{n-3}-1)}{(q-1)(q^2-1)}}$\\
\hline
$\tw{2}A_{n-1}, n\geq 4$ &  $(1, n-1)$ & ${\frac{q^{n-1}-(-1)^{n-1}}{q+1}}$ \\ 

&  $(2, n-2)$ &  ${\frac{(q^n-(-1)^{n})(q^{n-3}-(-1)^{n-3})}{(q+1)(q^2-1)}}$\\
\hline
$B_n$ or $C_n$, $n\geq 3$ or $n=2$ and $q$ odd &  $1 \hbox{ } n \choose 0$ & ${\frac{(q^{n-1}-1)(q^n+1)}{2(q-1)}}$ \\

&  $0 \hbox{ } n \choose 1$ & ${\frac{(q^{n-1}+1)(q^n-1)}{2(q-1)}}$\\
\hline
$B_2$ with $q$ a power of $2$ &  $0 \hbox{ } 1 \hbox{ } 2 \choose -$ & $(q-1)^2/2$ \\ 

&  $0 \hbox{ } 2 \choose 1$ &  $(q+1)^2/2$ \\
\hline
  $D_n, n\geq 5$ & $n-1 \choose 1$ & ${\frac{(q^{n}-1)(q^{n-2}+1)}{q^2-1}}$ \\ 

  &  $1 \hbox{ } n \choose 0 \hbox{ } 1$ & ${\frac{(q^{n-1}+1)(q^{n-1}-1)}{q^2-1}}$ \\
  \hline
  $D_4$& $1 \hbox{ } 3 \choose 0 \hbox{ } 2$ & ${\frac{(q+1)^3(q^3+1)}{2}}$ \\ 

  &  $1 \hbox{ } 2 \choose 0 \hbox{ } 3$ &  ${\frac{(q^2+1)^2(q^2+q+1)}{2}}$ \\
  \hline
$\tw{2}D_n, n\geq4$ & $1 \hbox{ } n-1 \choose - $ & ${\frac{(q^{n}+1)(q^{n-2}-1)}{q^2-1}}$ \\ 

&  $0 \hbox{ } 1 \hbox{ } n\choose 1$ &  ${\frac{(q^{n-1}+1)(q^{n-1}-1)}{q^2-1}}$\\
\hline
\end{tabular}\end{center}
\end{table}
\normalsize

Similarly, for groups of exceptional type, Suzuki and Ree groups, and $\tw{3}D_4(q)$, by observing the explicit list of unipotent character degrees in \cite[Section 13.9]{carter2}, we see that there is likewise always a second unipotent character with degree not divisible by $p$, except in the case of $\tw{2}B_2(q^2)$ or $\tw{2}G_2(q^2)$ with $p|(q^2-1)$.  Further, these can again be chosen not to be one of the exceptions listed in \cite[Theorem 2.5]{Malle08}.  When $S=\tw{2}G_2(q^2)$ and $p\mid(q^2-1)$, we may consider instead the unique character of degree $q^4-q^2+1$, which must be invariant under $\aut S$, and hence extends since $\aut S/S$ is cyclic.  
%Let $G$ be a group of Lie type of simply connected type such that $S=G/Z(G)$.  Let $e$ be the multiplicative order of $q$ modulo $p$ if $p$ is odd and modulo $4$ if $p=2$.  Let $\bf{S}_e$ be a Sylow $\Phi_e$-torus of $G$ and $L:=C_{G}(\bf{S}_e)$.  By \cite[Corollary 6.6 and Proposition 7.3]{malle07}, unipotent characters of $G$ of $p'$-degree are in bijection with pairs $(\la, \phi)$, up to $W_{G}(L)$-conjugacy, where $\la \in\irr_{p'}(L)$ is a unipotent character and $\phi\in\irr_{p'}(W_{G}(L,\la))$.  Note that the relative Weyl group $W_G(L,\la)$ is a finite complex reflection group.
%///argue that we can always find a pair $(\la, \phi)$ such that the corresponding character is neither $1_G$ nor $\mathrm{St}_G$, and not the cases excluded in \cite[Theorem 2.5]{Malle08}///  e.g. choose $(1, \phi)$ with $\phi\neq 1$? really suffices to argue that there are at least 3 distinct pairs, so one must be nontrivial, nonsteinberg
 \end{proof}

Together, note that Propositions \ref{prop:maindef} and \ref{prop:mainnondef} prove \prettyref{thm:mainLie}.
%We remark that \cite[Theorem 6.8]{malle07} implies that except in a small number of cases, there is exactly one unipotent character of degree prime to $p$ when $p$ is the defining characteristic of $S$, and hence the strategy used in \prettyref{prop:mainnondef} above will not work in general when $p|q$.
We also note the following, which follows from the proof of \prettyref{lem:PSL23def} and ideas from \prettyref{prop:mainnondef}, together with the fact that $\PSL_3^\epsilon(q)$ has a unipotent character of degree $q(q+\epsilon)$.
\begin{lemma}\label{lem:PSL23}
Let $p>3$ be a prime and let $S\cong \PSL_2(q)$ or $\PSL^\epsilon_3(q)$ be simple with $q$ a power of a prime $r>3$, where $r\neq p$. Then the conclusion of \prettyref{thm:mainLieweak} holds for $S$ and $p$.  % Then there exist $\chi_1, \chi_2\in\irrp S$ such that $\chi_1$ extends to $\aut S$, $\chi_2$ is invariant under every $p$-subgroup of $\aut S$, and $\chi_2(1)\nmid\chi_1(1)$.  
Moreover, the conclusion of \prettyref{thm:mainLie} holds for $S$ and $p$ in the following situations:
\begin{itemize}

\item $S=\PSL_2(q)$, $r>5$, and $p\nmid(q+1)$ or $q$ is not square;
%\item $S=PSL_2(q)$, $r=5$, and /////conditions!!!/////
\item $S=\PSL_3(q)$ and $p\nmid (q+1)$ or $q$ is not square;
\item $S=\PSU_3(q)$.
\end{itemize}
\end{lemma}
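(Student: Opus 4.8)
The plan is to run the semisimple-element strategy of \prettyref{sec:strategy} in parallel with the unipotent-character argument of \prettyref{prop:mainnondef}, anchored by the Steinberg character. Since $p\nmid q$ and $p>3$, the diagonal and graph automorphisms of $S$ have order prime to $p$, so every $p$-subgroup of $\aut S$ is conjugate into the (cyclic) group of field automorphisms; writing $\phi_0$ for a generator of its Sylow $p$-subgroup, ``invariant under every $p$-group of $\aut S$'' is equivalent to ``$\phi_0$-invariant''. The character $\mathrm{St}_S$ is unipotent of degree a power of $q$, hence lies in $\irrp S$, is $\aut S$-invariant (none of the exceptions of \cite[Theorem 2.5]{Malle08} occur for $\PSL_2$ or $\PSL^\epsilon_3$), and extends to $\aut S$ by \cite[Theorem 2.4]{Malle08}. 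I will always take $\chi_1=\mathrm{St}_S$; then, because $\mathrm{St}_S(1)$ is a power of $r$, any $\chi_2$ whose degree is not a power of $r$ automatically satisfies $\chi_2(1)\nmid\chi_1(1)$, so conditions (4) and (4') come for free.

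For \prettyref{thm:mainLieweak} I then only need a nontrivial $\phi_0$-invariant $\chi_2\in\irrp S$ of non-$r$-power degree. For $\PSL^\epsilon_3(q)$ with $p\nmid(q+\epsilon)$ I take $\chi_2$ to be the unipotent character of degree $q(q+\epsilon)$, which is $p'$, $\aut S$-invariant, and of degree $\nmid q^3$. When $p\mid(q+\epsilon)$ a short congruence shows that exactly one of the two semisimple degrees $(q\pm1)(q^2+\epsilon q+1)$ is prime to $p$, and I build the corresponding semisimple character from an element with eigenvalues $\{\delta,\delta^{-1},1\}$ (determinant one, so condition (1) holds), choosing $\delta\in\F_{q_0}^\times$ to make it $\phi_0$-invariant and $|\delta|\geq4$ to secure condition (2) via the remark in the proof of \prettyref{lem:PSL23def}. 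For $\PSL_2(q)$, where $\mathrm{St}_S$ is the only nontrivial unipotent character, I use instead a semisimple character of degree $q+1$ or $q-1$ (whichever is coprime to $p$) realized by a $\phi_0$-invariant $\delta$; in the residual small cases (essentially $q_0=5$) where the relevant torus carries no usable $\delta$, I fall back on the two exceptional characters of degree $(q\pm1)/2$, which the odd-order group $\langle\phi_0\rangle$ must fix since it cannot interchange exactly two objects.

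For the \emph{moreover} part (\prettyref{thm:mainLie}) I need $\chi_2$ to be $\aut S$-invariant (condition (3)), so that it extends through the Clifford-theoretic machinery recalled in the proof of \prettyref{prop:maindef}. When $p\nmid(q+\epsilon)$ I pair $\mathrm{St}_S$ with the unipotent character of degree $q(q+\epsilon)$ (both unipotent, hence $\aut S$-invariant and extending); this settles $\PSL_3(q)$ and $\PSU_3(q)$ whenever $p\nmid(q+\epsilon)$, and all of $\PSL_2(q)$ with $p\nmid(q+1)$ after replacing the missing intermediate unipotent character by the degree-$(q+1)$ semisimple character coming from $\delta\in\F_r^\times$ of order $>2$ and $\neq4$, which exists exactly when $r>5$ (this is what condition (2) forces for the $2$-dimensional pattern). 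When $p\mid(q+\epsilon)$ I must produce an $\aut S$-invariant semisimple class of $p'$-degree; for $\PSL_2$ and $\PSL_3$ this forces the pattern $\{\delta,\delta^{-1}\}$ (resp.\ $\{\delta,\delta^{-1},1\}$) with $\delta$ of order dividing $r+1$ and $\delta\notin\F_q$, which exists only when $q$ is nonsquare, whence the hypothesis. The unconditional statement for $\PSU_3(q)$ is due to the fact that here the $p'$ degree $(q+1)(q^2-q+1)$ (the relevant one when $p\mid(q-1)$) is realized by the centralizer $\GL_1(q^2)\times\GU_1(q)$ arising from $\delta\in\F_q^\times$ of order dividing $q-1$; thus I may take $\delta\in\F_r^\times$ of order $r-1\geq4$, giving an $\aut S$-invariant class satisfying condition (2) with no squareness assumption.

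The main obstacle is enforcing $\aut S$-invariance (condition (3)) and irreducibility of the restriction to $S$ (condition (2)) simultaneously on a semisimple class of $p'$-degree distinct from $\mathrm{St}_S(1)$: these requirements pull against each other precisely in the boundary cases. The decisive subtlety is the contrast between $\PSL_3$ and $\PSU_3$, namely whether the $\aut S$-invariant class of the correct centralizer type is furnished by an eigenvalue $\delta\in\F_q$ or only by one in $\F_{q^2}\setminus\F_q$; this is exactly what renders the unitary case unconditional while the linear cases require $q$ nonsquare (and $r>5$ for $\PSL_2$). For the weak statement the same tension is present but much milder, since only $\phi_0$-invariance is demanded, and it is defused in the remaining small cases by the exceptional characters of $\PSL_2(q)$.
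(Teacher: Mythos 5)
Your proposal is correct and matches the paper's intended argument: the authors prove this lemma by exactly the combination you describe, namely $\chi_1=\mathrm{St}_S$ together with either the unipotent character of degree $q(q+\epsilon)$ or a semisimple character with eigenvalue pattern $\{\delta,\delta^{-1}\}$ (resp.\ $\{\delta,\delta^{-1},1\}$) of order $r^m\pm1$ or $r\pm1$ chosen to secure conditions (1)--(4)/(3'),(4') of \prettyref{sec:strategy}, just as in the proofs of \prettyref{lem:PSL23def} and \prettyref{lem:SL2323}. Your extra fallback to the characters of degree $(q\pm1)/2$ is in fact vacuous here (if $r^m=5$ and $p\mid q-1$ then the order of $r$ modulo $p$ divides $\gcd(p^b,p-1)=1$, forcing $p\mid4$), but it is harmless and does not affect correctness.
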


To complete the proof of \prettyref{thm:mainLieweak}, we need to consider $\tw{2}B_2(2^{2n+1})$, $\PSL_2(q)$, and $\PSL_3^\epsilon(q)$ when $q$ is a power of $2$ or $3$ and $p\nmid q$. These are treated in the next two Lemmas.

\begin{lemma}\label{lem:SL2323}
Let $p>3$ be a prime and let $S\cong \PSL_2(q)$ or $\PSL^\epsilon_3(q)$ be simple with $q$ a power of $2$ or $3$.  Then the conclusion of \prettyref{thm:mainLieweak} holds for $S$ and $p$.  %Then there exist $\chi_1, \chi_2\in\irrp S$ such that $\chi_1$ extends to $\aut S$, $\chi_2$ is invariant under every $p$-subgroup of $\aut S$, and $\chi_2(1)\nmid\chi_1(1)$.  
%/////Are there nice conditions in this case for 1.1 holding???///// possibly combine with 1.8////
\end{lemma}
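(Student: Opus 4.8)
The plan is to run the semisimple-character strategy of \prettyref{sec:strategy}, now with the defining characteristic $r\in\{2,3\}$ distinct from $p$, so that we work entirely in non-defining characteristic. Since $p\neq r$ divides $|S|$, it divides one of $q-1$, $q+1$, or, in the $\PSL_3^\epsilon(q)$ case, $q^2+\epsilon q+1$. For $\chi_1$ I would simply take the Steinberg character $\mathrm{St}_S$: it is a nontrivial unipotent character whose degree is a power of $r$ (hence prime to $p$), it is $\aut S$-invariant, and it extends to $\aut S$ by \cite[Theorem 2.4]{Malle08} (as used in \prettyref{prop:mainnondef}). Because $\mathrm{St}_S(1)$ is an $r$-power, it then suffices to produce a nontrivial $\chi_2\in\irrp S$ that is invariant under every $p$-subgroup of $\aut S$ and whose degree is prime to $r$; such a degree is automatically larger than $1$ and cannot divide the $r$-power $\mathrm{St}_S(1)$, so $\chi_2(1)\nmid\chi_1(1)$ holds for free.

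To build $\chi_2$, following \prettyref{lem:PSL23def}, I would take a semisimple element $s_2\in[\wt{G}^\ast,\wt{G}^\ast]$ whose eigenvalues have order exactly $p$, chosen in the maximal torus attached to the cyclotomic factor that $p$ divides: for $\PSL_2(q)$, eigenvalues $\{\zeta,\zeta^{-1}\}$ with $|\zeta|=p$ if $p\mid q-1$ and $\{\xi,\xi^{-1}\}$ with $|\xi|=p$ if $p\mid q+1$; for $\PSL_3^\epsilon(q)$, the analogous elements with eigenvalues $\{\zeta,\zeta^{-1},1\}$ or $\{\xi,\xi^{-1},1\}$, together with a Singer-type element with eigenvalues $\{\omega,\omega^q,\omega^{q^2}\}$, $|\omega|=p$, when $p\mid q^2+\epsilon q+1$. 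In each case $\det s_2=1$, so (1) holds and the associated semisimple character is trivial on $Z(\wt{G})$ by \cite[Lemma 4.4]{navarrotiep13}. The centralizer $C_{\wt{G}^\ast}(s_2)$ is the corresponding maximal torus, which contains a Sylow $p$-subgroup of $\wt{G}^\ast$; hence $\chi_2(1)=[\wt{G}^\ast\colon C_{\wt{G}^\ast}(s_2)]_{r'}$ is prime to both $p$ and $r$. Since $|s_2|=p>4$, the eigenvalue computations of \prettyref{lem:PSL23def} show that $s_2$ is not $\wt{G}^\ast$-conjugate to $s_2z$ for any nontrivial $z\in Z(\wt{G}^\ast)$, giving (2), so $\wt{\chi}_{s_2}|_G$ is irreducible and descends to the desired $\chi_2\in\irr S$.

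The crux is the invariance of $\chi_2$ under the $p$-subgroups of $\aut S$. As $p>3$, the diagonal and graph automorphisms have order prime to $p$, so every $p$-element of $\aut S$ is, modulo inner automorphisms, a field automorphism of $p$-power order, i.e.\ a power of $\phi^m\colon x\mapsto x^{r^m}$, where $a=p^bm$ with $(m,p)=1$. Using $\chi_s^\varphi=\chi_{\varphi^\ast(s)}$ from \cite[Corollary 2.4]{NavarroTiepTurullCyclo}, I would invoke the congruence $q=r^a\equiv r^m\pmod p$, valid by Fermat's little theorem since $r^{p^b}\equiv r\pmod p$. It shows that $\phi^m$ acts on the order-$p$ eigenvalues of $s_2$ exactly as the Frobenius $x\mapsto x^q$ does; since the $\wt{G}^\ast$-class of $s_2$ is $F$-stable it is therefore $\phi^m$-stable. (In the Singer case one uses in addition that $q^3\equiv1\pmod p$ to see that $\{\omega,\omega^q,\omega^{q^2}\}$ is preserved.) Thus $\chi_2$ is fixed by every field automorphism of $p$-power order, and so by every $p$-subgroup of $\aut S$.

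I expect this last step --- verifying condition (3$'$) for the Sylow $p$-subgroup of the field-automorphism group --- to be the main obstacle; the congruence $q\equiv r^m\pmod p$ is what makes it go through uniformly and explains why eigenvalues of order $p$ are the correct choice here, in contrast with the defining-characteristic argument of \prettyref{lem:PSL23def}. Finally, I would dispose of the groups excluded by simplicity or isomorphism conventions: $\PSL_3(2)\cong\PSL_2(7)$ is covered by \prettyref{lem:PSL23}, $\PSU_3(2)$ is not simple, and the remaining small groups (such as $\PSL_2(4)\cong A_5$ and $\PSL_2(9)\cong A_6$) either satisfy the construction above verbatim or can be checked directly from their character tables.
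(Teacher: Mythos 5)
Your proposal is correct in outline and shares the paper's skeleton: take $\chi_1=\mathrm{St}_S$ (which extends to $\aut S$ by \cite[Theorem 2.4]{Malle08}), observe that since $p>3$ the $p$-elements of $\aut S$ are, modulo inner-diagonal automorphisms, field automorphisms of $p$-power order, and produce $\chi_2$ as a semisimple character verifying conditions (1), (2), (3'), (4') of \prettyref{sec:strategy}. But your construction of $s_2$ is genuinely different from the paper's. The paper splits into the case $m=1$ (where it shows $p\nmid q^2-1$, and then uses the unipotent character of degree $q^2+\epsilon q$ for $\PSL_3^\epsilon(q)$, a character of degree $(q-1)/2$ for $\PSL_2(q)$ with $r=3$, and a semisimple element with eigenvalues of order $3$ for $\PSL_2(q)$ with $r=2$) and the case $m>1$ (where it takes eigenvalues of order $r^m\pm1$, so that $\delta^{r^m}=\delta^{\pm1}$ makes invariance under $\phi^m$ transparent, and then chooses between the resulting degrees $q\mp 1$, resp.\ $(q-\epsilon)(q^2+\epsilon q+1)$, according to which is prime to $p$). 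Your choice of eigenvalues of order exactly $p$, combined with the Fermat congruence $q=r^{p^bm}\equiv r^m\pmod p$, is uniform: it needs no case split on $m$, yields $p'$-degree automatically because the centralizing torus contains a Sylow $p$-subgroup of $\wt{G}^\ast$, and gives $\chi_2(1)\nmid\chi_1(1)$ for free since $\chi_2(1)>1$ is prime to $r$ while $\mathrm{St}_S(1)$ is an $r$-power. One point of hygiene: ``modulo inner automorphisms'' should read ``modulo inner-diagonal automorphisms''; the invariance of $\chi_2$ under the diagonal part is exactly what your condition (2) (irreducibility of $\wt{\chi}_{s_2}|_G$) supplies, so this should be said explicitly.

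There is one concrete error, confined to $S\cong\PSU_3(q)$ with $p\mid q^2-q+1$. An element of $\wt{G}^\ast=\GU_3(q)$ has eigenvalue multiset stable under $x\mapsto x^{-q}$, and here $q^3\equiv -1\pmod p$ rather than $q^3\equiv 1\pmod p$; consequently your multiset $\{\omega,\omega^{q},\omega^{q^2}\}$ with $|\omega|=p$ is not realized by any element of $\GU_3(q)$, and it has determinant $\omega^{1+q+q^2}=\omega^{2q}\neq1$, so (1) fails as written. The correct regular element of the torus of order $q^2-q+1$ has eigenvalues $\{\omega,\omega^{-q},\omega^{q^2}\}$, which is stable under $x\mapsto x^{-q}$ and has determinant $\omega^{1-q+q^2}=1$. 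Relatedly, in the unitary case the field-automorphism group has order $2a$, so $\phi^m$ has order $2p^b$ and is not itself a $p$-element; the Sylow $p$-subgroup is generated by $\phi^{2m}$, which acts on order-$p$ eigenvalues as $x\mapsto x^{q^2}$, and stability under this map follows by applying $x\mapsto x^{-q}$ twice. With these local repairs your argument goes through in all cases, so this is a fixable slip rather than a fatal gap; indeed your route is arguably cleaner than the paper's, though the paper's choices additionally yield, in some subcases (e.g.\ $\PSL_3^\epsilon(q)$ with $p\nmid q+\epsilon$), the stronger extension conclusion of \prettyref{thm:mainLie}.
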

\begin{proof}
Let $r\in\{2, 3\}$.  Note that we omit the cases $\PSL_2(r)$, since $S$ is simple.  As before, we take $\chi_1=\mathrm{St}_S$, and show that there exists $\chi_2$ satisfying the desired properties.

First suppose that $q=r^{p^b}$ for some positive integer $b$.  Notice then that $p\nmid (q^2-1)$.  Indeed, otherwise we have $p|\Phi_{2p^c}(r)$ or $p|\Phi_{p^c}(r)$ for some nonnegative integer $c$, and hence by \cite[Lemma 5.2]{malle07}, we have $r$ has order $1, 2, $ or a power of $p$ modulo $p$.  Since the latter is impossible, it follows that $p\mid (r^2-1)$, which is impossible since $p>3$ and $r\leq 3$.  If $S=\PSL^\epsilon_3(q)$, we may take $\chi_2$ to be the unipotent character of degree $q^2+\epsilon q$, and in fact the conclusion of \prettyref{thm:mainLie} holds in this case using  \cite[Theorems 2.4 \& 2.5]{Malle08}.   In the case $S=\PSL_2(q)$ and $r=3$, we have $q\equiv -1\pmod 4$, and we may take $\chi_2$ to be a character of degree $(q-1)/2$, which is fixed by the field automorphisms.   In the case $r=2$ and $S\cong \PSL_2(q)$, let $s\in \wt{G}^\ast$ have eigenvalues $\{\delta, \delta^{-1}\}$ where $|\delta|=3$.  Then the corresponding semisimple character of $\wt{G}$ restricts to $\SL_2(q)\cong \PSL_2(q)$ irreducibly, is fixed by field automorphisms, and has degree $q\pm1$.  Letting $\chi_2$ be the corresponding irreducible character of $S$, we are done in this case.

So we may assume $q=r^a$ where $r\in\{2,3\}$, and $a=p^bm$ with $m>1$ and $(m,p)=1$.   Let $S=\PSL^\epsilon_3(q)$.  If $p\nmid(q+\epsilon)$, we may again take $\chi_2$ to be the unipotent character of degree $q^2+\epsilon q$, and the conclusion of \prettyref{thm:mainLie} holds.  So assume that $p\mid(q+\epsilon)$.  Arguing similarly to above, we see that this excludes the case $(r,m)=(2,2)$ if $\epsilon=-1$, so we may further assume $(r,m)\neq (2,2)$ if $\epsilon=-1$.  Taking $s\in\wt{G}^\ast$ to have eigenvalues $\{\delta, \delta^{-1}, 1\}$ with $|\delta|=r^m+\epsilon$, we may argue as before to obtain a character $\chi_2$ of $S$ with degree $(q-\epsilon)(q^2+\epsilon q+1)$, which is not divisible by $p$, that is invariant under every $p$-element of $\aut S$. 

Now let $S=\PSL_2(q)$.  Then taking $s_i\in\wt{G}^\ast$ for $i=1,2$ to have eigenvalues $\{\delta_i, \delta_i^{-1}\}$, with $|\delta_1|=r^m-1$ and $|\delta_2|=r^m+1$, we obtain characters with degree $(q+1)$ and $(q-1)$ of $S$ that are invariant under $p$-elements of $\aut S$. Since $p>3$ cannot divide both of these character degrees, we may make an appropriate choice for $\chi_2$, which completes the proof.
\end{proof}

\begin{lemma}\label{lem:suz}
Let $S$ be a simple Suzuki group $\tw{2}B_2(q^2)$ with $q^2=2^{2n+1}$ and let $p>3$ be a prime dividing $q^2-1$.  Then the conclusion of \prettyref{thm:mainLieweak} holds for $S$ and $p$.  %Then there exist $\chi_1, \chi_2\in\irrp S$ such that $\chi_1$ extends to $\aut S$, $\chi_2$ is invariant under every $p$-subgroup of $\aut S$, and $\chi_2(1)\nmid\chi_1(1)$.
\end{lemma}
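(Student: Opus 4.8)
The plan is to take $\chi_1 = \mathrm{St}_S$, the Steinberg character, and to build $\chi_2$ as a suitable semisimple character. Write $Q := q^2 = 2^{2n+1}$ for the field size, so that $\mathrm{St}_S$ has degree $Q^2 = |S|_2$; it extends to $\aut S$ by \cite[Theorem 2.4]{Malle08} and has $p'$-degree since $p$ is odd. Recall that $\aut S = S\rtimes\langle\phi\rangle$, where $\phi$ is the field automorphism generating the cyclic group $\out S$ of order $2n+1$. Since the action of $\aut S$ on $\irr S$ factors through $\out S$, a character of $S$ is fixed by every $p$-subgroup of $\aut S$ if and only if it is fixed by the unique Sylow $p$-subgroup of $\out S$, which is generated by $\psi := \phi^{(2n+1)/p^c}$, where $p^c$ denotes the $p$-part of $2n+1$. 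Hence it remains to produce a $\psi$-invariant $\chi_2 \in \irrp S$ with $\chi_2(1)\nmid Q^2$.

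I would next note that among the irreducible character degrees of $\tw{2}B_2(Q)$ every nontrivial degree other than $Q^2$ and $Q^2+1$ is a multiple of $Q-1$, and hence of $p$; so $\chi_2$ must have degree $Q^2+1$. Such characters are exactly the semisimple characters $\wt\chi_s$ attached to the nontrivial semisimple classes of the cyclic maximal torus $C_{Q-1}$ of $G^\ast \cong G = S$ (using the construction reviewed at the start of \prettyref{sec:strategy}, now with the role of the defining prime played by $2$). Since $Q-1$ is odd, every nontrivial $s \in C_{Q-1}$ is regular, so $C_G(s) = C_{Q-1}$ and $\wt\chi_s(1) = [G : C_{Q-1}]_{2'} = Q^2+1$; moreover $\wt G = G$ and $Z(G)=1$, so $\wt\chi_s$ is directly an irreducible character of $S$. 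By \cite[Corollary 2.4]{NavarroTiepTurullCyclo}, $\psi$ sends $\wt\chi_s$ to $\wt\chi_{\psi^\ast(s)}$, where $\psi^\ast$ acts on $C_{Q-1}\cong\F_Q^\times$ by $s\mapsto s^{Q_0}$ with $Q_0 := 2^{(2n+1)/p^c}$. Thus it suffices to find a nontrivial $s \in \F_{Q_0}^\times \le \F_Q^\times$, i.e. a nontrivial $\psi$-fixed element.

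The crux of the argument, and the step I expect to be the main obstacle, is guaranteeing that such an $s$ exists, equivalently that $Q_0 > 2$, i.e. that $\out S$ is not itself a $p$-group. This is exactly where the hypothesis $p \mid Q-1$ is used. Let $d := \mathrm{ord}_p(2)$. Then $d \mid 2n+1$ (as $p \mid 2^{2n+1}-1$) and $d \mid p-1$ by Fermat's little theorem; since $p>3$ we cannot have $d=1$, so $d \ge 2$. As $d \mid p-1$ is coprime to $p$, from $d \mid 2n+1 = p^c\cdot\tfrac{2n+1}{p^c}$ we get $d \mid \tfrac{2n+1}{p^c}$, whence $\tfrac{2n+1}{p^c} \ge d \ge 2$ and $Q_0 = 2^{(2n+1)/p^c} \ge 4$. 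In particular the degenerate possibility $2n+1 = p^c$ never occurs, precisely because $\mathrm{ord}_p(2)=1$ is impossible.

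Finally I would conclude by choosing any nontrivial $s \in \F_{Q_0}^\times \le C_{Q-1}$, which exists since $Q_0-1 \ge 3$, and setting $\chi_2 := \wt\chi_s$. Then $\psi^\ast$ fixes $s$, so $\chi_2$ is $\psi$-invariant, hence invariant under the Sylow $p$-subgroup of $\out S$ and therefore under every $p$-subgroup of $\aut S$. Its degree $Q^2+1$ is coprime to $p$, since $\gcd(Q-1, Q^2+1) = \gcd(Q-1, 2) = 1$, and it does not divide $\chi_1(1) = Q^2$. Thus $\chi_1 = \mathrm{St}_S$ and $\chi_2$ are the two characters required by \prettyref{thm:mainLieweak}; note that conditions (1) and (2) of \prettyref{sec:strategy} hold automatically here because $Z(\wt G^\ast)=1$, so no further irreducibility or triviality-on-center checks are needed.
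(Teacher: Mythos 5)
Your proposal is correct and is essentially the paper's own proof: the paper likewise takes $\chi_1=\mathrm{St}_S$ and for $\chi_2$ a character of degree $q^4+1=Q^2+1$ (the CHEVIE character $\chi_5(s)$, which is exactly your semisimple character attached to an element of the torus $C_{Q-1}$) parametrized by an element of order dividing $2^m-1$, where $m$ is the $p'$-part of $2n+1$, so that it is fixed by the field automorphisms of $p$-power order. Your direct computation with $d=\mathrm{ord}_p(2)$ showing $m\geq 2$ is the same number-theoretic point the paper makes by invoking the cyclotomic-polynomial argument of Lemma \ref{lem:SL2323} (via \cite[Lemma 5.2]{malle07}), just carried out explicitly.
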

\begin{proof}
As before, we may take $\chi_1$ to be the Steinberg character.  Now, since $\aut S/S$ is cyclic of size $2n+1$, generated by field automorphisms, it suffices to exhibit a character $\chi_2$ with degree coprime to $p$ that is invariant under field automorphisms of $p$-power order.  Let $2n+1=p^bm$ with $(m,p)=1$.  Arguing as in \prettyref{lem:SL2323}, we see $m>1$, since $p|(q^2-1)$.  Hence letting $s$ be such that $\gamma^s$ has order $2^m-1$, where $\gamma$ has order $q^2-1$, we may take $\chi_2$ to be the character $\chi_5(s)$ in CHEVIE notation.  Then $\chi_2$ has degree $q^4+1$ and is invariant under $p$-elements of $\aut S$.
%///need different consideration for  $\tw{2}B_2$, $p|(q^2-1)=2^{2n+1}-1$ - have to use $\chi_5(1)=q^4+1$ somehow in CHEVIE notation - values of interest $\zeta_2+\zeta_2^{-1}$ where $\zeta_2$ is $(q^2-1)$ root of 1. //////
\end{proof}

\prettyref{thm:mainLieweak} now follows by combining Lemmas \ref{lem:PSL23def}, \ref{lem:PSL23}, \ref{lem:SL2323}, and \ref{lem:suz} with \prettyref{thm:mainLie}, which completes the proofs of Theorems A-C.

%%%%%%%
\bibliographystyle{plain}
\bibliography{researchreferences}

\end{document}